\documentclass[11pt, a4paper, leqno]{amsart}
\usepackage{amsfonts, amssymb, amsmath}
\usepackage{amsthm}
\usepackage{a4wide}
\usepackage{enumerate}

\newtheorem{theorem}{Theorem}
\newtheorem{proposition}{Proposition}[section]
\newtheorem{lemma}[proposition]{Lemma}
\theoremstyle{definition}
\newtheorem{remark}[proposition]{Remark}
\numberwithin{equation}{section}

\newcommand{\N}{{\mathbb N}}
\newcommand{\R}{{\mathbb R}}
\newcommand{\eps}{{\varepsilon}}

\newcommand{\A}{{\mathcal A}}
\newcommand{\I}{{\mathcal I}}
\newcommand{\J}{{\mathcal J}}
\newcommand{\F}{{\mathcal F}}
\newcommand{\D}{{\mathcal D}}

\renewcommand{\H}{{H}}

\newcommand{\Norm}[1]{\lVert #1 \rVert}
\newcommand{\abs}[1]{\lvert #1 \rvert}
\newcommand{\weakto}{\rightharpoonup}

\newcommand{\beq}{\begin{equation}}
\newcommand{\eeq}{\end{equation}}

\begin{document}

\title[Nonlinear Schr\"odinger equations with fast decaying potentials]
{Semiclassical stationary states for nonlinear Schr\"odinger equations with fast decaying potentials}
\author{Vitaly Moroz}
\address{Swansea University\\ Department of Mathematics\\ Singleton Park\\
Swansea\\ SA2~8PP\\ Wales, United Kingdom}	
\email{V.Moroz@swansea.ac.uk}
\author{Jean Van Schaftingen}
\address{Universit\'e Catholique de Louvain\\ D\'epartement de Math\'ematique\\ Chemin du Cyclotron 2\\ 1348 Louvain-la-Neuve \\ Belgium}
\email{Jean.VanSchaftingen@uclouvain.be}
\keywords{Nonlinear Schr\"odinger equation; semiclassical states; compactly supported potential; mountain-pass lemma; penalization; Hardy inequality}
\subjclass{35J65 (35B05, 35B25, 35B40, 35J20, 35Q55)}

\date{\today}

\begin{abstract}
We study the existence of positive solutions for a class of nonlinear Schr\"odinger equations of the type
\[-\eps^2\Delta u + Vu = u^p \quad\text{in $\R^N$}, \]
where $N\ge 3$, $p>1$ is subcritical and $V$ is a nonnegative continuous potential.
Amongst other results, we prove that if $V$ has a positive local minimum, and $\frac{N}{N-2}<p<\frac{N+2}{N-2}$,
then for small $\eps$ the problem admits positive solutions
which concentrate as $\eps\to 0$ around the local minimum point of $V$.
The novelty is that no restriction is imposed on the rate of decay of $V$.
In particular, we cover the case where $V$ is compactly supported.
\end{abstract}

\maketitle

\section{Introduction}

We study the existence of positive solutions for a class of nonlinear Schr\"odinger equations which includes,
in particular, equations of the type
\begin{equation}\label{0}
-\eps^2\Delta u + Vu = u^p \quad\text{in $\R^N$},
\end{equation}
where $p>1$, $\eps>  0$ and $V\in C(\R^N, \R^+)$ is a nonnegative potential.
Solutions of this equation are stationary states of the nonlinear Schr\"odinger equations.
The parameter $\eps$ is the adimensionalised Planck constant;
one expects to recover classical physics when $\eps$ goes to $0$.
This \emph{r\'egime} is referred to as the semiclassical limit.
Equation \eqref{0} also models the formation of spike layers in cross-diffusion \cite{WMNi}.

First results go back to Floer and Weinstein \cite{Floer}, who considered the case $N=1$ and $p=3$.
Using a Lyapunov--Schmidt reduction method, they proved that if $V$ is bounded and has a positive global nondegenerate minimum,
then for $\eps$ small enough there is a family of solutions that concentrate around the minimum point.
Oh \cite{Oh88, Oh90}, also using Lyapunov--Schmidt reduction techniques,
obtained multibump solutions, i.e.\ solutions concentrating around multiple nondegenerate critical points of $V$.
The use of variational methods was initiated by Rabinowitz \cite{R}, who proved the existence of a solution $u_\eps$ for small $\eps > 0$ under the assumption
\[ 0<\inf_{\R^N} V < \liminf_{\abs{x} \to \infty} V(x). \]
Wang Xuefeng investigated the concentration phenomenon \cite{Wa93}.
His results imply, in particular, that if solutions $u_\eps$ attain its global maximum at $x_\eps$,
then $\liminf_{\eps \to 0} u_\eps (x_\eps)>0$
and there exist $C, \lambda > 0$ such that
\[
 u_\eps(x) \le  C\exp\big(-\tfrac{\lambda}{\eps} \abs{x-x_\eps}\big).
\]
A variational method was subsequently devised by del Pino and Felmer \cite{dPF} in order to obtain solutions that concentrate around an arbitrary local minimum of $V$.

Throughout all these works, an assumption that $\inf_{\R^N} V> 0$ was made.
If $V \ge 0$ then $V$ may vanish at some points of the domain, or $V$ may vanish at infinity.
Byeon and Wang \cite{BW-I, BW-II} have studied solutions concentrating around zeros of $V$. A remarkable feature is that these solutions have specific different concentration behavior that depends on the behavior of $V$ near its zero.

The study of the case where $V>0$, but $\inf_{\R^N} V=0$ has been initiated by Ambrosetti, Felli and Malchiodi \cite{AFM}.
They have proved the existence of solutions to the related problem
\[ -\eps^2\Delta u+ Vu=Ku^p \quad\text{in $\R^N$},  \]
when $K\in C(\R^N, \R^+)$ is a nonnegative potential which decays fast enough.
Ambrosetti, Malchiodi and Ruiz \cite{AMR} have then proved, by Lyapunov--Schmidt reduction method,
the existence of solutions to \eqref{0} when $V$ satisfies the assumption
\[
  \liminf_{\abs{x} \to \infty} V(x)\abs{x}^2 > 0,
\]
which we call \emph{slow decay}.
Finally, Bonheure and Van Schaftingen \cite{BVS-CR, BVS} have proved the existence and concentration of solution to \eqref{0}
by the method of del Pino and Felmer in the case
\[
\liminf_{\abs{x} \to \infty} V(x)\abs{x}^{(N-2)(p-1)} > 0,
\]
which thus provides an improvement to the results in \cite{AMR} when $p > \frac{N}{N-2}$.

In this paper, we address the question of existence and concentration for
\emph{fast decaying potentials}, i.e.\ potentials for which
\[ \liminf_{\abs{x} \to\infty} V(x)\abs{x}^2 = 0. \]
One difference between equations with slow and fast decaying potentials is the
decay rate of solutions as $x\to\infty$. Similarly to equations with $\inf_{\R^N}V>0$,
positive solutions of \eqref{0} with slow decaying potentials such that
$\liminf_{\abs{x} \to\infty} V(x)\abs{x}^2 =\infty$ have an exponential decay at infinity.
Solutions of \eqref{0} with fast decaying $V$ may decay polynomially.
For instance, if
\begin{equation}\label{fast-delta}
\limsup_{\abs{x} \to\infty} V(x)\abs{x}^{2+\delta}=0,
\end{equation}
for some $\delta>0$, then positive solutions of \eqref{0} decay no faster then $\abs{x}^{-(N-2)}$,
as one can see by comparing with an explicit subsolution at infinity $\abs{x}^{-(N-2)}(1+\abs{x}^{-\delta})$ of $-\Delta+V$.
A consequence of such polynomial decay of solutions is a Liouville type nonexistence phenomena:
i.e., if \eqref{fast-delta} holds then equation \eqref{0}
\emph{has no positive solutions in the neighborhood
of infinity for $p\le\frac{N}{N-2}$}, cf.\ \cite{KLS}.
A special case of a fast decaying potential is a potential $V$ that vanishes identically.
In this case, the equation
$$-\Delta u = u^p\quad\text{in $\R^N$}$$
{\em has no positive solutions  for $p<\frac{N+2}{N-2}$}, see\ \cite{GS}.
The existence of positive solutions of \eqref{0} with fast decaying potentials
in the admissible range $\frac{N}{N-2}<p<\frac{N+2}{N-2}$ is thus a rather delicate issue.

A special case of our results in this paper is the following theorem,
which in particular, answers positively the question about the existence of solutions
for \eqref{0} with compactly supported potentials,
which was posed by Ambrosetti and Malchiodi \cite[Section 1.6.5]{AM}.

\begin{theorem}\label{T-0}
Let $N\ge 3$, $\frac{N}{N-2}<p<\frac{N+2}{N-2}$ and $V \in C(\R^N, \R^+)$ be a nonnegative potential.
If there exists
a smooth bounded open set $\Lambda\subset\R^N$ such that
\[
  0<\inf_{x\in\Lambda}V(x)<\inf_{x\in\partial\Lambda}V(x),
\]
then there exists $\eps_0>0$ such that for every $0<\eps<\eps_0$,
equation \eqref{0} has at least one positive solution $u_\eps$.
\end{theorem}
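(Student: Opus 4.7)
My plan is to adapt the penalization method of del Pino and Felmer, following the line of attack developed by Bonheure and Van Schaftingen for slowly decaying potentials, but replacing the absorbing role of $V$ far from the concentration region by that of the Hardy weight $\eps^2\abs{x-x_0}^{-2}$. The assumption $p>\frac{N}{N-2}$ is what makes this substitution possible: functions that decay polynomially like $\abs{x}^{-(N-2)}$, which is the natural decay rate in the fast decay régime, have a nonlinearity $u^p$ that is subcritical relative to the Hardy weight $\abs{x}^{-2}$, since $(N-2)(p-1)>2$.

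Fix $x_0\in\Lambda$ realizing the minimum of $V$ on $\Lambda$ and a constant $0<\kappa<(N-2)^2/4$. Define a penalized nonlinearity $g_\eps(x,s)$ by $g_\eps(x,s)=s_+^p$ for $x\in\Lambda$ and $g_\eps(x,s)=\min\{s_+^p,\,\kappa\eps^2\abs{x-x_0}^{-2}s_+\}$ for $x\in\R^N\setminus\Lambda$, and let $G_\eps(x,s)=\int_0^s g_\eps(x,t)\,dt$. Consider the functional
$$J_\eps(u)=\int_{\R^N}\Bigl(\tfrac{\eps^2}{2}\abs{\nabla u}^2+\tfrac{1}{2}Vu^2\Bigr)\,dx-\int_{\R^N}G_\eps(x,u)\,dx$$
on $\H_\eps=\{u\in H^1(\R^N):\int_{\R^N}Vu^2<\infty\}$. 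The Hardy inequality shows that the penalized term outside $\Lambda$ consumes at most the fraction $4\kappa/(N-2)^2<1$ of the kinetic energy, so $J_\eps$ possesses mountain pass geometry, and the subcritical growth $p<\frac{N+2}{N-2}$ ensures the Palais--Smale condition at every positive level.

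The mountain pass theorem then produces a nontrivial critical point $u_\eps\ge 0$ of $J_\eps$. Using as test path a cut-off and $\eps$-rescaling of the unique positive ground state of the limit problem $-\Delta w+V(x_0)w=w^p$ on $\R^N$ centered at $x_0$, one obtains the sharp upper bound $c_\eps\le\eps^N c_*+o(\eps^N)$ on the mountain pass level, where $c_*$ is the mountain pass level of the limit problem; positivity follows from the strong maximum principle.

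The main obstacle, and the heart of the proof, is to verify that the penalization is inactive for small $\eps$, i.e.\ that $u_\eps(x)^{p-1}\le\kappa\eps^2\abs{x-x_0}^{-2}$ for every $x\in\R^N\setminus\Lambda$, which is precisely what makes $u_\eps$ an actual solution of \eqref{0}. I would proceed by a two-step tail estimate. First, concentration: the energy upper bound $c_\eps\le\eps^N c_*+o(\eps^N)$ together with standard concentration-compactness analysis shows that the rescaled family $v_\eps(y)=u_\eps(x_0+\eps y)$ converges in $H^1(\R^N)$ to the limit ground state; in particular $u_\eps\to 0$ uniformly on $\R^N\setminus\Lambda$. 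Second, a comparison argument with the explicit supersolution $C\eps^{N-2}\abs{x-x_0}^{-(N-2)}$ of $-\eps^2\Delta+V$ outside a small neighborhood of $x_0$ (valid once $u_\eps$ is small enough there) yields the polynomial pointwise tail bound $u_\eps(x)\le C\eps^{N-2}\abs{x-x_0}^{-(N-2)}$ on $\R^N\setminus\Lambda$. Since $(N-2)(p-1)>2$, raising to the power $p-1$ gives the required estimate for all sufficiently small $\eps$, so the penalization vanishes and $u_\eps$ solves \eqref{0}.
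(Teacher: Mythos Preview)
Your overall architecture matches the paper's, but the comparison step has a real gap. Outside $\Lambda$, the penalized solution satisfies only
\[
-\eps^2\Delta u_\eps + Vu_\eps \le \kappa\eps^2\abs{x-x_0}^{-2}u_\eps,
\]
so to bound $u_\eps$ from above you need a \emph{supersolution of $-\eps^2\Delta + V - \kappa\eps^2\abs{x-x_0}^{-2}$}, not of $-\eps^2\Delta + V$. Your candidate $W(x)=\abs{x-x_0}^{-(N-2)}$ is harmonic, hence
\[
-\eps^2\Delta W + \bigl(V-\kappa\eps^2\abs{x-x_0}^{-2}\bigr)W
= \bigl(V(x)-\kappa\eps^2\abs{x-x_0}^{-2}\bigr)\abs{x-x_0}^{-(N-2)},
\]
which is strictly negative wherever $V$ vanishes --- in particular on the whole complement of $\mathrm{supp}\,V$ in the compactly supported case that is the point of the theorem. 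The comparison therefore fails precisely in the fast-decay r\'egime you are trying to cover; ``$u_\eps$ small enough'' does not help here, since the obstruction comes from the linear penalization term, not from the nonlinearity.

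This is exactly the difficulty the paper isolates and resolves. It weakens the penalization to the sub-Hardy potential
\[
H(x)=\frac{\kappa(1-\chi_\Lambda(x))}{\abs{x}^{2}\bigl(\log\tfrac{\abs{x}}{\rho_0}\bigr)^{1+\beta}}.
\]
The logarithmic damping is what allows $-\Delta - H$ to admit a positive supersolution in $\Lambda^c$ that still decays like $\abs{x}^{-(N-2)}$ (Lemma~\ref{lemmaMinimalSolution}); with the pure Hardy weight $\kappa\abs{x}^{-2}$ the minimal positive solution would decay only like $\abs{x}^{-\alpha_+}$ with $\alpha_+=\tfrac{1}{2}\bigl(N-2+\sqrt{(N-2)^2-4\kappa}\bigr)<N-2$. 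The paper then builds composite barriers (Lemma~\ref{l-barrier}) by gluing an exponential profile $\cosh\bigl(\mu(r-\abs{x-x_\eps})/\eps\bigr)$ inside $\Lambda$ --- where $V$ is bounded below --- onto this minimal solution outside $\Lambda$, producing the bound $u_\eps(x)\le Ce^{-\lambda/\eps}\abs{x}^{-(N-2)}$ on $\Lambda^c$. The exponential-in-$1/\eps$ prefactor (much stronger than your $\eps^{N-2}$) then absorbs both the factor $\eps^2$ and the extra $(\log\abs{x})^{1+\beta}$ in the final verification $K(x)u_\eps(x)^{p-1}\le \eps^2 H(x)$. Your argument could be repaired along these lines, or alternatively by taking $\kappa$ small enough that $\alpha_+(p-1)>2$ and using $\abs{x-x_0}^{-\alpha_+}$ as barrier; but as written the supersolution claim is false.
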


As a byproduct of our method, we obtain results about the concentration of solutions.
A central issue in this analysis is that, if $(u_\eps)_{\eps>0}$ is a family solutions of \eqref{0} that
concentrates to a point $x_0\in\R^N$, then
\[v_\eps(x):=u_\eps\left(x_0+\eps x\right)\]
solves the rescaled equation
\[-\Delta v_\eps+V\left(x_0+\eps x\right)v_\eps=v_\eps^p \quad\text{in $\R^N$}.\]
This suggests that $v_\eps$ should converge, in a certain sense, to a positive solution
of the limiting equation
\[-\Delta v+V\left({x_0}\right)v=v^p \quad\text{in $\R^N$}, \]
where $V(x_0)>0$.
It is known that such a solution $v$ decays exponentially as $\abs{x}\to\infty$.
On the other hand, if $V$ satisfies \eqref{fast-delta} then $v_\eps$ decays no faster then $\abs{x}^{-(N-2)}$.
Concentration results for \eqref{0} should thus capture a transition between
polynomial decay of the concentrating solutions $u_\eps$ and exponential decay of the limiting solution $v$.
Actually, we show that $u_\eps$ decays polynomially in $x$ and
exponentially in $\eps$. More precisely, we prove the following.

\begin{theorem}
Let $(u_\eps)$ be the family of solutions of \eqref{0}, constructed in Theorem~\ref{T-0}.
Then  for all sufficiently small $\eps>0$
there is $x_\eps\in\Lambda$ such that $u_\eps$ attains its maximum at $x_\eps$,
\begin{gather*}
  \liminf_{\eps \to 0} u_\eps(x_\eps) > 0, \\
  \lim_{\eps\to 0}V(x_\eps)=\inf_{x\in\Lambda}V(x),
\end{gather*}
and there exists $C, \lambda>0$ such that
\[
  u_\eps(x) \le C\exp\Big(-\frac{\lambda}{\eps}\frac{|x-x_\eps|}{1+|x-x_\eps|}\Big)\, \big(1+|x-x_\eps|^2\big)^{-\frac{N-2}{2}}.
\]
\end{theorem}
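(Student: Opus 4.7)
The plan is to combine a blow-up/concentration analysis of the rescaled equation with a tailored supersolution comparison that captures both decay scales at once. One exploits that $u_\eps$ is produced in Theorem~\ref{T-0} as a mountain-pass critical point of a functional whose nonlinearity has been penalized outside $\Lambda$, so that $u_\eps$ is uniformly small outside $\Lambda$. Standard elliptic regularity together with the penalization force any maximum point $x_\eps$ of $u_\eps$ to lie in $\Lambda$ once $\eps$ is small, and along a subsequence $x_\eps\to x_0\in\overline{\Lambda}$. Rescaling $v_\eps(y):=u_\eps(x_\eps+\eps y)$ gives $-\Delta v_\eps+V(x_\eps+\eps y)v_\eps=v_\eps^p$ on larger and larger balls; uniform $L^\infty$-bounds and elliptic regularity then yield $C^{1,\alpha}_{\mathrm{loc}}$-convergence along a subsequence to a nonnegative solution $v$ of $-\Delta v+V(x_0)v=v^p$ on $\R^N$.

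To identify $x_0$, I compare the critical level $c_\eps$ with the ground-state energy $\mathcal E(\mu)$ of the limit problem $-\Delta v+\mu v=v^p$. The penalized mountain-pass construction gives $\eps^{-N}c_\eps\le \mathcal E(\inf_\Lambda V)+o(1)$, by testing with a translate of the limit ground state centred at an interior minimiser of $V$, while Fatou's lemma applied to the rescaled energy yields $\eps^{-N}c_\eps\ge \mathcal E(V(x_0))+o(1)$. The scaling identity $\mathcal E(\mu)=\mu^{\alpha}\mathcal E(1)$ with $\alpha=\frac{2}{p-1}+1-\frac{N}{2}>0$ in the subcritical range shows $\mathcal E$ is strictly increasing, so $V(x_0)\le\inf_\Lambda V$; combined with $V>\inf_\Lambda V$ on $\partial\Lambda$, this forces $x_0\in\Lambda$ and $V(x_0)=\inf_\Lambda V>0$. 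A Liouville argument rules out $v_\eps(0)\to 0$: dividing $v_\eps$ by its maximum would yield in the limit a bounded nontrivial solution of $-\Delta w+V(x_0)w=0$ with $V(x_0)>0$, contradicting the maximum principle. Hence $\liminf_{\eps\to 0}u_\eps(x_\eps)>0$.

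For the pointwise decay estimate I construct the comparison function
\[
W_\eps(x):=K\exp\!\Bigl(-\tfrac{\lambda}{\eps}\tfrac{\abs{x-x_\eps}}{1+\abs{x-x_\eps}}\Bigr)\bigl(1+\abs{x-x_\eps}^2\bigr)^{-(N-2)/2},
\]
and show that, for $\lambda>0$ small and $K$ large, $W_\eps$ is a supersolution of the linear inequality $-\eps^2\Delta+V-u_\eps^{p-1}\ge 0$ outside a small ball $B_{R\eps}(x_\eps)$, with $W_\eps\ge u_\eps$ on the boundary. The polynomial factor $\Gamma(z):=(1+\abs{z}^2)^{-(N-2)/2}$ satisfies the Aubin--Talenti identity $-\Delta\Gamma=N(N-2)\Gamma^{(N+2)/(N-2)}$, providing the positive Hardy-type contribution $-\eps^2\Delta\Gamma/\Gamma=\eps^2 N(N-2)(1+\abs{z}^2)^{-2}$; the exponential factor with saturating phase $\phi(z):=\abs{z}/(1+\abs{z})$ delivers the semiclassical decay at scales $\abs{z}\lesssim 1$ and stabilises harmlessly at infinity. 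The remaining nonlinear term $u_\eps^{p-1}$ is exponentially small outside $B_{R\eps}(x_\eps)$ by the concentration analysis of the first part, and is absorbed by the Hardy term through the Hardy inequality flagged in the keywords. The maximum principle then gives $u_\eps\le W_\eps$ in the exterior, while the interior bound is immediate from the uniform $L^\infty$-estimate. The main obstacle is engineering $W_\eps$ so that the two decay regimes mesh: $\phi$ is deliberately saturated at infinity so that only $\Gamma$ controls the profile beyond the effective support of $V$, where no exponential decay can hold in view of \eqref{fast-delta}, while the same barrier reproduces the sharp semiclassical rate $\exp(-\lambda\abs{x-x_\eps}/\eps)$ at unit scale; verifying the supersolution inequality uniformly across this crossover, through the interplay of the Hardy potential $(1+\abs{z}^2)^{-1}$ and the exponentially small nonlinear contribution, is the technical heart of the argument.
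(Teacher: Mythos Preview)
Your concentration analysis in the first two paragraphs is broadly along the lines of the paper (upper and lower bounds on the rescaled mountain-pass level, monotonicity of the ground-state energy in the mass parameter, a Liouville argument to exclude vanishing). The serious difficulty is in the barrier.

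Your single-formula supersolution $W_\eps=e^{-(\lambda/\eps)\phi(\,\cdot\,-x_\eps)}\,\Gamma(\,\cdot\,-x_\eps)$ does \emph{not} satisfy the linear inequation you need in the far field. Writing $r=\abs{x-x_\eps}$ and expanding
\[
\frac{-\Delta W_\eps}{W_\eps}
=\frac{-\Delta\Gamma}{\Gamma}+2\nabla\psi\cdot\frac{\nabla\Gamma}{\Gamma}+\Delta\psi-\abs{\nabla\psi}^2,
\qquad \psi=\tfrac{\lambda}{\eps}\phi,
\]
one finds, for large $r$, that $-\Delta\Gamma/\Gamma\sim N(N-2)r^{-4}$, while the cross term is
\(
2\nabla\psi\cdot\nabla\Gamma/\Gamma\sim -2(N-2)\tfrac{\lambda}{\eps}\,r^{-3},
\)
and $\Delta\psi\sim (N-3)\tfrac{\lambda}{\eps}\,r^{-3}$. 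The net $r^{-3}$ coefficient is $-(N-1)\tfrac{\lambda}{\eps}<0$, so $-\Delta W_\eps<0$ for $r$ large, uniformly for every choice of $\lambda>0$. Hence $W_\eps$ is not a supersolution of $-\eps^2\Delta-\eps^2 H$ (nor of $-\eps^2\Delta+V-u_\eps^{p-1}$ when $V$ is compactly supported) outside a bounded set, and the comparison cannot close. The ``Hardy-type contribution'' $N(N-2)(1+r^2)^{-2}$ you rely on is too weak to compensate the cross term between $\nabla\phi$ and $\nabla\Gamma$.

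The paper avoids precisely this obstruction by \emph{not} multiplying the two profiles. It builds the barrier piecewise: a $\cosh$ profile on a ball $B(x_\eps,r)$ of \emph{fixed} radius (providing the semiclassical rate), glued in $C^{1,1}$ to the minimal positive solution $w$ of $-\Delta w-Hw=0$ in $\Lambda^c$ (which decays like $\abs{x}^{-(N-2)}$ by a separate supersolution argument). Outside $B(x_\eps,r)$ the exponential factor is frozen, so no cross term arises; the comparison principle for $-\Delta-H$, based on the Hardy inequality, then yields the bound. If you want to repair your argument, you should likewise decouple the two regimes rather than seek a single global product.
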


In particular, when $V$ is compactly supported, solutions $(u_\eps)$ solve the equation
\begin{equation}\label{1}
-\Delta u = u^p
\end{equation}
in an exterior domain.
Remind that, according to \cite[Theorem 3.6]{GS}, solutions to \eqref{1} in exterior domains for
$\frac{N}{N-2} < p < \frac{N+2}{N-2}$ decay at infinity either as $\abs{x}^{-2/(p-1)}$,
or as $\abs{x}^{-(N-2)}$. The family of solutions $(u_\eps)$, constructed in Theorem~\ref{T-0}, belongs to the former class.
Note also that if $V$ is compactly supported then the solutions $u_\eps$ belong to $L^2(\R^N)$ only when $N\ge 5$,
while for $N=3, 4$ we have $u_\eps \in L^{\frac{N+2}{N-2}}(\R^N)$.

Our approach in this work follows the variational penalization scheme introduced in \cite{dPF} and adapted to decaying potentials in \cite{BVS-CR, BVS}. Formally, equation \eqref{0} is the Euler--Lagrange equation of the functional
\[
  \I_\eps(u):=\frac{1}{2}\int_{\R^N}\left(\eps^2|\nabla u|^2+V|u|^2\right)-
\frac{1}{p+1}\int_{\R^N}|u|^{p+1}.
\]
The first integral defines a natural Sobolev space. However, when
$1 < p < \frac{N+2}{N-2}$, the second integral need not be finite in this space, and one has thus $\I_\eps(u) \in \R \cup \{-\infty\}$.
This difficulty can be overridden by following the method devised by del Pino and Felmer. They modified the problem for large $u$ and $x$ so that the modified problem becomes well-posed and solvable \cite{dPF}.
One has then to show that solutions of the modified problem are small enough for large $x$, so that they solve the original problem.

In particular, in order to tackle problems with decaying potentials, in \cite{BVS-CR, BVS} the penalized problem
\[
  -\eps^2 \Delta u_\eps+Vu_\eps= \chi_\Lambda u_\eps^{p-1} + \chi_{\Lambda^c} \min (\kappa V, u^{p}_\eps)u_\eps,
\]
where $0<\kappa<1$, was considered.
One has then to show that $u_\eps \le \kappa V$ outside $\Lambda$. When $V$ is compactly supported this approach fails, because one should then have that the solution are compactly supported, which cannot be the case. Our key observation in this paper is that, in order to overcome this difficulty, $V$ can be replaced in the definition of the penalization by a Hardy-type potential $H$, chosen independently of the decay of $V$.
We also improve the barriers used in \cite{BVS} to obtain the decay of $u_\eps$.

The paper is organized as follows. In Section~\ref{sectAssumptions} we give the precise assumptions and results of this paper. The three next sections are devoted to the proof of these results: the penalization problem is introduced in Section~\ref{s-Penalization}, the asymptotics of its solutions are studied in Section~\ref{s-Energy}, and in Section~\ref{s-Barriers}, the proof is completed by obtaining the decay of the solutions, and proving that solutions of the penalized problem solve the original problem. Finally, Section~\ref{s-Ext} discusses various extensions to two-dimensional problems, problems on domains and more general nonlinearities as well as improvements of some results in \cite{BVS}.

\section{Assumptions and the main result}
\label{sectAssumptions}

\subsection{Assumptions}
We consider a slightly more general equation than \eqref{0}, i.e.,
\[
\label{Peps}
  -\eps^2\Delta u + Vu = Ku^p \quad\text{in $\R^N$}, \tag{$\mathcal{P}_\eps$}
\]
where $N\ge 3$, $p>1$, $\eps>0$ and $V, K\in C(\R^N, \R^+)$ are nonnegative potentials.
The existence of solutions will be related to the presence of local minimizers of the concentration function
\[
\A(x):=\frac{V(x)^{\frac{p+1}{p-1}-\frac{N}{2}}}{K(x) ^{\frac{2}{p-1}}}.
\]
The linear part of the equation induces the norm
\[
\|u\|_\eps^2:=\int_{\R^N}\left(\eps^2|\nabla u|^2+V|u|^2\right)
\]
and the weighted Sobolev space
\[
  \D^1_V(\R^N):=\left\{u\in\D^1_0(\R^N)\, \big|\, \|u\|_\eps<+\infty\right\}.
\]
Here $\D^1_0(\R^N)$ is the closure of $C^\infty_c(\R^N)$ with respect to the $L^2$-norm of the gradient.
The space $\D^1_V(\R^N)$ endowed with the norm $\|\cdot\|_\eps$ is a Hilbert space.
Note that the set $\D^1_V(\R^N)$ does not depend on $\eps>0$.
If $V$ is compactly supported then $\|\cdot\|_\eps$
simply defines an equivalent norm on $\D^1_0(\R^N)$, while for general bounded nonnegative potentials $V$
one always has the embeddings
\[H^1(\R^N)\subseteq\D^1_V(\R^N)\subseteq\D^1_0(\R^N).\]

\subsection{Main result}
Our main result reads as follows.

\begin{theorem}\label{T-main}
Let $N\ge 3$, $1<p<\frac{N+2}{N-2}$ and let $V, K\in C(\R^N, \R^+)$. Assume that
there exists $\sigma<(N-2)p-N$ and $M >0$ such that
\[
\label{a-K}
\tag{$K$}0\le K(x)\le M(1+\abs{x})^{\sigma}\quad\text{for all $x\in\R^N$},
\]
and that there exists a smooth bounded open set $\Lambda\subset\R^N$ such that
\[
\tag{$\mathcal{A}$} 0<\inf_{x\in\Lambda}\A(x)<\inf_{x\in\partial\Lambda}\A(x).
\]
Then there exists $\eps_0>0$ such that for every $\eps\in(0, \eps_0)$,
equation \eqref{Peps} has at least one positive solution $u_\eps\in \D^1_V(\R^N)\cap C^1(\R^N)$.
Moreover,
\[\|u_\eps\|_\eps=O(\eps^{N/2})\quad\text{ as $\:\eps\to 0$}, \]
$u_\eps$ attains its maximum at $x_\eps \in \Lambda$,
\begin{gather*}
  \liminf_{\eps \to 0} u_\eps(x_\eps) > 0, \\
  \lim_{\eps\to 0}\mathcal{A}(x_\eps)=\inf_{x\in\Lambda}\mathcal{A}(x),
\end{gather*}
and there exists $C, \lambda > 0$ such that
\begin{equation}\label{fast}
u_\eps(x) \le C\exp\Big(-\frac{\lambda}{\eps}\frac{|x-x_\eps|}{1+|x-x_\eps|}\Big)\, \big(1+|x-x_\eps|^2\big)^{-\frac{N-2}{2}}.
\end{equation}
\end{theorem}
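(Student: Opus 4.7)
The plan is to follow the variational penalization scheme of del Pino--Felmer \cite{dPF}, as adapted to decaying potentials in \cite{BVS-CR, BVS}, but with the crucial modification flagged in the introduction: the penalizing weight will be a Hardy-type potential $H(x) \sim |x|^{-2}$ rather than $V$ itself. The argument naturally splits into four stages, matching the structure promised in Sections~\ref{s-Penalization}--\ref{s-Barriers}.

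\emph{Stage 1: Penalized problem and existence.} First I would fix a Hardy-type potential $H$ such that the Hardy inequality $c_H \int_{\R^N} u^2 H \le \int_{\R^N} |\nabla u|^2$ holds on $\D^1_0(\R^N)$, and pick a constant $\kappa \in (0, c_H)$. I would then define the penalized nonlinearity
\[
g_\eps(x,u) := \chi_\Lambda(x)\, K(x)\, u_+^p \; +\; \chi_{\Lambda^c}(x)\, \min\!\bigl(\kappa\, \eps^2 H(x)\, u_+,\, K(x)\, u_+^p\bigr),
\]
and consider $-\eps^2 \Delta u + Vu = g_\eps(x,u)$. Since the term outside $\Lambda$ is dominated by $\kappa \eps^2 H u$, the associated functional $\I_\eps^{\mathrm{pen}}$ is well-defined and coercive on $\D^1_V(\R^N)$ thanks to the Hardy inequality (the bad term is absorbed into the kinetic energy with a margin $1-\kappa/c_H>0$). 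Combined with the subcritical growth inside $\Lambda$, a standard mountain-pass argument together with a Palais--Smale analysis yields a positive critical point $u_\eps \in \D^1_V \cap C^1$.

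\emph{Stage 2: Energy estimate and localization of mass.} To get the upper bound $\|u_\eps\|_\eps = O(\eps^{N/2})$ and identify the concentration point, I would test the mountain-pass level $c_\eps$ against a trial function built from a rescaled and cut-off ground state $w_{x_0}$ of the limit problem $-\Delta w + V(x_0) w = K(x_0) w^p$ centered at a near-minimum $x_0$ of $\A$ in $\Lambda$. This gives $c_\eps \le \eps^N(\A(x_0) E_* + o(1))$, where $E_*$ is the energy of the normalized ground state. The matching lower bound, obtained by rescaling $u_\eps$ around its concentration point $x_\eps$ and passing to a weak limit, forces $\A(x_\eps) \to \inf_\Lambda \A$. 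The assumption $\inf_\Lambda \A < \inf_{\partial \Lambda} \A$ then forces $x_\eps$ to stay in the interior of $\Lambda$ for small $\eps$, and the rescaled profile converges to a nontrivial ground state, giving $\liminf u_\eps(x_\eps) > 0$.

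\emph{Stage 3: Barriers and pointwise decay.} This is the most delicate part, and where the new Hardy-based penalization pays off. I would construct a supersolution of the linearized penalized equation of the form
\[
\Phi_\eps(x) := C \exp\!\Bigl(-\tfrac{\lambda}{\eps}\tfrac{|x-x_\eps|}{1+|x-x_\eps|}\Bigr)\,\bigl(1+|x-x_\eps|^2\bigr)^{-(N-2)/2},
\]
designed so that the exponential factor dominates on the scale $\eps$ (as in the standard theory with positive $\inf V$), while the polynomial factor $|x|^{-(N-2)}$ dominates at infinity, matching the fundamental solution of $-\Delta$ that governs the behavior when $V$ vanishes. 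A direct computation shows $-\eps^2 \Delta \Phi_\eps + V\Phi_\eps \ge \kappa \eps^2 H \Phi_\eps$ away from $x_\eps$, once $\lambda$ is chosen appropriately. Matching $\Phi_\eps$ to $u_\eps$ on $\partial \Lambda$ using the Stage~2 uniform bounds and applying the comparison principle outside a small ball containing $x_\eps$ yields the pointwise estimate~\eqref{fast}.

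\emph{Stage 4: Removing the penalization.} Once \eqref{fast} is established, for $\eps$ small enough one has $K u_\eps^{p-1} \le \kappa \eps^2 H$ on $\Lambda^c$ (this is where the growth hypothesis~\eqref{a-K} on $K$ with $\sigma < (N-2)p - N$ enters: it makes $K(x) |x|^{-(N-2)(p-1)} = o(|x|^{-2})$ at infinity). Therefore $g_\eps(x, u_\eps) = K u_\eps^p$ everywhere, so $u_\eps$ solves the original equation \eqref{Peps}. The main obstacle is Stage~3: the barrier must simultaneously capture two very different decay regimes (exponential on the microscopic scale $\eps$, polynomial at infinity) and be strong enough to drive the penalization test in Stage~4, which is why the crossover function $|x-x_\eps|/(1+|x-x_\eps|)$ inside the exponential, together with the exact Newtonian power $(N-2)/2$, are essential.
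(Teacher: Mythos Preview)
Your overall four-stage strategy matches the paper exactly, and Stages~1, 2, and 4 are fine. The gap is in Stage~3: the explicit function $\Phi_\eps$ you write down is \emph{not} a supersolution of the penalized linear operator in the region where $V$ may vanish, so the ``direct computation'' you invoke cannot succeed.

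Concretely, for large $r=\abs{x-x_\eps}$ the exponential factor in $\Phi_\eps$ is essentially the constant $e^{-\lambda/\eps}$, so the governing factor at infinity is $w(r)=(1+r^2)^{-(N-2)/2}$. One computes $-\Delta w = N(N-2)(1+r^2)^{-(N+2)/2}$, so that $-\Delta w/w \sim N(N-2)\,r^{-4}$. On the other hand any useful Hardy-type weight satisfies $H(x)\gtrsim \abs{x}^{-2}(\log\abs{x})^{-(1+\beta)}$ (otherwise Stage~4 would fail), and this is $\gg r^{-4}$. Hence $(-\Delta - H)\Phi_\eps < 0$ for large $r$ whenever $V$ vanishes there, and the comparison principle cannot be applied with $\Phi_\eps$ as barrier. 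Equivalently: $\abs{x}^{-(N-2)}$ is harmonic, hence a \emph{sub}solution of $-\Delta - H$, not a supersolution.

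The paper resolves this in two steps. First, $H$ is chosen with the logarithmic correction $H(x)=\kappa\abs{x}^{-2}\bigl(\log\tfrac{\abs{x}}{\rho_0}\bigr)^{-(1+\beta)}$; this is essential because with the pure weight $\kappa\abs{x}^{-2}$ the minimal positive solution of $-\Delta-H$ in an exterior domain would decay only like $\abs{x}^{-\gamma_+}$ with $\gamma_+<N-2$, weakening Stage~4. Second, the barrier $W_\eps$ is built by \emph{patching}: a $\cosh$-profile $\gamma_\eps$ inside a fixed ball $B(x_\eps,r)\subset\Lambda$ (where $V\ge\inf_\Lambda V>0$ provides the needed positivity) glued to the minimal positive solution $w$ of $-\Delta w - Hw=0$ in $\Lambda^c$ with $w=1$ on $\partial\Lambda$. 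This $w$ satisfies the equation \emph{exactly}, so the supersolution inequality outside $\Lambda$ is automatic, and one shows separately (via an explicit supersolution of the form $\abs{x}^{-(N-2)}\bigl((N-2)\beta - \kappa(\log\tfrac{\abs{x}}{\rho_0})^{-\beta}\bigr)$) that $w(x)\asymp\abs{x}^{-(N-2)}$. The function $\Phi_\eps$ then enters only as an \emph{upper bound} on the patched barrier $W_\eps$, not as the barrier itself. You should also match on $\partial B(x_\eps,\eps R)$, not on $\partial\Lambda$.
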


If $V$ satisfies the fast decay assumption \eqref{fast-delta}
then the restriction $\sigma<(N-2)p-N$ in the theorem is sharp,
in the sense that \eqref{Peps} has no positive solutions for $\sigma\ge (N-2)p-N$ (see e.g.\ \cite{KLS}).
If $V$ satisfies \eqref{fast-delta} then the upper bound \eqref{fast} is sharp as $\abs{x}\to\infty$ in the sense that for each fixed $\eps\in(0, \eps_0]$
\[
  \liminf_{\abs{x}\to\infty}\abs{x}^{N-2} u_\eps(x)>0.
\]
This follows, e.g., by comparison with an explicit subsolution at infinity $\abs{x}^{-(N-2)}(1+\abs{x}^{-\delta})$ of $-\Delta+V$.

The asymptotic behavior of the solutions can be described as follows. Let $(\eps_n)_{n\ge 1}$ be a sequence that decreases to zero,
and $(x_n)_{n\ge 1}\subset\Lambda$ be a sequence such that
\[\liminf_{n \to \infty} u_{\eps_n}(x_{n}) > 0\quad\text{and}\quad x_n\to\bar x\in\Lambda.\]
Then $\A(\bar x)=\inf_\Lambda\A$ and, along a subsequence, the sequence of rescaled solutions
\[v_n(x):=u_{\eps_n}(x_n+\eps x)\]
converges in the $C^1_\mathrm{loc}(\R^N)$ topology
to a positive solution $v\in H^1(\R^N)\cap C^1(\R^N)$ of the limiting equation
\[
-\Delta v+V(\bar x)v=K(\bar x) v^p \quad\text{in $\R^N$}.
\]
See Lemma~\ref{lemma-conv} below for details.

\subsection{Organization of the proof}
The proof of Theorem~\ref{T-main} is organized as follows.
In Section~\ref{s-Penalization} we introduce an adequate modification of the penalization
scheme of \cite{BVS} which allows us to include into consideration potential $V$ with fast decay.
Then we apply the mountain-pass lemma to establish the existence of a family of positive solutions $(u_\eps)$
to the penalized problem. In Section~\ref{s-Energy} we obtain energy and uniform estimates
on the mountain-pass solutions. Many of the proofs in this section require only very minor
modifications comparing to the results in \cite{BVS}, so we omit the details in most cases.
In particular, we establish in Section~\ref{s-Energy} a first weak concentration result,
Lemma~\ref{Prop-A}, which tells that solutions $u_\eps$ uniformly decay to zero as $\eps\to 0$
outside a family of balls $B(x_\eps, \eps R)$ whose centers $x_\eps$ concentrate to the local minima of
the concentration function $\A(x)$. This information becomes crucial in Section~\ref{s-Barriers},
where it is used to arrange a comparison of solutions $u_\eps$ with carefully constructed
family of barrier functions, which have sharp asymptotic both as $\eps\to 0$ and $x\to\infty$.
This allows to establish the sharp concentration bound \eqref{fast} and at the same time
to show that solutions $u_\eps$ of the modified problem actually solve
the original problem \eqref{Peps}, which completes the proof of Theorem~\ref{T-main}.

\section{Penalization scheme}\label{s-Penalization}

\subsection{Penalization potential}
Without loss of generality, we assume that $0 \in \Lambda$. One can then choose $\rho > 0$ so that $\overline{B(0, \rho)}\subset\Lambda$.
Let $\chi_\Lambda$ denote the characteristic function of the set $\Lambda$.
We define the penalization potential $\H:\R^N\to\R$ by
\begin{equation}\label{H}
\H(x):=\frac{\kappa(1-\chi_{\Lambda}(x))}{\abs{x}^2\big(\log\frac{\abs{x}}{\rho_0}\big)^{1+\beta}},
\end{equation}
where $\beta > 0$, $\rho_0>0$ and $\kappa>0$ are chosen so that $\rho_0 < \rho$ and
\[
  \frac{\kappa}{\big(\log \frac{\rho}{\rho_0}\big)^{1+\beta}} < \frac{(N-2)^2}{4}.
\]
The Hardy inequality
\[
  \int_{\R^N} \abs{\nabla u}^2 \ge \frac{(N-2)^2}{4} \int_{\R^N} \frac{\abs{u(x)}^2}{\abs{x}^2}\,dx
  \qquad\forall u\in C^\infty_c(\R^N)
\]
ensures positivity of the quadratic form associated to $-\Delta-\H$ on $\R^N$.

\begin{lemma}
\label{lemmaQuadratic}
For every $u\in\D^1_0(\R^N)$,
\begin{equation}\label{kappa-form}
\int_{\R^N}\left(|\nabla u|^2-H \abs{u}^2\right)\ge \biggl( \frac{(N-2)^2}{4}-\frac{\kappa}{\big(\log \frac{\rho}{\rho_0}\big)^{1+\beta}}\biggr) \int_{\R^N}\frac{\abs{u(x)}^2}{\abs{x}^2}\, dx.
\end{equation}
\end{lemma}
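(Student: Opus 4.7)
The plan is to reduce the quadratic form inequality to a pointwise domination of $H$ by a multiple of the Hardy weight $|x|^{-2}$, and then invoke the Hardy inequality on the resulting expression.

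First I would exploit the support condition on $H$. Since $\overline{B(0,\rho)} \subset \Lambda$, the factor $1-\chi_\Lambda$ forces $H$ to vanish on $B(0,\rho)$; in particular $H(x) = 0$ whenever $|x| < \rho$. On the complementary set $\R^N \setminus \Lambda$, every $x$ satisfies $|x| \ge \rho > \rho_0$, so $\log(|x|/\rho_0) \ge \log(\rho/\rho_0) > 0$ and, since $\beta > 0$, the power $(\log(|x|/\rho_0))^{1+\beta}$ is bounded below by $(\log(\rho/\rho_0))^{1+\beta}$. Combining the two cases yields the uniform pointwise bound
\[
  0 \le H(x) \le \frac{\kappa}{\bigl(\log\frac{\rho}{\rho_0}\bigr)^{1+\beta}}\, \frac{1}{|x|^2}
  \qquad \text{for every } x\in\R^N,
\]
with $H(x) = 0$ on $B(0,\rho)$.

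Next I would multiply by $|u|^2$ and integrate, obtaining
\[
  \int_{\R^N} H\,|u|^2 \;\le\; \frac{\kappa}{\bigl(\log\frac{\rho}{\rho_0}\bigr)^{1+\beta}} \int_{\R^N} \frac{|u(x)|^2}{|x|^2}\,dx.
\]
The Hardy inequality stated just before the lemma, extended from $C^\infty_c(\R^N)$ to $\D^1_0(\R^N)$ by the density of test functions in this completion, gives
\[
  \int_{\R^N} |\nabla u|^2 \;\ge\; \frac{(N-2)^2}{4} \int_{\R^N} \frac{|u(x)|^2}{|x|^2}\,dx.
\]
Subtracting the two displays produces exactly \eqref{kappa-form}.

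There is no real obstacle here: the choice of the constants $\kappa$, $\rho_0$, $\rho$ in the definition of $H$ was made precisely so that the comparison step above produces a positive coefficient on the right-hand side. The only minor technicality is the passage from $C^\infty_c(\R^N)$ to $\D^1_0(\R^N)$ in the Hardy inequality, which follows because both sides of the Hardy inequality are continuous with respect to the $L^2$-norm of the gradient on $C^\infty_c(\R^N)$, so the inequality passes to the completion $\D^1_0(\R^N)$; the integrals in \eqref{kappa-form} are then well defined (possibly with value $+\infty$ on the right, in which case the inequality is trivial).
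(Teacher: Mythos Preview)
Your proof is correct and follows exactly the approach the paper intends: the lemma is presented there as an immediate consequence of the Hardy inequality, and your argument makes explicit the one-line pointwise bound $H(x)\le \kappa\bigl(\log\tfrac{\rho}{\rho_0}\bigr)^{-(1+\beta)}|x|^{-2}$ (using that $H$ vanishes on all of $\Lambda$ and that $|x|>\rho$ on $\Lambda^c$) followed by subtraction from Hardy's inequality.
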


This implies, in particular, that the linear operator $-\Delta-\H$ satisfies the comparison principle
on open subdomains $G\subseteq\R^N$. We formulate it in a form
which is convenient for our purposes:

\begin{lemma}[Comparison Principle]
\label{lemmaComparison}
Let $G\subseteq\R^N$ be a smooth domain.
Assume that $u, v\in H^1_\mathrm{loc}(G)\cap C(\bar G)$ satisfy
\[
-\Delta u-H u\ge -\Delta v-H v \quad\text{in $G$},
\]
$\nabla(u-v)_-\in L^2(G)$ and $(u-v)_-\in L^2\big(G, (1+\abs{x})^{-2}dx\big)$.
If $\partial G\neq\emptyset$, assume in addition that $u\ge v$ on $\partial G$.
Then $u\ge v$ in $G$.
\end{lemma}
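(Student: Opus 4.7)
The plan is to use $w := (u-v)_-$ as a test function against the weak formulation of the differential inequality, and then invoke the positivity of the quadratic form from Lemma~\ref{lemmaQuadratic} to force $w \equiv 0$. The hypotheses are precisely what is needed so that the extension of $w$ by zero to all of $\R^N$ belongs to $\D^1_0(\R^N)$, which is where Lemma~\ref{lemmaQuadratic} lives.

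First I would set $w := (u-v)_-$ on $G$ and define $\tilde w$ to be its extension by zero to $\R^N \setminus G$. Since $u, v \in C(\bar G)$ and $u \ge v$ on $\partial G$ (in the case $\partial G \neq \emptyset$), the function $w$ vanishes continuously on $\partial G$, so $\tilde w \in H^1_{\mathrm{loc}}(\R^N)$ with $\nabla \tilde w \in L^2(\R^N)$ (this is the role of the hypothesis $\nabla(u-v)_- \in L^2(G)$). The weighted integrability $(u-v)_- \in L^2\big(G, (1+\abs{x})^{-2}dx\big)$, together with a cut-off argument based on compactly supported approximations, places $\tilde w$ in $\D^1_0(\R^N)$ and simultaneously makes the penalization term $\int H \tilde w^2$ finite, since $H$ is bounded by a constant multiple of $(1+\abs{x})^{-2}$ outside $\Lambda$ and vanishes inside.

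Next I would test the inequality $-\Delta(u-v) - H(u-v) \ge 0$ against $w$ in $G$. On the set $\{u < v\} \cap G$ one has $u - v = -w$ and $\nabla(u-v) = -\nabla w$, while elsewhere $w = 0$, so the weak formulation gives
\[
-\int_G \abs{\nabla w}^2 \, dx + \int_G H\, w^2 \, dx \;\ge\; 0.
\]
Since $\tilde w$ vanishes outside $G$ (and on $\partial G$), both integrals extend trivially to $\R^N$, and Lemma~\ref{lemmaQuadratic} yields
\[
0 \;\le\; \int_{\R^N} \bigl(\abs{\nabla \tilde w}^2 - H \tilde w^2\bigr) \, dx
\;\le\; 0,
\]
with the lower bound controlled by a strictly positive constant times $\int_{\R^N} \tilde w^2/\abs{x}^2 \, dx$. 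This forces $\tilde w \equiv 0$ and hence $u \ge v$ in $G$.

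The main obstacle I anticipate is the rigorous justification that $\tilde w \in \D^1_0(\R^N)$ and that $w$ is an admissible test function in the weak inequality for $-\Delta - H$: one needs to approximate $\tilde w$ by functions in $C^\infty_c(\R^N)$ in the $\D^1_0$ norm, using the boundary continuity, the gradient integrability, and the weighted $L^2$ bound to handle decay at infinity and near $\partial G$. Once this approximation is in place, the rest of the argument is an application of Lemma~\ref{lemmaQuadratic}.
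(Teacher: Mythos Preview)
Your approach is exactly the one the paper sketches: multiply the inequation by $(u-v)_-$, integrate by parts, and apply the quadratic form inequality \eqref{kappa-form} (Lemma~\ref{lemmaQuadratic}) to conclude. Your write-up in fact supplies more detail than the paper does, correctly identifying the role of the integrability hypotheses in making $(u-v)_-$ an admissible test function in $\D^1_0(\R^N)$, which the paper only alludes to in the remark following the statement.
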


\begin{remark}
The integrability assumption $(u-v)_-\in L^2\big(G, (1+\abs{x})^{-2}dx\big)$ is required implicitly in the proof of the corresponding Proposition 24 in \cite{BVS}, but is not mentioned explicitly in the statement therein.
\end{remark}

Lemma~\ref{lemmaComparison} is proved by multiplying the inequation by $(u-v)_-$, integrating by parts and applying \eqref{kappa-form} (cf. \cite{Agmon, BVS, LLM}).

Now we are in a position to construct a minimal positive solution to the operator $-\Delta-H$ in the complement of $\Lambda$.

\begin{lemma}
\label{lemmaMinimalSolution}
There exists $w \in C^2 (\Lambda^c)$ such that
\begin{equation}
\label{eqMinimalSolution}
  \left\{\begin{aligned}
    -\Delta w - H w &=0 &&\text{in $\overline \Lambda^c$}, \\
    w&=1&& \text{on $\partial \Lambda$}.
  \end{aligned}
\right.
\end{equation}
and
\[
 \int_{\Lambda^c}\Big(\abs{\nabla w(x)}^2+\frac{\abs{w(x)}^2}{\abs{x}^2}\Big)dx < \infty.
\]
Moreover, there exists $0 < c <  C <\infty$ such that for every $x \in \Lambda^c$,
\[
 c\abs{x}^{-(N-2)} \le w(x) \le C \abs{x}^{-(N-2)}.
\]
\end{lemma}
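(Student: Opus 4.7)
The plan is to build $w$ variationally and then clamp it between two explicit radial barriers. Fix a cut-off $\eta\in C^\infty_c(\R^N)$ with $\eta\equiv 1$ in a neighbourhood of $\partial\Lambda$. I would look for $w=\eta+\tilde w$ with $\tilde w\in\D^1_0(\Lambda^c)$ (the closure of $C^\infty_c(\Lambda^c)$ under $\int|\nabla\cdot|^2$) solving the variational identity $a(\tilde w,\varphi)=-a(\eta,\varphi)$ for all test functions $\varphi$, where $a(u,v):=\int_{\Lambda^c}(\nabla u\cdot\nabla v-Huv)$. Continuity of $a$ is routine, and the coercivity $a(u,u)\ge (1-\lambda)\int|\nabla u|^2$ with $\lambda<1$ follows from Lemma~\ref{lemmaQuadratic} combined with the smallness of $\kappa/(\log(\rho/\rho_0))^{1+\beta}$, so Lax--Milgram yields a unique $\tilde w$. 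Elliptic regularity upgrades $w$ to $C^2(\Lambda^c)$ since $\Lambda$ is smooth and $H$ is continuous off $\partial\Lambda$, and the integrability $\int_{\Lambda^c}(|\nabla w|^2+|w|^2/|x|^2)<\infty$ is inherited from $\tilde w\in\D^1_0(\Lambda^c)$ and the Hardy inequality. The crude bound $0\le w\le 1$ comes from approximating $w$ as the monotone limit of the analogous problems on $\Lambda^c\cap B_R$ with zero outer boundary data, where Lemma~\ref{lemmaComparison} applied against the constants $0$ and $1$ controls each $w_R$.

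The obstacle for the upper bound is that $|x|^{-(N-2)}$ is harmonic, hence only a \emph{sub}solution of $-\Delta-H$, so I would construct a supersolution with a logarithmic correction. Setting $v(x):=A|x|^{-(N-2)}\psi(\log(|x|/\rho_0))$, a short radial computation reduces $(-\Delta-H)v\ge 0$ to the ODE inequality
\[
-\psi''(s)+(N-2)\psi'(s)\ge\kappa\,s^{-(1+\beta)}\psi(s),\qquad s=\log(|x|/\rho_0).
\]
The ansatz $\psi(s)=1-C_0 s^{-\beta}$ with $C_0(N-2)\beta>\kappa$ makes every term nonnegative and keeps $\psi\ge\tfrac12$ once $s$ exceeds some threshold $s_0$. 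Restricting to $|x|\ge R_0:=\rho_0 e^{s_0}$ and choosing $A$ so large that $v\ge w$ on the sphere $\{|x|=R_0\}$ (possible because $w\le 1$), Lemma~\ref{lemmaComparison} on the exterior domain $\{|x|>R_0\}$ delivers $w\le v\le A|x|^{-(N-2)}$; the bound extends trivially to $\{|x|\le R_0\}\cap\Lambda^c$ via $w\le 1$.

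The lower bound is immediate: since $|x|^{-(N-2)}$ is harmonic and $H\ge 0$, the function $c|x|^{-(N-2)}$ is a subsolution of $-\Delta-H$ on $\R^N\setminus\{0\}$, so choosing $c>0$ small enough that $c|x|^{-(N-2)}\le 1$ on $\partial\Lambda$ and noting that $c|x|^{-(N-2)}\to 0$ at infinity, Lemma~\ref{lemmaComparison} on $\Lambda^c$ gives $w(x)\ge c|x|^{-(N-2)}$.

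The main obstacle is the calibration of the logarithmic correction in the supersolution. The exponent $N-2$ is critical for $-\Delta$, so $|x|^{-(N-2)}$ is exactly a subsolution, and the correction $s^{-\beta}$ must be tuned against both $\kappa$ and the exponent $\beta$ in the weight $H$ from \eqref{H}; the compatibility condition $C_0(N-2)\beta>\kappa$ matches the algebraic structure of $H$ precisely, which is presumably why the exponent $1+\beta$ with $\beta>0$ was built into the penalization potential.
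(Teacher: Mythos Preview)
Your approach is essentially the paper's: variational construction of $w$, upper bound via an explicit radial supersolution of the form $\abs{x}^{-(N-2)}\bigl(\text{const}-\text{const}'\,(\log(\abs{x}/\rho_0))^{-\beta}\bigr)$, and lower bound via the harmonic subsolution $\abs{x}^{-(N-2)}$. The paper takes $W(x)=\abs{x}^{-(N-2)}\bigl((N-2)\beta-\kappa(\log(\abs{x}/\rho_0))^{-\beta}\bigr)$, which is your $\psi$ with the borderline choice $C_0(N-2)\beta=\kappa$; the leading terms then cancel exactly and positivity of $(-\Delta-H)W$ comes from the lower-order remainders, but the effect is identical to your strict inequality.

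There is, however, a genuine slip in your ``crude bound''. The constant $1$ is \emph{not} a supersolution of $-\Delta-H$: since $H\ge 0$ and $H>0$ on $\Lambda^c$, one has $-\Delta 1-H\cdot 1=-H\le 0$, so $1$ is a \emph{sub}solution, and Lemma~\ref{lemmaComparison} cannot give $w_R\le 1$. (For a zeroth-order potential $H>0$ strictly below the principal eigenvalue, the Dirichlet solution with boundary data $1$ can genuinely exceed $1$ in the interior.) Fortunately you do not need $w\le 1$: once elliptic regularity gives $w\in C(\Lambda^c)$, $w$ is bounded on the compact sphere $\{\abs{x}=R_0\}$ and on the compact set $\{\abs{x}\le R_0\}\cap\Lambda^c$, and that boundedness is all the comparison with your supersolution $v$ requires. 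This is precisely how the paper proceeds. Drop the approximation by $w_R$ and the appeal to $w\le 1$; the rest of your argument stands.
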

\begin{proof}
First one constructs $w$ by minimizing $\int_{\Lambda^c} \abs{\nabla w}^2-H \abs{w}^2$. By classical regularity estimates, $w \in C^2(\Lambda^c)$.

Now set
\begin{equation}\label{w-eps}
W (x):=\abs{x}^{-(N-2)}\Big((N-2) \beta - \kappa\big(\log \tfrac{\abs{x}}{\rho_0}\big)^{-\beta}\Big),
\end{equation}
where $\beta>0$ is taken from \eqref{H}. Computing
\[
 -\Delta W(x)=\frac{\kappa(N-2)\beta}{\abs{x}^N(\log \frac{\abs{x}}{\rho_0})^{1+\beta} }+\frac{\kappa\beta(\beta+1)}{\abs{x}^N(\log \frac{\abs{x}}{\rho_0})^{2+\beta}}
\]
one verifies that the function $W$ is a supersolution to $-\Delta-\H$ in $\Lambda^c$. Choosing $
R$ so that $\Lambda \subset B(0, R)$, and
\[
 (N-2) \beta \left(\log \frac{R}{\rho_0}\right)^{\beta} > \kappa,
\]
one checks that $W$ is positive on $\partial B(0, R) \subset \Lambda^c$.
By the comparison principle of Lemma~\ref{lemmaComparison}, $w$ is bounded from above by a positive multiple of $W$ in $\R^N \setminus B(0, R)$. Since $W(x) \le (N-2)\beta \abs{x}^{-(N-2)}$ and $w$ is continuous on $B(0, R) \setminus \Lambda$, one obtains the desired upper bound.

On the other hand, the function
\[
  v(x):=\abs{x}^{-(N-2)}
\]
is a positive subsolution to $-\Delta-\H$ in $\Lambda^c$. Thus, by Lemma~\ref{lemmaComparison},
we obtain the bound from below.
\end{proof}

The previous propositions summarize the properties of the potential  $H$, which is chosen as a largest possible potential such that the quadratic form inequality \eqref{kappa-form} of Lemma~\ref{lemmaQuadratic}, and, as a consequence, the comparison  principle (Lemma~\ref{lemmaComparison}) hold, and the minimal positive solution of \eqref{eqMinimalSolution} decays at infinity as $\abs{x}^{-(N-2)}$ (Lemma~\ref{lemmaMinimalSolution}).
Notice however that the asymptotics of the minimal positive solution only plays a role in Section~\ref{s-Barriers}, for the construction of barrier functions.

\subsection{Penalized nonlinearity.}
Define the truncated nonlinearity $g_\eps:\R^N\times\R^+\to\R$ by
\begin{equation}\label{nonlin}
g_\eps(x, s):=\chi_\Lambda(x)K(x)s^p+\min\bigl(\eps^2 \H(x), \, K(x)s^{p-1}\bigr)s.
\end{equation}
Define also $G_\eps(x, s):=\int_0^s g_\eps(x, t)\, dt$. The function $g_\eps$ is a Carath\'eodory function that satisfies the following properties:
\smallskip
\begin{enumerate}
\item[$(g_1)$] $g_\eps(x, s)=o(s)$ as $s\to 0^+$ uniformly on compact subsets of $\R^N$;

\item[$(g_2)$] $g_\eps(x, s)=O(s^p)$ as $s\to\infty$ uniformly on compact subsets of $\R^N$;

\item[$(g_3)$] $0\le (p+1)G_\eps(x, s)\le sg_\eps(x, s)$ for $(x, s)\in\Lambda\times\R^+$;

\item[$(g_4)$]$0\le 2G_\eps(x, s)\le sg_\eps(x, s)\le \eps^2 H(x)s^2$ for $(x, s)\in \Lambda^c\times\R^+$.

\end{enumerate}
We are now in a position to introduce the penalized functional
\[
\J_\eps(u):=\frac{1}{2}\int_{\R^N}\left(\eps^2|\nabla u(x)|^2+V(x)|u(x)|^2\right)\, dx-
\frac{1}{p+1}\int_{\R^N}G_\eps(x, u(x))\, dx.
\]
Using $(g_2)$, $(g_4)$ and Hardy's inequality, it is standard to check that $\J_\eps$ is well-defined
and that $\J_\eps\in C^1(\D^1_V(\R^N), \R)$.
Moreover, critical points of $\J_\eps$ are weak solutions of the equation
\begin{equation}
\label{Pteps}
-\eps^2\Delta u  + V(x) u = g_\eps(x, u) \quad\text{in $\R^N$}.\tag{$\Tilde{\mathcal{P}}_\eps$}
\end{equation}
One can also see that $0$ is a strict local minimum of $\J_\eps$
and that $\J_\eps$ is unbounded from below (cf. \cite[Lemma 5]{BVS});
so, $\J_\eps$ has the Mountain Pass geometry.
We are going to show that $\J_\eps$ satisfies the Palais--Smale condition.

\begin{lemma}
Let $(u_n)\subset\D^1_V(\R^N)$ be a Palais-Smale--sequence for $\J_\eps$, i.e., for some $c \in \R$,
\[
\J_\eps(u_n)\to c, \text{ and } \qquad \J_\eps^\prime(u_n)\to 0.
\]
If $1 < p < \frac{N+2}{N-2}$, then, up to a subsequence, $(u_n)$ converges strongly to $u \in \D^1_V(\R^N)$.
\end{lemma}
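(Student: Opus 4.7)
The plan is to carry out the classical three-step verification of the Palais--Smale condition: first establish that $(u_n)$ is bounded in $\D^1_V(\R^N)$, then extract a weakly convergent subsequence whose limit is a critical point of $\J_\eps$, and finally upgrade weak to strong convergence. The penalization potential $H$ has been designed precisely so that the quadratic form inequality of Lemma~\ref{lemmaQuadratic} controls the contribution of $g_\eps$ on the non-compact region $\Lambda^c$, where $V$ is allowed to decay arbitrarily fast.

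For boundedness I would examine the combination $\J_\eps(u_n) - \tau \langle \J_\eps'(u_n), u_n\rangle$ for a fixed $\tau \in \bigl(\tfrac{1}{p+1}, \tfrac{1}{2}\bigr)$. On $\Lambda$, property $(g_3)$ forces the integrand $\tau s g_\eps - G_\eps$ to be nonnegative; on $\Lambda^c$, $(g_4)$ only yields $\tau s g_\eps - G_\eps \ge (\tau - \tfrac{1}{2})\eps^2 H s^2$. To absorb this deficit, one combines the pointwise upper bound $H(x) \le \kappa / \bigl(\abs{x}^2 (\log(\rho/\rho_0))^{1+\beta}\bigr)$ valid on $\Lambda^c$ with the classical Hardy inequality to see that $\eps^2 \int_{\R^N} H u_n^2 \le \theta \|u_n\|_\eps^2$ for some constant $\theta < 1$ arising from the smallness of $\kappa$ in the definition \eqref{H}. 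Combined with the Palais--Smale data $\J_\eps(u_n) = O(1)$ and $\J_\eps'(u_n) = o(1)\,\|u_n\|_\eps$, this forces $\|u_n\|_\eps = O(1)$.

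By reflexivity, along a subsequence $u_n \weakto u$ weakly in $\D^1_V(\R^N)$, $u_n \to u$ strongly in $L^q_{loc}(\R^N)$ for every $q < 2^* = 2N/(N-2)$, and pointwise almost everywhere. To identify $u$ as a critical point, I would test $\J_\eps'(u_n)$ against an arbitrary $\varphi \in C^\infty_c(\R^N)$: the subcritical growth $(g_2)$, the compact support of $\varphi$, and standard Nemytskii continuity give $\int g_\eps(x, u_n)\varphi \to \int g_\eps(x, u)\varphi$, so $\langle \J_\eps'(u), \varphi\rangle = 0$.

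The crux is the strong convergence. Testing $\J_\eps'(u_n) - \J_\eps'(u)$ against $u_n - u$ yields
\[
  \|u_n - u\|_\eps^2 = \int_{\R^N} \bigl(g_\eps(x, u_n) - g_\eps(x, u)\bigr)(u_n - u)\, dx + o(1),
\]
so it suffices to show that the right-hand integral tends to zero. On the bounded set $\Lambda$, the nonlinearity reduces to the subcritical power $K(x) s^p$ with $p+1 < 2^*$, and Rellich--Kondrachov combined with Nemytskii continuity dispatches this contribution. On $\Lambda^c$ I would invoke the pointwise penalization bound $\abs{g_\eps(x, s)} \le \eps^2 H(x) \abs{s}$ together with Cauchy--Schwarz to estimate the remaining integral by
\[
  \eps^2 \Bigl(\int_{\R^N} H(\abs{u_n}+\abs{u})^2\Bigr)^{1/2}\Bigl(\int_{\R^N} H(u_n - u)^2\Bigr)^{1/2}.
\]
The first factor stays uniformly bounded by the Hardy-type estimate already invoked in the boundedness step, while the second is shown to vanish by splitting $\R^N$ at a large radius: inside a ball $B(0, R)$, $H$ is bounded and local $L^2$ compactness applies, while outside, the upper bound on $H$ from \eqref{H} together with the uniform weighted $L^2$-Hardy bound makes the tail arbitrarily small uniformly in $n$. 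The main obstacle is precisely this $\Lambda^c$ estimate: standard compactness is unavailable on the unbounded domain with possibly vanishing $V$, and all the needed convergence must be extracted from the sub-Hardy structure encoded in the penalized nonlinearity.
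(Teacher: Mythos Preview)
Your proposal is correct and follows essentially the same route as the paper's proof: boundedness from $(g_3)$--$(g_4)$ combined with the Hardy-type control of $\int H u_n^2$, weak convergence, and then the identity $\|u_n-u\|_\eps^2=\int(g_\eps(\cdot,u_n)-g_\eps(\cdot,u))(u_n-u)+o(1)$ handled by local Rellich compactness on bounded sets together with the penalization bound $g_\eps(x,s)\le \eps^2 H(x)s$ and the uniform tail smallness of $\int_{\{|x|>R\}} H|u_n|^2$ coming from the logarithmic factor in $H$. The paper bounds the exterior integrand directly by $\eps^2 H(|u|^2+|u_n|^2)$ rather than passing through Cauchy--Schwarz, and skips your explicit identification of $u$ as a critical point, but these are cosmetic differences.
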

\begin{proof}
It is standard to verify using $(g_3)$ and $(g_4)$ that $(u_n)$ is bounded in $\D^1_V(\R^N)$.
Up to a subsequence, $u_n \weakto u \in\D^1_V(\R^N)$. By Rellich's theorem, one has thus $u_n \to u$ in $L^{p+1}_{\mathrm{loc}}(\R^n)$.

Further, one has, by Hardy's inequality, for $R>\rho_0$,
\[
 \int_{\R^N \setminus B(0, R)} H \abs{u}^2 \le \frac{\kappa}{\big(\log \frac{R}{\rho_0}\big)^{1+\beta}} \int_{\R^N} \frac{\abs{u(x)}^2}{\abs{x}^2}\, dx
 \le \frac{\kappa }{\big(\frac{N-2}{2}\big)^2 \big(\log \frac{R}{\rho_0}\big)^{1+\beta}} \int_{\R^N} \abs{\nabla u}^2.
\]
Since $(u_n)$ is bounded, for every $\delta > 0$ there exists $R > 1$ such that for $n \in \N$,
\begin{equation}
\label{e-2}
 \int_{\R^N \setminus B(0, R)} H\abs{u_n}^2 \le \delta.
\end{equation}
One has now, by assumption,
\[
\begin{split}
\limsup_{n \to \infty} \Norm{u_n-u}^2_\eps
&=\limsup_{n \to \infty} \int_{\R^N} (g_\eps(x, u(x))-g_\eps(x, u_n(x)))(u(x)-u_n(x))\,dx\\
&\le \limsup_{n \to \infty} \int_{B(0, R)} (g_\eps(x, u(x))-g_\eps(x, u_n(x)))(u(x)-u_n(x))\,dx \\
&\qquad+\limsup_{n \to \infty} \int_{\R^N\setminus B(0, R)} \eps^2 H (\abs{u}^2+\abs{u_n}^2) \\
& \le 2\eps^2 \delta.
\end{split}
\]
Since $\eps > 0$ is fixed and $\delta > 0$ is arbitrary, this proves the claim.
\end{proof}

\begin{remark}
The same arguments prove that the mapping $u \mapsto g(\cdot, u(\cdot))$ is completely continuous from $\mathcal{D}^1_V(\R^N) \to \mathcal{D}^1_V(\R^N)^*$, i.e.\ maps  weakly convergent sequences to strongly convergent sequences.
This simplifies the proof of the existence comparing with previous penalizations, where the corresponding mapping was not completely continuous, which made the Palais--Smale condition more delicate to establish \cite{BVS, dPF}.
\end{remark}

Since $\J_\eps$ satisfies the Palais--Smale condition, all the assumptions of the Mountain Pass Lemma are fulfilled. We obtain the following existence result for modified problem \eqref{Pteps}.

\begin{proposition}\label{MPL}
Let $1<p<\frac{N+2}{N-2}$. Set
\[\Gamma_\eps:=\{\gamma\in C([0, 1], \D^1_V(\R^N))\mid \gamma(0)=0, \J_\eps(\gamma(1))<0\}.\]
For every $\eps>0$, the minimax level
\[
  c_\eps:=\inf_{\gamma\in\Gamma_\eps}\max_{t\in[0, 1]}\J_\eps(\gamma(t))>0,
\]
is a critical value of $\J_\eps$.
\end{proposition}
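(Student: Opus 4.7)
The plan is to apply the classical Mountain Pass Lemma of Ambrosetti--Rabinowitz. Since $\J_\eps \in C^1(\D^1_V(\R^N),\R)$ already satisfies the Palais--Smale condition by the preceding lemma, it suffices to verify the two geometric hypotheses: (i) there exist $r,\alpha>0$ such that $\J_\eps(u)\ge \alpha$ whenever $\Norm{u}_\eps=r$; and (ii) $\Gamma_\eps\neq\emptyset$.

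For (i), I would split $\int_{\R^N} G_\eps(x,u)\,dx$ according to $\R^N = \Lambda \cup \Lambda^c$. On $\Lambda^c$, property $(g_4)$ gives $G_\eps(x,u)\le \tfrac{1}{2}\eps^2\H(x)\abs{u}^2$, and Lemma~\ref{lemmaQuadratic} shows that this contribution is absorbed by $\tfrac{1}{2}\int\eps^2\abs{\nabla u}^2$, losing only a factor
$$\mu := \frac{4\kappa}{(N-2)^2\,\big(\log \tfrac{\rho}{\rho_0}\big)^{1+\beta}} < 1,$$
the strict inequality being precisely the smallness condition built into the definition of $\H$. On $\Lambda$ the potential $\H$ vanishes, so $G_\eps(x,u) = \tfrac{K(x)}{p+1}u^{p+1}$; since $K$ is bounded on $\overline{\Lambda}$ and $1 < p+1 < 2N/(N-2)$, the Sobolev embedding yields $\int_\Lambda G_\eps(x,u)\,dx \le C\Norm{u}_\eps^{p+1}$. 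Combining,
$$\J_\eps(u) \ge c_1\Norm{u}_\eps^2 - c_2\Norm{u}_\eps^{p+1},$$
from which the required $r,\alpha$ follow.

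For (ii), note that hypothesis $(\A)$ forces $\inf_\Lambda \A < \infty$, so $K$ is strictly positive somewhere in $\Lambda$, hence on an open subset by continuity. I would therefore pick a nonnegative $u_0 \in C_c^\infty(\Lambda)$ with $\int_\Lambda K u_0^{p+1} > 0$. Since $\H\equiv 0$ on $\Lambda$ and $u_0$ is supported there, one computes directly
$$\J_\eps(t u_0) = \frac{t^2}{2}\Norm{u_0}_\eps^2 - \frac{t^{p+1}}{p+1}\int_\Lambda K\,u_0^{p+1},$$
which tends to $-\infty$ as $t\to\infty$. Taking $T$ so large that $\J_\eps(T u_0) < 0$, the straight-line segment $t\mapsto tT u_0$ is an admissible path in $\Gamma_\eps$.

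The main obstacle is really (i): without the quantitative Hardy-type bound of Lemma~\ref{lemmaQuadratic}, the nonlinear penalization on the unbounded domain $\Lambda^c$ could not be dominated by the kinetic energy, and the origin would fail to be a strict local minimum of $\J_\eps$. Once (i) and (ii) are in place, every path in $\Gamma_\eps$ must cross the sphere $\Norm{u}_\eps = r$, so $c_\eps \ge \alpha > 0$, and the Mountain Pass Lemma combined with the Palais--Smale condition delivers a critical point at the level $c_\eps$.
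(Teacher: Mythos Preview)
Your proposal is correct and follows the same route as the paper: verify the mountain-pass geometry, invoke the Palais--Smale lemma already proved, and apply the Ambrosetti--Rabinowitz theorem. The paper does not spell out the geometric hypotheses but simply asserts that $0$ is a strict local minimum and $\J_\eps$ is unbounded below, referring to \cite[Lemma~5]{BVS}; your argument supplies exactly these details, using Lemma~\ref{lemmaQuadratic} for the $\Lambda^c$-contribution and the Sobolev embedding on the bounded set $\Lambda$, which is precisely how the referenced lemma proceeds in the present penalization.
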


We call every critical point $u\in\D^1_V(\R^N)$
such that $\J_\eps(u)=c_\eps$ a \emph{least energy} solution of \eqref{Pteps}.

By the standard regularity theory, if $u\in H^1_\mathrm{loc}(\R^N)$ is a solution of \eqref{Pteps}, then $u\in W^{2, q}_\mathrm{loc}(\R^N)$ for every $q\in(1, \infty)$.
In particular, $u\in C^{1, \alpha}_\mathrm{loc}(\R^N)$ for every $0<\alpha<1$.
In general, no further regularity can be expected
as $g_\eps$ is not continuous.
Also, by the strong maximum principle, any nontrivial nonnegative solution $u\in C^ {1,\alpha}_\mathrm{loc}(\R^N)$ of \eqref{Pteps} is strictly positive in $\R^N$.

\section{Asymptotics of solutions}\label{s-Energy}

\subsection{Upper estimate on the energy}

For every $x_*\in\Lambda$, define the functional $\F_{x_*}:H^1(\R^N)\to\R$ by
\[
  \F_{x_*}(u):=\frac{1}{2}\int_{\R^N}\left(|\nabla u|^2+V(x_*)|u|^2\right)-\frac{1}{p+1}\int_{\R^N}K(x_*) |u|^{p+1}.
\]
Set
\[
  \Gamma_0:=\{\gamma\in C([0, 1], H^1(\R^N))\mid \gamma(0)=0 \text{ and } \F_0(\gamma(1))<0\}
\]
and consider a minimax level
\[
  c_{x_*}:=\inf_{\gamma\in\Gamma_0}\max_{t\in[0, 1]}\F_0(\gamma(t)).
\]
By a scaling argument,
\[
c_{x_*}=\frac{(S_{p+1})^r}{r} \A (x_*),
\]
where
\[
  \frac{1}{r}=\frac{1}{2}-\frac{1}{p+1}
\]
and
\[
  S_{p+1}^2:=\inf\left\{\int \abs{\nabla u}^2+\abs{u}^2\mid \int_{\R^N} \abs{u}^{p+1}=1, \, u\in C^\infty_c(\R^N)\right\}
\]
is the Sobolev embedding constant. Moreover $c_{x_*}$ is a critical value of $\F_{x_*}$ (see, e.g., \cite{Willem}).

Critical points $v\in H^1(\R^N)$ such that $\F_{x_*}(v)=c_{x_*}$ are called \emph{ground states} of the equation
\begin{equation}\label{rad}
 -\Delta v + V(x_*) v =K(x_*) v^p \quad\text{in $\R^N$}.
\end{equation}
These ground states decay exponentially at infinity, i.e.,
\[
  v(x)\le C(1+\abs{x}^2)^{\frac{1-N}{4}}\exp(-\sqrt{V(x_*)}\abs{x}),
\]
with $C>0$ (see \cite[Proposition 4.1]{GNN}).
It is also known that, up to a translation, every positive ground state of \eqref{rad} is radial and radially decreasing, and that radial positive ground state is unique \cite{Kwong}.

A starting point in our consideration is a comparison between critical levels $c_\eps$ and $c_{x_*}$
for $\eps$ small and $x_*$ a local minimizer of $\A$.

\begin{lemma}\label{lemmaUpper}
If $(u_\eps)_{\eps>0}$ is a family of least energy solutions of \eqref{Pteps}, then
\[
 \limsup_{\eps \to 0}\eps^{-N}\mathcal{J}_\eps(u_\eps) \le \frac{(S_{p+1})^r}{r}  \inf_{\Lambda} \mathcal{A}.
\]
Moreover, there exists $C>0$ such that
\[
\|u_\eps\|_\eps\le C\eps^{N/2}.
\]
\end{lemma}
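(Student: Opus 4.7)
The strategy combines two ingredients. First, I would produce a mountain-pass upper bound on $c_\eps = \J_\eps(u_\eps)$ by constructing, for every small $\eps$, a competitor path in $\Gamma_\eps$ built from a rescaled, cut-off positive ground state of the autonomous limit problem centered at a near-minimizer of $\A$ on $\Lambda$. Second, I would upgrade this energy bound into the claimed norm bound through a Nehari-type identity, exploiting the slack encoded in the quadratic form inequality of Lemma~\ref{lemmaQuadratic} to absorb the penalization contribution coming from $\Lambda^c$.

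For the upper bound, I would fix $\eta > 0$, choose $x_* \in \Lambda$ with $\A(x_*) \le \inf_\Lambda \A + \eta$, and let $v \in H^1(\R^N)$ be a positive radial ground state of $-\Delta v + V(x_*) v = K(x_*) v^p$, so that $\F_{x_*}(v) = c_{x_*} = (S_{p+1})^r \A(x_*)/r$ and $v$ decays exponentially. Picking $\varphi \in C^\infty_c(\Lambda)$ with $\varphi \equiv 1$ in a neighborhood of $x_*$, I set
\[
w_\eps(x) := \varphi(x)\, v\!\left(\frac{x-x_*}{\eps}\right),
\]
so that $\mathrm{supp}(w_\eps) \subset \Lambda$ and hence $G_\eps(\cdot, s w_\eps) = K(s w_\eps)^{p+1}/(p+1)$. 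A change of variables $y = (x - x_*)/\eps$, combined with the continuity of $V$ and $K$ and the decay of $v$, should yield $\eps^{-N} \J_\eps(s w_\eps) \to \F_{x_*}(s v)$ as $\eps \to 0$, uniformly on compact $s$-intervals, the $\nabla\varphi$-contribution being an error of order $\eps^{N+2}$. Fixing $T$ large enough to make $\F_{x_*}(Tv) < 0$, the path $t \mapsto tTw_\eps$ will lie in $\Gamma_\eps$ for small $\eps$. Since $v$ is a ground state, $\max_{s\ge 0} \F_{x_*}(sv) = \F_{x_*}(v) = c_{x_*}$, which gives $\limsup_{\eps \to 0}\eps^{-N} c_\eps \le c_{x_*}$; letting $\eta \to 0$ delivers the first assertion.

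For the norm estimate, I would combine the identities $\J_\eps(u_\eps) = c_\eps$ and $\langle \J'_\eps(u_\eps), u_\eps\rangle = 0$ into
\[
(p+1)\, c_\eps \;=\; \frac{p-1}{2}\,\Norm{u_\eps}_\eps^2 \;+\; \int_{\R^N}\bigl(g_\eps(x, u_\eps)\, u_\eps - (p+1)\, G_\eps(x, u_\eps)\bigr).
\]
The integrand is nonnegative on $\Lambda$ by $(g_3)$ and, by $(g_4)$, bounded below on $\Lambda^c$ by $-\tfrac{p-1}{2}\eps^2 H u_\eps^2$. Since $\overline{B(0, \rho)} \subset \Lambda$, the pointwise bound $H(x) \le \alpha/\abs{x}^2$ with $\alpha := \kappa(\log(\rho/\rho_0))^{-(1+\beta)} < (N-2)^2/4$ holds on $\Lambda^c$; combining with the classical Hardy inequality yields
\[
\eps^2 \int_{\R^N} H\, u_\eps^2 \;\le\; \tau\, \Norm{u_\eps}_\eps^2, \qquad \tau := \frac{4\alpha}{(N-2)^2} \in (0, 1).
\]
Absorption then gives $\Norm{u_\eps}_\eps^2 \le \tfrac{2(p+1)}{(p-1)(1-\tau)}\, c_\eps = O(\eps^N)$.

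The step I expect to be the main obstacle is exactly this last absorption: without the strict inequality $\alpha < (N-2)^2/4$ built into the choice of $H$ in Section~\ref{s-Penalization}, the penalization contribution on $\Lambda^c$ could a priori consume the entire $\Norm{u_\eps}_\eps^2$ and destroy the $O(\eps^N)$ bound; this is precisely why the Hardy-type penalization, designed independently of the decay of $V$, is needed in the fast-decay regime. The remaining items (admissibility $w_\eps \in \D^1_V(\R^N)$, the scaling limit, and the fiber-wise maximum property $\max_{s \ge 0}\F_{x_*}(sv) = \F_{x_*}(v)$ for a ground state) are standard and can be checked routinely.
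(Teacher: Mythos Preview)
Your proof is correct and follows essentially the same approach as the paper: the upper bound via a rescaled, truncated ground state competitor is exactly the argument the paper invokes (identical to Lemma~12 of \cite{BVS}, since the test function is supported in $\Lambda$ where $H=0$ and hence $G_\eps = K s^{p+1}/(p+1)$), and for the norm bound the paper simply says it follows from the energy bound and $(g_3)$, which your Nehari-plus-Hardy-absorption argument makes precise. In fact $(g_4)$ together with Lemma~\ref{lemmaQuadratic} are indeed needed to control the $\Lambda^c$ contribution, as you correctly observe; the paper's terse ``follows from $(g_3)$'' should be read as shorthand for the same computation.
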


\begin{proof}
The proof of the first part is identical to the proof of Lemma 12 in \cite{BVS},
because all calculations are performed inside $\Lambda$ and do not depend on the choice of penalization.
The proof of the second statements follows from the first one and from $(g_3)$.
\end{proof}

\subsection{No uniform convergence to zero.}
An important pointwise information about the least energy solutions of \eqref{Pteps} is that $u_\eps$ can not converge uniformly to zero as $\eps\to 0$.

\begin{lemma}\label{l-delta}
If $u_\eps\in\D^1_V(\R^N)$ is a weak positive solution of \eqref{Pteps}, then
\[
  \Norm{u_\eps}_{L^\infty(\Lambda)} > \inf_{x \in \Lambda} \Bigl(\frac{V(x)}{K(x)}\Bigr)^\frac{1}{p-1}.
\]
\end{lemma}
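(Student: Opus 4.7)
The plan is to argue by contradiction. Suppose, against the claim, that $\Norm{u_\eps}_{L^\infty(\Lambda)} \le \bigl(\inf_{x\in\Lambda} V(x)/K(x)\bigr)^{1/(p-1)}$. Then pointwise on $\Lambda$ one has $K(x) u_\eps(x)^{p-1} \le V(x)$, and since $H \equiv 0$ on $\Lambda$ (by \eqref{H}), the definition \eqref{nonlin} reduces to $g_\eps(x, u_\eps) = K(x) u_\eps^p \le V(x) u_\eps$ on $\Lambda$. On the complement, the second summand in \eqref{nonlin} is bounded above by $\eps^2 H(x) u_\eps$ by construction of the truncation.

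The second step is to test the equation \eqref{Pteps} against $u_\eps$ itself. This is legitimate because $u_\eps \in \D^1_V(\R^N)$ and, by $(g_2)$, $(g_4)$ and the Hardy inequality, every term is integrable. Splitting the right hand side over $\Lambda$ and $\Lambda^c$ and inserting the two inequalities from the previous step gives
\[
\int_{\R^N}\bigl(\eps^2\abs{\nabla u_\eps}^2 + V u_\eps^2\bigr)\,dx \le \int_\Lambda V u_\eps^2\,dx + \int_{\Lambda^c} \eps^2 H u_\eps^2\,dx,
\]
and, after rearranging and using $H\equiv 0$ on $\Lambda$ to enlarge the domain of the $H$--integral,
\[
\eps^2 \int_{\R^N}\bigl(\abs{\nabla u_\eps}^2 - H u_\eps^2\bigr)\,dx + \int_{\Lambda^c} V u_\eps^2\,dx \le 0.
\]

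The third and final step is to invoke the Hardy-type estimate \eqref{kappa-form} of Lemma~\ref{lemmaQuadratic}: the first integral on the left dominates a strictly positive multiple of $\int_{\R^N} u_\eps^2/\abs{x}^2\,dx$. Since both integrals on the left are therefore nonnegative and their sum is nonpositive, one concludes $u_\eps \equiv 0$, contradicting the assumed positivity and forcing the strict inequality stated in the lemma.

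The only substantive ingredient is the strict positivity of the quadratic form associated with $-\Delta - H$, which is exactly what the penalization potential $H$ was engineered to provide; the rest is bookkeeping of the penalized nonlinearity. The one point that must be noticed is that $H$ vanishes on $\Lambda$, which is what allows the Hardy inequality to absorb the penalization contribution on $\Lambda^c$ and the Dirichlet energy on $\R^N$ simultaneously. I do not anticipate any real obstacle beyond correctly tracking these splittings.
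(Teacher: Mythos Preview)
Your proof is correct and follows essentially the same route as the paper's: both argue by contradiction, bound $g_\eps(x,u_\eps)$ by $Vu_\eps$ on $\Lambda$ and by $\eps^2 H u_\eps$ on $\Lambda^c$, and then use the positivity of the quadratic form $\int(\abs{\nabla u}^2 - H u^2)$ to force $u_\eps\equiv 0$. The only cosmetic difference is that the paper packages the final step through the comparison principle (Lemma~\ref{lemmaComparison}) applied with $v=0$, whereas you inline that argument by testing directly against $u_\eps$ and invoking Lemma~\ref{lemmaQuadratic}; since $u_\eps\ge 0$ these amount to the same computation.
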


\begin{proof}
Let $\delta$ denote the right-hand side of the inequality. By continuity and positivity of $V$ and $K$, $\delta > 0$.
Assume now by contradiction that $u_\eps$ is a positive solution of \eqref{Pteps} and that $u_\eps \le \delta$ on $\Lambda$.
Then for $x \in \Lambda$ one has
\[
 g_\eps(x, u_\eps(x)) \le \delta^{p-1} K(x) u_\eps(x) \le V(x) u_\eps(x).
\]
Therefore one has
\[
\begin{aligned}
 -\eps^2 \Delta u_\eps + V u_\eps &\le \chi_\Lambda V u_\eps + \eps^2 H u_\eps &&\text{on $\R^N$},
\end{aligned}
\]
and hence
\[
\begin{aligned}
 -\Delta u_\eps - H u_\eps &\le 0&&\text{on $\R^N$},
\end{aligned}
\]
Now, since $u_\eps \in \mathcal{D}^1_0(\R^N)$, Lemma~\ref{lemmaComparison}  is applicable.
One concludes that $u_\eps =0$, which brings a contradiction since $\J_\eps(u_\eps)=c_\eps > 0$ by Proposition~\ref{MPL}.
\end{proof}

\subsection{Lower estimate on the energy.}
Following \cite{BVS}, we can examine the behavior of least energy solutions $u_\eps$ along a sequence of points at which it does not vanish.

\begin{lemma}
\label{lemmaConcentration}
Let $(u_\eps)_{\eps > 0}$ be least energy solutions of \eqref{Pteps}.
Let $(\eps_n)_{n\ge 1}$ be a sequence that decreases to zero.
Let $K\ge 1$ and, for $i \in \{1, \dotsc, K\}$,
let $(x_n^i)_{n \ge 1}$ be a sequence in $\Lambda$.
If for every $i \in \{1, \dotsc, K\}$,
\[
  \liminf_{n \to \infty} u_{\eps_n}(x^i_n) > 0,
\]
and for every $i, j \in \{1, \dotsc, K\}$ such that $i \ne j$,
\[
 \lim_{n \to \infty} \frac{\abs{x^i_n-x^j_n}}{\eps_n} =+\infty,
\]
then
\[
  \liminf_{n \to \infty}  \eps_n^{-N}\mathcal{J}_{\eps_n}(u_{\eps_n}) \ge \lim_{n \to \infty} \sum_{i=1}^K \frac{S_{p+1}^r}{r} \mathcal{A}(x_n^i).
\]
\end{lemma}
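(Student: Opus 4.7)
The strategy is to blow up $u_{\eps_n}$ at each $x_n^i$, extract a non-trivial solution of the associated limit equation, and bound the total energy from below by the sum of the corresponding mountain-pass levels.

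For each $i$, set $v_n^i(y):=u_{\eps_n}(x_n^i+\eps_n y)$, which satisfies
\[
  -\Delta v_n^i + V(x_n^i+\eps_n y)\, v_n^i = g_{\eps_n}(x_n^i+\eps_n y,\, v_n^i) \quad\text{in $\R^N$.}
\]
The bound $\Norm{u_{\eps_n}}_{\eps_n}\le C\eps_n^{N/2}$ from Lemma~\ref{lemmaUpper}, combined with the change of variables, yields a uniform bound on $\int_{\R^N}(\abs{\nabla v_n^i}^2+V(x_n^i+\eps_n y)\abs{v_n^i}^2)\,dy$; the growth bounds $(g_1)$--$(g_2)$ together with Moser iteration upgrade this to a uniform $L^\infty_{\mathrm{loc}}$ bound, and $C^{1,\alpha}$ elliptic estimates then yield $v_n^i\to v^i$ in $C^1_{\mathrm{loc}}(\R^N)$ along a subsequence on which $x_n^i\to\bar x^i\in\overline{\Lambda}$. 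Since $v_n^i(0)=u_{\eps_n}(x_n^i)$ is bounded below, $v^i(0)>0$, so $v^i\not\equiv 0$. Because $\H\equiv 0$ on $\Lambda$ and, for each fixed $y$, the point $x_n^i+\eps_n y$ lies in $\Lambda$ for $n$ large, $g_{\eps_n}(x_n^i+\eps_n y,s)\to K(\bar x^i)s^p$, and passing to the limit shows $v^i$ is a nontrivial weak solution of $-\Delta v+V(\bar x^i)v=K(\bar x^i)v^p$ in $\R^N$. The usual Nehari/mountain-pass identity then gives $\F_{\bar x^i}(v^i)\ge c_{\bar x^i}=\tfrac{(S_{p+1})^r}{r}\A(\bar x^i)$.

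For the splitting, I use that $u_{\eps_n}$ is critical to rewrite
\[
  \J_{\eps_n}(u_{\eps_n})=\int_{\R^N}\Bigl(\tfrac12 g_{\eps_n}(x,u_{\eps_n})\,u_{\eps_n}-G_{\eps_n}(x,u_{\eps_n})\Bigr)\,dx,
\]
whose integrand is pointwise non-negative by $(g_3)$ and $(g_4)$. For any fixed $R>0$, the separation hypothesis makes the balls $B(x_n^i,\eps_n R)$ pairwise disjoint for $n$ large, so
\[
  \J_{\eps_n}(u_{\eps_n})\ge \sum_{i=1}^K\int_{B(x_n^i,\eps_n R)}\bigl(\tfrac12 g_{\eps_n}u_{\eps_n}-G_{\eps_n}\bigr)\,dx.
\]
On the portion of $B(x_n^i,\eps_n R)$ lying in $\Lambda$ the integrand equals $\tfrac{1}{r}K(x)u_{\eps_n}^{p+1}$; rescaling the $i$-th term to $B(0,R)$ and using strong $L^{p+1}$ convergence $v_n^i\to v^i$ on bounded sets gives
\[
  \liminf_{n\to\infty}\eps_n^{-N}\!\int_{B(x_n^i,\eps_n R)}\!\bigl(\tfrac12 g_{\eps_n}u_{\eps_n}-G_{\eps_n}\bigr)\,dx\ge \tfrac{1}{r}K(\bar x^i)\int_{B(0,R)}(v^i)^{p+1}\,dy.
\]
Letting $R\to\infty$ and using the Nehari identity for $v^i$ identifies the right-hand side with $\F_{\bar x^i}(v^i)\ge c_{\bar x^i}$; summing over $i$ and using continuity of $\A$ to replace $\A(\bar x^i)$ by $\lim_n\A(x_n^i)$ yields the claim.

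The most delicate point is ensuring that the rescaled balls $B(x_n^i,\eps_n R)$ eventually lie inside $\Lambda$ (where the penalization contributes nothing), which may fail if $\bar x^i\in\partial\Lambda$ and $\mathrm{dist}(x_n^i,\partial\Lambda)/\eps_n$ does not diverge. By non-negativity of the integrand one can always restrict to $B(x_n^i,\eps_n R)\cap\Lambda$ and still have a valid lower bound; a standard reflection/extension argument, or alternatively combining with the weak concentration statement (which places each $\bar x^i$ in the interior of $\Lambda$), then recovers the full mountain-pass level in the limit $R\to\infty$. The remaining steps are a routine adaptation of the corresponding argument in \cite{BVS}.
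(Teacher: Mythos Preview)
Your overall strategy---blow up at each $x_n^i$, extract a nontrivial limit $v^i$, and split the energy via the pointwise-nonnegative density $\tfrac12 g_\eps u-G_\eps$---is exactly the approach of Proposition~16 in \cite{BVS}, to which the paper defers. The rescaling, compactness, and ball-separation steps are fine.

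The gap is in your handling of the boundary case $\bar x^i\in\partial\Lambda$. First, your claim that the limit $v^i$ solves $-\Delta v+V(\bar x^i)v=K(\bar x^i)v^p$ on all of $\R^N$ is only justified when $\bar x^i$ lies in the interior of $\Lambda$; if $\bar x^i\in\partial\Lambda$ and $d(x_n^i,\partial\Lambda)/\eps_n$ stays bounded, the rescaled domains $(\Lambda-x_n^i)/\eps_n$ converge to a half-space $H^+$, and since $g_{\eps_n}\to 0$ pointwise outside $\Lambda$ (the term $\eps_n^2 \H$ vanishes), the limit equation is $-\Delta v^i+V(\bar x^i)v^i=\chi_{H^+}K(\bar x^i)(v^i)^p$. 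Second, and more seriously, your proposed remedy of invoking ``the weak concentration statement (which places each $\bar x^i$ in the interior of $\Lambda$)'' is circular: that statement is Lemma~\ref{Prop-A}, whose proof uses precisely the present lemma applied with a single sequence approaching $\partial\Lambda$. The ``reflection/extension'' alternative is not the right tool either, since reflecting $v^i$ across $\partial H^+$ does not produce a solution of the autonomous equation.

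The correct argument, which is what \cite{BVS} does, handles the half-space case directly via a Nehari scaling. Testing the limit equation against $v^i$ (once one has checked $v^i\in H^1(\R^N)$, which follows from the uniform gradient bound together with $V(\bar x^i)>0$ in the limit equation) gives
\[
  \lVert v^i\rVert^2_{H^1_{V(\bar x^i)}}=K(\bar x^i)\int_{H^+}(v^i)^{p+1}\le K(\bar x^i)\int_{\R^N}(v^i)^{p+1},
\]
so the Nehari parameter $t^*$ for $\F_{\bar x^i}$ along $v^i$ satisfies $t^*\le 1$, and therefore
\[
  c_{\bar x^i}\le \F_{\bar x^i}(t^*v^i)=\tfrac{1}{r}(t^*)^2\lVert v^i\rVert^2_{H^1_{V(\bar x^i)}}\le \tfrac{1}{r}\lVert v^i\rVert^2_{H^1_{V(\bar x^i)}}=\tfrac{1}{r}K(\bar x^i)\int_{H^+}(v^i)^{p+1},
\]
which is exactly the lower bound you obtain from the energy splitting restricted to $\Lambda$. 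With this correction in place, the rest of your argument goes through.
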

\begin{proof}
This lemma is proved similarly to Proposition~16 in \cite{BVS}. The only difference is that the penalization is not the same, and that $V$ may vanish. However, the modified penalization is stronger, and $V$ does not vanish on a neighborhood of $\Lambda$. Therefore, the proof of \cite{BVS} applies straightforwardly, provided the intermediate lemmas are restated by adding conditions that sequences of points are taken in $\Lambda$.
\end{proof}

As a consequence of Lemma~\ref{lemmaConcentration}, we prove that least energy solutions $u_\eps$
concentrate around a family of points inside $\Lambda$. This is a first crude concentration result
which will be the starting point to finer concentration estimates.

\begin{lemma}\label{Prop-A}
Let $(u_\eps)_{\eps > 0}$ be least energy solutions of \eqref{Pteps}.
Let $(x_\eps)_{\eps > 0}$ be such that $x_\eps \in \Lambda$ and
\[
 \liminf_{\eps \to 0} u_\eps(x_\eps) > 0.
\]
Then
\[\liminf_{\eps \to 0} d(x_{\eps}, \partial \Lambda)> 0.\]
and
\begin{equation}\label{delta-bound}
  \lim_{\substack{\eps \to 0\\ R \to \infty}} \Norm{u_\eps}_{L^\infty(\Lambda \setminus B(x_\eps, \eps R))} =0.
\end{equation}
\end{lemma}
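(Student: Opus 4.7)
The plan is to argue both statements by contradiction, in each case extracting a configuration of points in $\Lambda$ satisfying the hypotheses of Lemma~\ref{lemmaConcentration} that produces an energy lower bound strictly larger than $\tfrac{S_{p+1}^r}{r}\inf_\Lambda\mathcal{A}$, contradicting the upper estimate of Lemma~\ref{lemmaUpper}. The strict gap required to reach a contradiction is supplied by hypothesis~$(\mathcal{A})$. No new ingredients beyond these two lemmas, the continuity of $\mathcal{A}$ on $\overline{\Lambda}$, and the positivity of $u_\eps$ are needed.

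For the first conclusion, suppose toward a contradiction that $\liminf_{\eps\to 0} d(x_\eps, \partial\Lambda) = 0$. Along a sequence $\eps_n \to 0$ one has $d(x_{\eps_n}, \partial\Lambda) \to 0$; since $\overline{\Lambda}$ is compact I may further extract a subsequence along which $x_{\eps_n} \to \bar x \in \partial\Lambda$. Applying Lemma~\ref{lemmaConcentration} with $K = 1$ and the single sequence $(x_{\eps_n})$ --- whose hypotheses are exactly the standing assumption on $(x_\eps)$ --- and using the continuity of $\mathcal{A}$, I obtain
\[
\liminf_{n\to\infty} \eps_n^{-N}\mathcal{J}_{\eps_n}(u_{\eps_n}) \ge \frac{S_{p+1}^r}{r}\mathcal{A}(\bar x) \ge \frac{S_{p+1}^r}{r}\inf_{\partial\Lambda}\mathcal{A} > \frac{S_{p+1}^r}{r}\inf_{\Lambda}\mathcal{A},
\]
where the last strict inequality is $(\mathcal{A})$. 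This contradicts Lemma~\ref{lemmaUpper}.

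For the second conclusion, suppose \eqref{delta-bound} fails. Then there exist $\delta>0$, sequences $\eps_n \to 0$ and $R_n \to \infty$, and, using continuity of $u_{\eps_n}$, points $y_n \in \Lambda\setminus B(x_{\eps_n}, \eps_n R_n)$ with $u_{\eps_n}(y_n) \ge \delta$. Along a further subsequence I may assume $\mathcal{A}(x_{\eps_n}) \to a_1$ and $\mathcal{A}(y_n) \to a_2$ in $(0, +\infty]$, both $\ge \inf_\Lambda \mathcal{A}$. Both sequences lie in $\Lambda$, both satisfy the nonvanishing hypothesis, and $\abs{x_{\eps_n} - y_n}/\eps_n \ge R_n \to \infty$, so Lemma~\ref{lemmaConcentration} with $K=2$ yields
\[
\liminf_{n\to\infty} \eps_n^{-N}\mathcal{J}_{\eps_n}(u_{\eps_n}) \ge \frac{S_{p+1}^r}{r}(a_1 + a_2) \ge 2\,\frac{S_{p+1}^r}{r}\inf_\Lambda \mathcal{A},
\]
again contradicting Lemma~\ref{lemmaUpper}. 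The one point requiring a little care is ensuring that all extracted points remain inside $\Lambda$ so that Lemma~\ref{lemmaConcentration} is applicable; this is automatic from the way the two failure hypotheses are formulated (the $x_\eps$ lie in $\Lambda$ by assumption, and the $y_n$ by construction from the $L^\infty(\Lambda\setminus B(x_{\eps_n}, \eps_n R_n))$-norm).
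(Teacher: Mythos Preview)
Your proof is correct and follows essentially the same route as the paper's own proof: both assertions are argued by contradiction, applying Lemma~\ref{lemmaConcentration} with $K=1$ for the first and $K=2$ for the second, and comparing the resulting lower bound with the upper bound of Lemma~\ref{lemmaUpper}, the strict gap coming from hypothesis~$(\mathcal{A})$ in the first case and from $\inf_\Lambda\mathcal{A}>0$ in the second. Your extraction of convergent subsequences for $\mathcal{A}(x_{\eps_n})$ and $\mathcal{A}(y_n)$ is a slightly more careful way to handle the right-hand side of Lemma~\ref{lemmaConcentration} (stated with a $\lim$), but this is cosmetic.
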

\begin{proof}
For the first assertion, assume by contradiction that there exists a sequence $(\eps_n)_{n \ge 1}$ such that $\eps_n\to 0$, and $\lim_{n \to \infty} d(x_{\eps_n}, \partial\Lambda)=0$.
Then, by Lemma~\ref{lemmaConcentration},
\[
S_p \inf_{\Lambda} \mathcal{A}\ge \liminf_{n \to \infty} \eps_n^{-N} \mathcal{J}_{\eps_n}(u_{\eps_n})\ge S_p \liminf_{n \to \infty} \mathcal{A}(x_{\eps_n}) \ge S_p \inf_{\partial \Lambda} \mathcal{A} > S_p \inf_{\Lambda} \mathcal{A}.
\]
But this contradicts Lemma~\ref{lemmaUpper}.

For the second assertion, assume by contradiction that there exist sequences $(\eps_n)_{n \ge 1}$ and $(y_n)_{n \ge 1}$ such that $y_n \in \Lambda$,
\begin{align*}
  \lim_{n \to \infty} \eps_n&=0, &
  u_{\eps_n}(y_n) &\ge \delta, & &\text{and}&
  \lim_{n \to \infty} \frac{\abs{x_{\eps_n}-y_n}}{\eps} &=+\infty.
\end{align*}
Then, by Lemma~\ref{lemmaConcentration},
\[
  \liminf_{n \to \infty} \eps_n^{-N} \mathcal{J}_{\eps_n}(u_{\eps_n})\ge S_p \liminf_{n \to \infty} \big(\mathcal{A}(x_{\eps_n})+\mathcal{A}(y_n)\big) \ge 2 S_p  \inf_{\Lambda} \mathcal{A}.
\]
Since $\inf_{\Lambda} \mathcal{A}> 0$, one obtains again a contradiction by Lemma~\ref{lemmaUpper}.
\end{proof}

\subsection{Convergence of rescaled solutions.}
A consequence of Lemma~\ref{lemmaConcentration} and the upper bound of Lemma~\ref{lemmaUpper} is that a sequence of least energy solutions,
rescaled along a sequence of points at which it does not vanish, converges to a solution of the limit equation.
Following the arguments in the proof of Lemma~13 and Proposition~18 of \cite{BVS},
one can establish the following.

\begin{lemma}\label{lemma-conv}
Let $(u_\eps)_{\eps > 0}$ be least energy solutions of \eqref{Pteps}.
Let $(\eps_n)_{n\ge 1}$ be a sequence that decreases to zero,
and $(x_n)_{n\ge 1}\subset\Lambda$ be a sequence such that
\[\liminf_{n \to \infty} u_{\eps_n}(x_{n}) > 0\quad\text{and}\quad x_n\to\bar x\in\Lambda.\]
Then $\A(\bar x)=\inf_\Lambda\A$ and the sequence of rescaled solutions
\[
  v_n(x):=u_{\eps_n}(x_n+\eps x)
\]
converges in $C^1_\mathrm{loc}(\R^N)$ to a positive solution $v\in H^1(\R^N)\cap C^1(\R^N)$ of the limiting equation
\[
  -\Delta v+V(\bar x)v=K(\bar x) v^p \quad\text{in $\R^N$}.
\]
\end{lemma}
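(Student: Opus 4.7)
I would split the proof into three pieces: identify $\bar x$ as a minimizer of $\A$, derive uniform local estimates for the rescaled family, and pass to the limit in the PDE.

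For the first identity, I would apply Lemma~\ref{lemmaConcentration} with a single sequence $(x_n)$, combined with Lemma~\ref{lemmaUpper}, in the sandwich
\[
 \frac{S_{p+1}^r}{r}\,\A(\bar x) \;\le\; \liminf_{n\to\infty}\eps_n^{-N}\J_{\eps_n}(u_{\eps_n}) \;\le\; \frac{S_{p+1}^r}{r}\inf_{\Lambda}\A,
\]
where the left inequality uses continuity of $\A$ to replace $\lim\A(x_n)$ by $\A(\bar x)$. Since $\bar x\in\Lambda$, this forces $\A(\bar x)=\inf_\Lambda\A$.

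For the rescaling, Lemma~\ref{Prop-A} gives $\liminf_n d(x_n,\partial\Lambda)>0$, so for every fixed $R>0$ and $n$ large the ball $B(x_n,\eps_n R)$ lies in $\Lambda$. On such a ball $g_{\eps_n}(y,s)=K(y)s^p$, so $v_n(x):=u_{\eps_n}(x_n+\eps_n x)$ satisfies
\[
 -\Delta v_n + V(x_n+\eps_n x)\,v_n = K(x_n+\eps_n x)\,v_n^p \qquad\text{in } B(0,R).
\]
Rescaling the bound $\Norm{u_{\eps_n}}_{\eps_n}=O(\eps_n^{N/2})$ from Lemma~\ref{lemmaUpper} yields a uniform global estimate $\int_{\R^N}(\abs{\nabla v_n}^2+V(x_n+\eps_n x)v_n^2)\,dx\le C$. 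Since $\A(\bar x)>0$ forces $V(\bar x)>0$, continuity of $V$ turns this into uniform $H^1(B(0,R))$ bounds for each $R$. A Moser / $L^q$-bootstrap in the subcritical semilinear equation then produces uniform $L^\infty_\mathrm{loc}$ bounds, and $W^{2,q}_\mathrm{loc}$ together with Schauder estimates upgrade these to uniform $C^{1,\alpha}_\mathrm{loc}$ bounds. Arzel\`a--Ascoli with a diagonal extraction delivers a subsequence $v_n\to v$ in $C^1_\mathrm{loc}(\R^N)$.

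Finally, passing to the limit in the rescaled PDE using continuity of $V$ and $K$ shows that $v$ solves the limiting equation; Fatou applied to the uniform energy bound yields $v\in H^1(\R^N)$. Nontriviality follows from $v(0)=\lim v_n(0)=\liminf_n u_{\eps_n}(x_n)>0$, and strict positivity from the strong maximum principle applied to the nonnegative limit. The main obstacle is ensuring that the rescaled equation genuinely converges to the clean limiting equation on all of $\R^N$ rather than on a shrinking set, and that the $H^1$ bound survives despite $V$ being possibly small or vanishing far away; both points are resolved by Lemma~\ref{Prop-A}, which places $\bar x$ strictly inside $\Lambda$ so the penalization term $g_{\eps_n}$ disappears on every fixed compact set and $V$ stays bounded below near $\bar x$.
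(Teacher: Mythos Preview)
Your outline is correct and is precisely the standard argument the paper has in mind: the paper gives no proof here, only a pointer to Lemma~13 and Proposition~18 of \cite{BVS}, and what you wrote is an accurate sketch of that argument --- the sandwich via Lemmas~\ref{lemmaUpper} and~\ref{lemmaConcentration}, the rescaled energy bound, elliptic bootstrap to $C^{1,\alpha}_{\mathrm{loc}}$, Arzel\`a--Ascoli, and passage to the limit.

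Two small remarks. First, you invoke Lemma~\ref{Prop-A} to place $B(x_n,\eps_n R)\subset\Lambda$, but this is already immediate from the hypothesis $x_n\to\bar x\in\Lambda$ with $\Lambda$ open; Lemma~\ref{Prop-A} is not needed here. Second, your argument (correctly) produces only \emph{subsequential} $C^1_{\mathrm{loc}}$ convergence via compactness, whereas the lemma as stated speaks of the full sequence; the paper's own discussion in Section~\ref{sectAssumptions} in fact inserts the qualifier ``along a subsequence'', so the lemma's phrasing is slightly loose and your reading is the accurate one.
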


In particular, using \eqref{delta-bound},
we conclude from Lemma~\ref{lemma-conv} that
\[
  \sup_{\eps>0}\|u_\eps\|_{L_\infty(\R^N)}<\infty.
\]

\section{Barrier functions and solutions of the original problem}\label{s-Barriers}

In this section we introduce barrier functions which will be used to obtain
sharp decay estimates on the least energy solutions $(u_\eps)$, and hence to show that
$(u_\eps)$ indeed solves the original problem \eqref{Peps}.

\subsection{Linear inequations outside small balls.}
Let $u_\eps\in\D^1_V(\R^N)$ be a nonnegative solution of \eqref{Pteps}.
Then, according to the construction of the penalized nonlinearity, $u_\eps$
is a subsolution of the original problem \eqref{Peps}, i.e.,
\[
-\eps^2\Delta u_\eps + V u_\eps \le K u_\eps^p \quad\text{in $\R^N$}.
\]
At the same time $u_\eps$ satisfies the linear inequation
\begin{equation}\label{L-sub}
-\eps^2\Delta u_\eps - \eps^2 H u_\eps +V u_\eps\le 0 \quad\text{in $\Lambda^c$}.
\end{equation}
The next lemma shows that a slightly weaker inequation holds outside small balls
centered around a sequence of points at which $u_\eps$ does not vanish.

\begin{lemma}\label{Prop-36}
Let $(u_\eps)_{\eps > 0}$ be least energy solutions of \eqref{Pteps}.
Let $(x_\eps)_{\eps > 0}$ be such that $x_\eps \in \Lambda$ and
\[
 \liminf_{\eps \to 0} u_\eps(x_\eps) > 0.
\]
For every $\nu\in(0, 1)$, there exists $\eps_0>0$ and $R>0$ such that for all $\eps\in(0, \eps_0)$,
\begin{equation}\label{e-36}
-\eps^2\Delta u_\eps-\eps^2 H u_\eps + (1-\nu)V u_\eps\le 0 \quad\text{in $\R^N\setminus B(x_\eps, \eps R)$}.
\end{equation}
\end{lemma}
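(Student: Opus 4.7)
My plan is to derive the inequation directly from the equation $-\eps^2\Delta u_\eps + Vu_\eps = g_\eps(x,u_\eps)$ satisfied pointwise a.e.\ by $u_\eps$ (since $u_\eps\in W^{2,q}_{\mathrm{loc}}$ by the regularity comment after Proposition~\ref{MPL}). Subtracting $\eps^2 H u_\eps + \nu V u_\eps$ from both sides, the desired inequality \eqref{e-36} is equivalent to the pointwise estimate
\[
 g_\eps(x, u_\eps(x)) - \eps^2 H(x) u_\eps(x) \le \nu V(x) u_\eps(x)
 \quad\text{for a.e.\ } x\in\R^N\setminus B(x_\eps, \eps R).
\]
I will verify this by splitting the complement of the small ball into the two natural regions $\Lambda^c$ and $\Lambda\setminus B(x_\eps,\eps R)$.

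On $\Lambda^c$, the claim is immediate from the very construction of the penalization \eqref{nonlin}: by definition $g_\eps(x,s)=\min(\eps^2 H(x),K(x)s^{p-1})\,s\le \eps^2 H(x) s$ there, so $g_\eps(x,u_\eps)-\eps^2 Hu_\eps\le 0\le \nu V u_\eps$ since $V\ge 0$. Note this part does not require $\eps$ small or any restriction on $R$; in fact it reproduces \eqref{L-sub}.

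The substantive part is the region $\Lambda\setminus B(x_\eps,\eps R)$. There $H\equiv 0$ (because $H$ is supported on $\Lambda^c$) and $g_\eps(x,u_\eps)=K(x)u_\eps^p$, so the required inequality collapses to
\[
 K(x)u_\eps(x)^{p-1}\le \nu V(x).
\]
Assumption $(\mathcal{A})$ forces $V>0$ on $\overline{\Lambda}$ (otherwise $\mathcal{A}$ would vanish somewhere on $\overline\Lambda$, violating $\inf_\Lambda\mathcal{A}<\inf_{\partial\Lambda}\mathcal{A}$), and $K$ is bounded above on the compact set $\overline{\Lambda}$ by $(K)$, so $m:=\inf_{\overline\Lambda} V/K>0$. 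It therefore suffices to make $\|u_\eps\|_{L^\infty(\Lambda\setminus B(x_\eps,\eps R))}^{p-1}\le \nu m$, and this is exactly what Lemma~\ref{Prop-A} delivers: by \eqref{delta-bound} one may choose $\eps_0$ small and $R$ large so that this sup is as small as we wish for all $\eps\in(0,\eps_0)$, in particular smaller than $(\nu m)^{1/(p-1)}$.

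Combining the two cases gives \eqref{e-36} pointwise a.e.\ on $\R^N\setminus B(x_\eps,\eps R)$, hence in the weak sense, which completes the argument. There is no real obstacle here beyond correctly identifying the two regimes: the potential $H$ was engineered in Section~\ref{s-Penalization} precisely so that the outside estimate is automatic, and Lemma~\ref{Prop-A} was the crude concentration statement prepared to handle the inside estimate.
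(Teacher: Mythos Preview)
Your proof is correct and follows essentially the same approach as the paper: split into $\Lambda^c$ (where $(g_4)$ gives the inequality for free) and $\Lambda\setminus B(x_\eps,\eps R)$ (where $H\equiv 0$, $g_\eps=K u_\eps^p$, and Lemma~\ref{Prop-A} makes $u_\eps$ small enough that $K u_\eps^{p-1}\le \nu V$). Two cosmetic remarks: the positivity of $V$ on $\overline\Lambda$ follows from $0<\inf_\Lambda\mathcal{A}$ and $\inf_{\partial\Lambda}\mathcal{A}>0$ rather than from the strict inequality between them, and boundedness of $K$ on $\overline\Lambda$ needs only continuity, not assumption~\eqref{a-K}.
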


\begin{proof}
Set
\begin{equation}\label{delta-0}
 \delta_0:=\inf_{x \in \Lambda} \Bigl(\nu\frac{V(x)}{K(x)}\Bigr)^\frac{1}{p-1}.
\end{equation}
Since $V$ and $K$ are continuous and $V$ does not vanish on $\Bar{\Lambda}$, $\delta_0 > 0$.
By Lemma~\ref{Prop-A}, there exists $\eps_0>0$ and $R>0$ such that for all $\eps\in(0, \eps_0]$ one has
\begin{equation}\label{A}
u_\eps(x)\le \delta_0\quad\text{for all}\quad x\in \Lambda\setminus B(x_\eps, \eps R).
\end{equation}
Hence,
\[
  -\eps^2\Delta u_\eps + (1-\nu)V(x)u_\eps\le  -\eps^2\Delta u_\eps + \bigl( V-Ku_\eps^{p-1}\bigr)u_\eps=0
\quad\text{in $\Lambda\setminus B(x_\eps, \eps R)$}.
\]
Further, \eqref{L-sub} implies that $u_\eps$ satisfies the desired inequality in $\Lambda^c$,
which completes the proof.
\end{proof}

\subsection{Barrier functions}
Lemma~\ref{Prop-36} suggests that one can obtain upper bounds for the family of least energy solutions $(u_\eps)$
by comparing them with appropriate supersolutions.
Following \cite{BVS}, we introduce suitable barrier functions.

\begin{lemma}\label{l-barrier}
Let $(x_\eps)_\eps\subset\Lambda$ be such that $\liminf_{\eps\to 0} d(x_\eps, \partial \Lambda)>0$, let $\nu\in(0, 1)$ and let $R > 0$. Then, there exists $\eps_0 > 0$ and a family of functions $(W_\eps)_{0 < \eps < \eps_0}$ in $C^{1, 1}(\R^N \setminus B(x_\eps, \eps R))$ such that, for $\eps \in (0, \eps_0)$,
\begin{enumerate}[i)]
 \item $W_{\eps}$ satisfies the inequation
\[
-\eps^2\Delta W_{\eps}-\eps^2 H W_{\eps}+(1-\nu)V W_{\eps}\ge 0
\quad\text{in $\R^N\setminus B(x_\eps, \eps R)$},
\]
 \item $\nabla W_\eps\in L^2(\R^N\setminus B(x_\eps, \eps R))$,
 \item $W_\eps= 1$ on $\partial B(x_\eps, \eps R)$.
 \item for every $x \in \R^N \setminus B(x_\eps, \eps R)$,
\[
  W_\eps(x) \le C\exp\Big(-\frac{\lambda}{\eps}\frac{|x-x_\eps|}{1+|x-x_\eps|}\Big)\, \big(1+\abs{x}^2\big)^{-\frac{N-2}{2}}.
\]
\end{enumerate}
\end{lemma}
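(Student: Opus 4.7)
My plan is to build $W_\eps$ as a product
\[
W_\eps(x) := A_\eps\, \eta_\eps(x)\, \omega_0(x),
\]
where $\eta_\eps$ carries the $\eps$-scale exponential decay near $x_\eps$ but \emph{saturates to a fixed positive constant before leaving $\Lambda$}, and $\omega_0$ is a Hardy-type positive supersolution carrying the $|x|^{-(N-2)}$ decay at infinity. Using $\liminf_{\eps\to 0} d(x_\eps,\partial\Lambda)>0$, fix $R_\eta<R_\eta'<d(x_\eps,\partial\Lambda)$ uniformly in small $\eps$, so $\overline{B(x_\eps, R_\eta')}\subset\Lambda$ and on this ball $V\geq V_*:=\inf_{\overline{\Lambda}}V>0$ and $H\equiv 0$. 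Choose $\phi\in C^2([0,\infty))$ with $\phi(r)=r/(1+r)$ for $r\leq R_\eta$ and $\phi(r)\equiv c^*$ for $r\geq R_\eta'$ (some fixed $c^*\in(R_\eta/(1+R_\eta),1)$), monotone in between with $|\phi'|,|\phi''|$ uniformly bounded; then $\phi\geq\kappa\tilde\phi$ globally for a fixed $\kappa\in(0,1)$, where $\tilde\phi(r):=r/(1+r)$. Set $\eta_\eps(x):=\exp(-(\lambda'/\eps)\phi(|x-x_\eps|))$. For $\omega_0$, take the explicit supersolution $W$ of formula \eqref{w-eps} on $\{|x|\geq R_0\}$ (with $R_0$ chosen so $\overline{\Lambda}\subset B(0,R_0)$), extended smoothly and positively to $\R^N$; this gives $-\Delta\omega_0-H\omega_0\geq 0$ on $\{|x|\geq R_0\}$ and $\omega_0\asymp(1+|x|^2)^{-(N-2)/2}$ globally. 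Pick $A_\eps>0$ so that $W_\eps\equiv 1$ on $\partial B(x_\eps,\eps R)$; since $\phi(\eps R)\to 0$ and $\omega_0$ is continuous and positive near $x_\eps$, $A_\eps$ stays bounded and bounded away from zero as $\eps\to 0$.

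The central step is verifying (i). Using the product rule
\[
\frac{-\eps^2\Delta(\eta_\eps\omega_0)-\eps^2 H\eta_\eps\omega_0}{\eta_\eps\omega_0}=\frac{-\eps^2\Delta\eta_\eps}{\eta_\eps}-\frac{2\eps^2\nabla\eta_\eps\cdot\nabla\omega_0}{\eta_\eps\omega_0}+\frac{-\eps^2\Delta\omega_0-\eps^2 H\omega_0}{\omega_0},
\]
together with $-\eps^2\Delta\eta_\eps/\eta_\eps=\eps\lambda'(\phi''(r)+\tfrac{N-1}{r}\phi'(r))-(\lambda')^2\phi'(r)^2$ for $r:=|x-x_\eps|$, I would split into two regions. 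On the inner region $B(x_\eps,R_\eta')\setminus B(x_\eps,\eps R)\subset\Lambda$ (where $V\geq V_*$ and $H\equiv 0$), the $\eta_\eps$-derivative contributions are bounded below by $-(\lambda')^2-O(\eps+1/R)$, the cross term is $O(\eps\lambda')$, and $-\eps^2\Delta\omega_0/\omega_0=O(\eps^2)$; choosing $(\lambda')^2$ and $\lambda'/R$ each at most $(1-\nu)V_*/8$ and then $\eps_0$ small, all negative contributions are absorbed by $(1-\nu)V\geq(1-\nu)V_*$. On the outer region $\{r\geq R_\eta'\}$, $\phi'\equiv 0\equiv\phi''$, so both the $\eta_\eps$- and cross-terms vanish; the inequation reduces to the supersolution property of $\omega_0$, which holds by construction on $\Lambda^c$, and is trivial on the bounded remainder $\Lambda\cap\{r\geq R_\eta'\}$ where $V\geq V_*$ absorbs an $O(\eps^2)$-error for $\eps$ small.

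Properties (ii)--(iv) are then essentially routine. For (ii), $\nabla W_\eps=A_\eps(\eta_\eps\nabla\omega_0+\omega_0\nabla\eta_\eps)$ is in $L^2$ because $\nabla\eta_\eps$ is uniformly bounded and compactly supported in $\overline{B(x_\eps,R_\eta')}$ while $\nabla\omega_0\in L^2(\R^N)$ by Lemma~\ref{lemmaMinimalSolution}. Property (iii) holds by the choice of $A_\eps$. For (iv), $\phi\geq\kappa\tilde\phi$ gives $\eta_\eps\leq\exp(-(\kappa\lambda'/\eps)\tilde\phi(|x-x_\eps|))$, so setting $\lambda:=\kappa\lambda'$ and combining with $\omega_0\leq C(1+|x|^2)^{-(N-2)/2}$ yields the claimed bound. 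The main technical obstacle is (i) on the outer region: since $V$ may vanish outside $\Lambda$ (allowed by the assumptions, e.g.\ compactly supported $V$), the term $-(\lambda')^2\phi'(r)^2$ generated by a na\"{\i}vely exponentially-decaying $\eta_\eps$ could be absorbed by neither $V$ nor the $O(\eps^2)$ Hardy margin of $\omega_0$; saturating $\phi$ \emph{inside} $\Lambda$, where $V$ is still bounded below, kills $\phi'$ before one enters any region where $V$ vanishes, and precisely resolves this difficulty.
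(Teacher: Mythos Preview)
Your multiplicative construction $W_\eps=A_\eps\,\eta_\eps\,\omega_0$ is a genuinely different route from the paper's: the paper builds $w_\eps$ \emph{piecewise}, gluing the radial profile $\gamma_\eps(y)=\cosh\bigl(\mu(r-|y|)/\eps\bigr)$ on $B(x_\eps,r)$ to the extended minimal solution $\tilde w$ of Lemma~\ref{lemmaMinimalSolution} on $B(x_\eps,r)^c$, and then normalizes. The piecewise approach avoids the cross-term $\nabla\eta_\eps\cdot\nabla\omega_0$ entirely; your product approach trades that simplification for a single globally defined function. Both strategies can be pushed through, and your diagnosis of the main difficulty---that any exponential factor must stop decaying \emph{before} one exits $\Lambda$, because $V$ may vanish on $\Lambda^c$---is exactly right.

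There is, however, a genuine gap in your outer-region analysis. You take $\omega_0$ equal to the explicit $W$ of \eqref{w-eps} only on $\{|x|\ge R_0\}$ with $\overline{\Lambda}\subset B(0,R_0)$, extended smoothly elsewhere, and then claim that ``the supersolution property of $\omega_0$ holds by construction on $\Lambda^c$''. But $\Lambda^c\cap B(0,R_0)$ is a nonempty bounded set on which $\omega_0$ is merely a smooth positive extension; nothing forces $-\Delta\omega_0-H\omega_0\ge 0$ there. On this set $H>0$ (we are outside $\Lambda$) while $V$ may vanish identically (this is the whole point of the paper), so the error cannot be absorbed by $(1-\nu)V$ either, no matter how small $\eps$ is. The fix is to replace the explicit $W$ of \eqref{w-eps} by the minimal positive solution $w$ of Lemma~\ref{lemmaMinimalSolution}, which satisfies $-\Delta w-Hw=0$ on \emph{all} of $\Lambda^c$ with the correct $|x|^{-(N-2)}$ asymptotics, and extend it smoothly into $\Lambda$; this is precisely the $\tilde w$ used in the paper's proof.

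A smaller point: to get $W_\eps\equiv 1$ on $\partial B(x_\eps,\eps R)$ by a single scalar $A_\eps$, you need $\omega_0$ to be constant on that sphere. Since $\omega_0$ is only specified as ``a smooth positive extension'' inside $\Lambda$, you should say explicitly that the extension is chosen constant on $\{x:d(x,\Lambda^c)\ge r\}$ for some fixed $r<\liminf_{\eps\to 0}d(x_\eps,\partial\Lambda)$ (again, exactly as the paper does for $\tilde w$); otherwise (iii) fails as stated.
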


In the language of \cite{BVS}, the first three properties mean that $(W_\eps)_{\eps > 0}$ is a \emph{family of barriers functions}.

\begin{proof}
Fix $\mu > 0$ so that
\[
 \mu^2 < (1-\nu) \inf_\Lambda V
\]
and choose $r$ such that
\[
 0 < r <\frac{1}{2}\liminf_{\eps \to 0}d(x_\eps, \partial \Lambda)
\]
Define for $y \in \R^N$,
\begin{equation}
\label{eqgamma}
  \gamma_\eps(y)= \cosh \frac{\mu(r-\abs{y})}{\eps}.
\end{equation}
One has on $B(0, r)$,
\[
 -\eps^2 \Delta \gamma_\eps +\mu^2 \gamma_\eps \ge 0.
\]
Let $w \in C^2(\Lambda^c)$ be the minimal positive solution to $-\Delta-H$ in $\Lambda^c$, given by Lemma~\ref{lemmaMinimalSolution}.
Let $\Tilde{w} \in C^2(\R^N)$ be a positive extension such that $\Tilde{w}(x)=1$ if
$d(x, \Lambda^c)\ge r$.
Set now
\begin{equation}
\label{eqw}
  w_\eps (x)=
\begin{cases}
 \gamma_\eps(x-x_\eps) & \text{if $x \in B(x_\eps, r)$}, \smallskip\\
 \Tilde{w}(x) & \text{if $x \in B(x_\eps, r)^c$}.
\end{cases}
\end{equation}
Now, if $\eps$ is small, $B(x_\eps, 2r) \subset \Lambda$, so that $w_\eps\in C^{1, 1}(\R^N)$.
Moreover, in $B(x_\eps, r)\setminus\{x_\eps\}$ we have
\[
 -\eps^2 \Delta w_\eps-\eps^2 H w_\eps+(1-\nu)V w_\eps \ge -\eps^2 \Delta \gamma_\eps+(1-\nu)(\inf_\Lambda V) \gamma_\eps \ge 0.
\]
One also has in $\Lambda \setminus \overline{B(x_\eps, r)}$
\[
 -\eps^2 \Delta w_\eps-\eps^2 H w_\eps+(1-\nu)V w_\eps =-\eps^2 \Delta \Tilde{w}+(1-\nu)V\Tilde{w} \ge 0,
\]
for $\eps>0$ small enough, since $\Tilde{w}>0$ and $V>0$ on $\overline{\Lambda}$.
On the other hand,  one has in $\Bar{\Lambda}^c$
\[
 -\eps^2\Delta w_\eps -\eps^2 H w_\eps+(1-\nu) V w_\eps =\eps^2(-\Delta w-Hw)+(1-\nu) V w \ge 0,
\]
since $w>0$ and solves \eqref{eqMinimalSolution}. Finally, since $w_\eps\in C^{1, 1}(\R^N)$, we conclude that
\[
  -\eps^2 \Delta w_\eps-\eps^2 H w_\eps +(1-\nu) V w_\eps \ge 0
\]
weakly in $\R^N\setminus\{x_\eps\}$.
Setting
\[
  W_\eps(x)=\frac{w_\eps(x)}{\cosh \mu(\frac{r}{\eps}-R)},
\]
one can check the other properties.
\end{proof}

As a consequence of the previous lemma, we obtain an upper bound on the family of solutions $(u_\eps)$.
\begin{proposition}\label{P-barrier}
Let $(u_\eps)_{\eps > 0}$ be least energy solutions of \eqref{Pteps}.
Let $(x_\eps)_{\eps > 0}$ be such that
\[
 \liminf_{\eps \to 0} u_\eps(x_\eps) > 0.
\]
Then there exists $C, \lambda>0$ and $\eps_0>0$ such that for all $\eps\in(0, \eps_0)$,
\begin{equation}\label{e-barrier}
u_\eps(x) \le C\exp\Big(-\frac{\lambda}{\eps}\frac{|x-x_\eps|}{1+|x-x_\eps|}\Big)\, \big(1+\abs{x}^2\big)^\frac{-(N-2)}{2},
\qquad x\in\R^N.
\end{equation}
\end{proposition}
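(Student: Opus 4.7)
The plan is to combine the linear inequation for $u_\eps$ outside a small ball (Lemma~\ref{Prop-36}) with the family of barriers $W_\eps$ (Lemma~\ref{l-barrier}) via a comparison argument based on the Hardy-type estimate of Lemma~\ref{lemmaQuadratic}. The exponential-polynomial decay will then be inherited directly from the barrier.

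\textbf{Setup.} First, I fix any $\nu \in (0,1)$. By Lemma~\ref{Prop-A}, one has $\liminf_{\eps\to 0} d(x_\eps,\partial\Lambda) > 0$, so the hypotheses of Lemmas~\ref{Prop-36} and~\ref{l-barrier} are available. Applying Lemma~\ref{Prop-36} with this $\nu$ yields $\eps_1 > 0$ and $R > 0$ such that
\[
-\eps^2 \Delta u_\eps - \eps^2 H u_\eps + (1-\nu) V u_\eps \le 0 \qquad \text{in } \R^N \setminus B(x_\eps, \eps R)
\]
for every $\eps \in (0,\eps_1)$. Shrinking $\eps_1$ if necessary, Lemma~\ref{l-barrier} then produces a family $(W_\eps)$ satisfying the \emph{reverse} inequation with the same $\nu$ and $R$, the boundary condition $W_\eps = 1$ on $\partial B(x_\eps, \eps R)$, and the pointwise bound of item (iv). The remark following Lemma~\ref{lemma-conv} guarantees that $C_0 := \sup_{\eps>0}\Norm{u_\eps}_{L^\infty(\R^N)}$ is finite, so $u_\eps \le C_0 = C_0 W_\eps$ on $\partial B(x_\eps, \eps R)$.

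\textbf{Comparison.} Set $G_\eps := \R^N \setminus \overline{B(x_\eps, \eps R)}$ and $\phi_\eps := (u_\eps - C_0 W_\eps)_+$, extended by zero inside $B(x_\eps, \eps R)$. Since $u_\eps \le C_0 W_\eps$ on $\partial G_\eps$, this extension lies in $H^1_{\mathrm{loc}}(\R^N)$; moreover, since both $u_\eps$ and $W_\eps$ belong to $\D^1_0(\R^N)$, so does $\phi_\eps$. Subtracting the two inequations and testing with $\phi_\eps$ (the admissibility of which is identical to that used in the proof of Lemma~\ref{lemmaComparison}) yields
\[
\eps^2 \int_{\R^N} \abs{\nabla \phi_\eps}^2 - \eps^2 \int_{\R^N} H \phi_\eps^2 + (1-\nu)\int_{\R^N} V \phi_\eps^2 \le 0.
\]
By Lemma~\ref{lemmaQuadratic} the first two terms sum to a nonnegative quantity bounded below by a positive multiple of $\int \phi_\eps^2/\abs{x}^2$, while the third term is nonnegative. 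The whole expression is therefore zero, which forces $\phi_\eps \equiv 0$, i.e.\ $u_\eps \le C_0 W_\eps$ on $G_\eps$.

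\textbf{Conclusion.} Invoking item (iv) of Lemma~\ref{l-barrier}, we obtain the claimed bound on $G_\eps$ with $C := C_0 \cdot C_{\text{barrier}}$ and the same $\lambda$. Inside $B(x_\eps, \eps R)$ the estimate is trivial: the quantity $\abs{x-x_\eps}/(1+\abs{x-x_\eps})$ is bounded by $1$, the factor $(1+\abs{x}^2)^{-(N-2)/2}$ is bounded below by a positive constant (as $x_\eps$ remains in the bounded set $\Lambda$ and $\abs{x-x_\eps} \le \eps R$), and $u_\eps \le C_0$, so enlarging $C$ gives \eqref{e-barrier} on all of $\R^N$. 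The main subtlety is ensuring that $\phi_\eps$ has enough integrability to justify the test-function computation, which is handled exactly as in Lemma~\ref{lemmaComparison}; the use of a uniform $C_0$ relies crucially on the global $L^\infty$ bound derived from Lemma~\ref{lemma-conv}.
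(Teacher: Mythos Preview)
Your proof is correct and follows essentially the same route as the paper's: apply Lemma~\ref{Prop-36} to get the linear subsolution inequation, compare against the barrier family of Lemma~\ref{l-barrier} via the quadratic-form/comparison principle, and read off the decay from item (iv). The only cosmetic differences are that the paper uses the constant $\delta_0$ from \eqref{A} on $\partial B(x_\eps,\eps R)$ whereas you use the global bound $C_0=\sup_\eps\Norm{u_\eps}_{L^\infty}$, and the paper simply cites Lemma~\ref{lemmaComparison} while you spell out the test-function computation; neither changes the substance of the argument.
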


The bound \eqref{e-barrier} implies the weaker bound
\begin{equation}\label{e-barrier-short}
u_\eps(x)\le Ce^{-\frac{\lambda}{\eps}}|x-x_\eps|^{-(N-2)},
\qquad x\in\Lambda^c,
\end{equation}
which can be sometimes more convenient to use.

\begin{proof}
By Lemma~\ref{Prop-36}, there exists $\eps_0>0$ and $R>0$ such that for all $\eps\in(0, \eps_0)$
the solutions $u_\eps$ satisfy inequation \eqref{e-36} and, by \eqref{A}, one has
\[u_\eps(x)\le \delta_0\quad\text{on }\:\partial B(x_\eps, \eps R), \]
where $\delta_0>0$ is defined by \eqref{delta-0}.
Now let $(W_\eps)_\eps$ be the family of barrier functions constructed in Lemma~\ref{l-barrier}.
By Lemma~\ref{lemmaComparison}, we conclude that
\[
  u_\eps(x) \le \delta_0 W_\eps(x)\quad\text{in $\R^N\setminus B(x_\eps, \eps R)$}.
\]
Estimating $W_\eps$ from above and taking into account that $\sup \|u_\eps\|_{L^\infty(\R^N)}<\infty$,
we obtain \eqref{e-barrier}.
\end{proof}

\subsection{Solutions of the original problem and proof of Theorem~\ref{T-main}.}

The proof of Theorem~\ref{T-main} now follows from Proposition~\ref{P-barrier} and the following.

\begin{proposition}
Let $(u_\eps)_{\eps > 0}$ be least energy solutions of \eqref{Pteps}.
If
\[\sigma<(N-2)p-N.\]
then there exists $\eps_0>0$ such that, for every $0<\eps<\eps_0$,
$u_\eps$ solves the original problem \eqref{Peps}.
\end{proposition}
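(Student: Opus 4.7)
The plan is to verify that for small $\eps$ the penalized nonlinearity coincides with the original one: $g_\eps(x,u_\eps(x)) = K(x) u_\eps(x)^p$ everywhere on $\R^N$. On $\Lambda$ this is automatic, since $H\equiv 0$ there by \eqref{H}, so \eqref{nonlin} already reads $g_\eps(x,s)=K(x)s^p$. On $\Lambda^c$ the factor $\chi_\Lambda$ vanishes and the problem reduces to the pointwise inequality
\[
K(x)\, u_\eps(x)^{p-1} \le \eps^2 H(x), \qquad x \in \Lambda^c,
\]
which forces $\min(\eps^2 H, K u_\eps^{p-1}) = K u_\eps^{p-1}$.

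To establish it, I would first pick $x_\eps \in \Lambda$ with $\liminf_{\eps\to 0} u_\eps(x_\eps) > 0$; such a point exists by Lemma~\ref{l-delta}. Lemma~\ref{Prop-A} then gives $\liminf_{\eps\to 0} d(x_\eps,\partial\Lambda) > 0$, so Proposition~\ref{P-barrier} (in the simpler form \eqref{e-barrier-short}) delivers $C,\lambda > 0$ such that
\[
u_\eps(x) \le C e^{-\lambda/\eps}\, \abs{x-x_\eps}^{-(N-2)}, \qquad x \in \Lambda^c,
\]
for all small $\eps$. Because $\Lambda$ is bounded and $x_\eps$ remains in a compact subset of $\Lambda$, a direct geometric argument yields some $c > 0$, independent of small $\eps$, with $\abs{x-x_\eps} \ge c(1+\abs{x})$ for every $x \in \Lambda^c$. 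Combined with hypothesis \eqref{a-K}, this gives
\[
K(x)\, u_\eps(x)^{p-1} \le C_1\, e^{-(p-1)\lambda/\eps}\, (1+\abs{x})^{\sigma-(N-2)(p-1)}.
\]

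On $\Lambda^c$ one has $\abs{x} \ge \rho > \rho_0$ and $\eps^2 H(x) = \eps^2 \kappa\,\abs{x}^{-2}\,\log(\abs{x}/\rho_0)^{-(1+\beta)}$, so the inequality to establish is implied by
\[
(1+\abs{x})^{\sigma-(N-2)p+N}\,\log(\abs{x}/\rho_0)^{1+\beta} \le C_2\, \eps^2\, e^{(p-1)\lambda/\eps}.
\]
The hypothesis $\sigma < (N-2)p - N$ is precisely the statement that $\sigma-(N-2)p+N < 0$, so the polynomial factor decays at infinity and, even against the logarithmic growth, the left-hand side is uniformly bounded on $\Lambda^c$ by a constant independent of $\eps$. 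Since $\eps^2 e^{(p-1)\lambda/\eps} \to \infty$ as $\eps \to 0^+$, the inequality holds for all sufficiently small $\eps$, which completes the plan.

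The only step that needs real care is the exponent bookkeeping: the barrier loses $\abs{x}^{-(N-2)(p-1)}$ after being raised to the $(p-1)$-st power, $K$ contributes $\abs{x}^\sigma$, and beating $H$ costs a further $\abs{x}^{-2}$ on the other side; the assumption on $\sigma$ is exactly the threshold that makes the net polynomial exponent negative. Once this polynomial comparison is secured, the exponential gain $e^{-(p-1)\lambda/\eps}$ swamps the inverse polynomial $\eps^{-2}$ trivially, and the geometric comparison $\abs{x-x_\eps}\asymp 1+\abs{x}$ on $\Lambda^c$, itself a consequence of Lemma~\ref{Prop-A}, is essentially elementary.
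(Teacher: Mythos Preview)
Your proof is correct and follows the same route as the paper's own argument: invoke Lemma~\ref{l-delta} to locate $x_\eps$, use the barrier estimate \eqref{e-barrier-short} together with the growth bound \eqref{a-K}, and compare exponents to obtain $K u_\eps^{p-1}\le \eps^2 H$ on $\Lambda^c$. You are in fact more explicit than the paper about the geometric comparison $\abs{x-x_\eps}\asymp 1+\abs{x}$ and about why the case $x\in\Lambda$ is trivial, but these are only elaborations of the same argument.
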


\begin{proof}
By Lemma~\ref{l-delta}, there is a family of points $(x_\eps)_{\eps > 0}\subset\Lambda$ such that
\[
 \liminf_{\eps \to 0} u_\eps(x_\eps) > 0.
\]
Let $d_0:=\inf d(x_\eps, \partial \Lambda) > 0$.
Therefore, by assumption~\eqref{a-K}, Proposition~\ref{P-barrier} and \eqref{e-barrier-short}, for small $\eps>0$ and
for $x\in\Lambda^c$ we obtain
\[
 \begin{split}
K(x)\big(u_\eps(x)\big)^{p-1}&\le M(1+\abs{x})^{\sigma}\big(Ce^{-\frac{\lambda}{\eps}}\abs{x}^{-(N-2)}\big)^{p-1}\\
&\le C M e^{-\frac{\lambda }{\eps}(p-1)} (1+\abs{x})^{-(N-2)(p-1)+\sigma}\\
&\le \frac{\eps^2\kappa}{\abs{x}^{2}\big(\log \frac{\abs{x}}{\rho}\big)^{1+\beta}}=\eps^2 H(x).
\end{split}
\]
By construction of the penalized nonlinearity $g_\eps$, one has then $g_\eps\big(x, u_\eps(x)\big)=K(x)\big(u_\eps(x)\big)^p$, and therefore $u_\eps$ solves the original problem \eqref{Peps}.
\end{proof}

\begin{remark}
The proof of the preceding proposition shows that assumption \eqref{a-K} can be replaced by the existence of $M > 0$ and $\beta > 0$ such that
\[
\label{a-K+}
\tag{$K^\prime$}
  K(x) \le M\frac{(1+\abs{x})^{(N-2)p - N}}{\big(\log (\abs{x}+3)\big)^{1+\beta}}\quad\text{for all $x\in\R^N$}.
\]
\end{remark}

\section{Variants and Extensions}\label{s-Ext}

\subsection{Dimension two.}
With minor adjustments the penalization techniques developed in the paper could be modified for the case $N=2$.
Recall that the classical Hardy inequality fails on $\R^2$, i.e., if for every $u\in C^\infty_c(\R^2)$
\[
  \int_{\R^2}|\nabla u|^2\ge \int_{\R^2}H(x)\abs{u}^2,
\]
and $H\ge 0$ on $\R^2$, then $H=0$ on $\R^2$.
As a consequence, the space $\D^1_0(\R^2)$ is not well-defined, see \cite{Pinchover} for a discussion.

The following inequality can be seen as a replacement of Hardy inequality for the exterior domains on the plane:
if $\rho_0>0$, then
\begin{equation}\label{log-2}
  \int_{B(0, \rho_0)^c}|\nabla u|^2\ge \frac{1}{4}\int_{B(0, \rho_0)^c}\frac{\abs{u(x)}^2}{\abs{x}^2\big(\log\frac{\abs{x}}{\rho_0}\big)^2}\,dx
  \qquad\forall u\in C^\infty_c(\overline B(0, \rho_0)^c).
\end{equation}
To define the energy space, and to formulate the variational problem on the whole of $\R^2$,
we will need another Hardy type inequality, which is valid on the whole of $\R^2$.

\begin{lemma}
Let $\rho_0>0$ and $\rho>\rho_0$.
Then there exists $C>0$ such that for every $u\in C^\infty_c(\R^2)$,
\begin{equation}\label{Hardy-2}
\int_{\R^2}|\nabla u|^2+C\int_{B(0, \rho)\setminus B(0, \rho_0)}\abs{u}^2\ge \frac{1}{4}\int_{B(0, \rho)^c}\frac{\abs{u(x)}^2}{\abs{x}^2\big(\log\frac{\abs{x}}{\rho_0}\big)^2}\,dx.
\end{equation}
\end{lemma}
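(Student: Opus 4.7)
The plan is to deduce the global inequality \eqref{Hardy-2} from the exterior-domain inequality \eqref{log-2} by means of a smooth radial cut-off that localises the analysis to $B(0,\rho_0)^c$ while preserving the sharp constant $\frac14$.

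First, I would fix $\rho_1 \in (\rho_0,\rho)$ and choose a radial cut-off $\eta\in C^\infty(\R^2)$ with $0\le\eta\le 1$, $\eta\equiv 0$ on $B(0,\rho_1)$ and $\eta\equiv 1$ on $B(0,\rho)^c$. For $u\in C^\infty_c(\R^2)$, the product $\eta u$ lies in $C^\infty_c(\overline{B(0,\rho_0)}^c)$, so \eqref{log-2} applies. Since $\eta\equiv 1$ on $B(0,\rho)^c$, this yields
\[
\int_{\R^2}\abs{\nabla(\eta u)}^2 \;\ge\; \tfrac14 \int_{B(0,\rho_0)^c}\frac{\abs{\eta(x) u(x)}^2}{\abs{x}^2\bigl(\log\tfrac{\abs{x}}{\rho_0}\bigr)^2}\,dx
\;\ge\; \tfrac14\int_{B(0,\rho)^c}\frac{\abs{u(x)}^2}{\abs{x}^2\bigl(\log\tfrac{\abs{x}}{\rho_0}\bigr)^2}\,dx,
\]
which gives the desired lower bound on the right-hand side of \eqref{Hardy-2}.

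The main point is then to bound $\int\abs{\nabla(\eta u)}^2$ above by $\int\abs{\nabla u}^2$ with the coefficient $1$ (so as not to spoil the sharp constant) plus a remainder supported in the annulus $B(0,\rho)\setminus B(0,\rho_0)$. Expanding
\[
\abs{\nabla(\eta u)}^2 = \eta^2 \abs{\nabla u}^2 + 2\eta u\,\nabla\eta\cdot\nabla u + u^2\abs{\nabla\eta}^2,
\]
I would handle the cross term by noting that $2\eta u\,\nabla u\cdot\nabla\eta = \eta\,\nabla\eta\cdot\nabla(u^2)$ and integrating by parts:
\[
\int_{\R^2} \eta\,\nabla\eta\cdot\nabla(u^2) = -\int_{\R^2}u^2\bigl(\abs{\nabla\eta}^2+\eta\,\Delta\eta\bigr),
\]
which is legitimate since $u$ has compact support. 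The two $\abs{\nabla\eta}^2$ terms cancel, leaving
\[
\int_{\R^2}\abs{\nabla(\eta u)}^2 = \int_{\R^2}\eta^2\abs{\nabla u}^2 - \int_{\R^2} u^2 \eta\,\Delta\eta \;\le\; \int_{\R^2}\abs{\nabla u}^2 + C\int_{B(0,\rho)\setminus B(0,\rho_0)}\abs{u}^2,
\]
where $C := \sup_{\R^2}\abs{\eta\,\Delta\eta}$ is finite and $\eta\,\Delta\eta$ is supported in the annulus $\{\rho_1\le\abs{x}\le\rho\}\subset B(0,\rho)\setminus B(0,\rho_0)$, because $\eta$ is constant both on $B(0,\rho_1)$ and on $B(0,\rho)^c$. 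Combining the two displayed inequalities proves \eqref{Hardy-2} for $u\in C^\infty_c(\R^2)$.

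There is no real obstacle; the only subtlety is to use the algebraic identity $2\eta u\,\nabla u\cdot\nabla\eta = \eta\,\nabla\eta\cdot\nabla(u^2)$ together with the integration by parts, rather than a naive Young inequality, so that the coefficient of $\int\abs{\nabla u}^2$ remains exactly $1$ and the sharp constant $\frac14$ from \eqref{log-2} is retained on the right-hand side. The extension from $C^\infty_c(\R^2)$ to the natural completion (if needed later) then follows by standard density arguments.
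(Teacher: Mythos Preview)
Your proof is correct but follows a genuinely different route from the paper's. The paper proceeds via the Agmon--Allegretto--Piepenbrick ground-state transformation: it constructs an explicit positive function $W\in C^2(\R^2)$ (essentially $W(x)=\sqrt{\log(\abs{x}/\rho_0)}$ for $\abs{x}\ge\rho$, smoothly continued to the constant $1$ on $B(0,\rho_0)$) and uses the identity
\[
\int_{\R^2}\abs{\nabla u}^2=\int_{\R^2}\frac{-\Delta W}{W}\abs{u}^2+\int_{\R^2}\Bigl\lvert\nabla\Bigl(\frac{u}{W}\Bigr)\Bigr\rvert^2 W^2\ge\int_{\R^2}\frac{-\Delta W}{W}\abs{u}^2.
\]
Computing $-\Delta W/W$ gives exactly the weight $\tfrac{1}{4}\abs{x}^{-2}\bigl(\log\tfrac{\abs{x}}{\rho_0}\bigr)^{-2}$ on $B(0,\rho)^c$, a bounded (possibly negative) quantity on the annulus $B(0,\rho)\setminus B(0,\rho_0)$, and zero on $B(0,\rho_0)$; the annular contribution becomes the additive term $C\int_{B(0,\rho)\setminus B(0,\rho_0)}\abs{u}^2$. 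Your argument instead takes the exterior inequality \eqref{log-2} as a black box and transplants it to $\R^2$ via the cut-off $\eta$, using the integration-by-parts identity $\int\abs{\nabla(\eta u)}^2=\int\eta^2\abs{\nabla u}^2-\int u^2\,\eta\,\Delta\eta$ to keep the coefficient of $\int\abs{\nabla u}^2$ exactly $1$. Your approach is shorter and avoids introducing the positivity principle; the paper's approach, on the other hand, is self-contained (it does not invoke \eqref{log-2}) and is in the same spirit as the supersolution constructions used elsewhere in the paper for minimal solutions and barrier functions.
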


\begin{proof}
Let $\theta\in C^2(\R)$ be such that $\theta(t)\ge 1$ for $t\in\R$, $\theta(t)=1$ if $t\le 0$ and $\theta(t)=\sqrt{t}$ if $t\ge 1$.
Define $W\in C^2(\R^2)$ by
$$
W(x)=\theta\bigg(\frac{\log \frac{\abs{x}}{\rho_0}}{\log \frac{\rho}{\rho_0}}\bigg).
$$
By the Agmon--Allegretto--Piepenbrick positivity principle
(see \cite{Agmon} or \cite[Lemma A.9]{LLM}), we have for each $u\in C^\infty_c(\R^2)$,
\begin{equation}\label{AAP}
\int_{\R^2}|\nabla u|^2=\int_{\R^2}\frac{-\Delta W}{W}\abs{u}^2 + \int_{\R^2}\Big|\nabla \Big(\frac{u}{W}\Big)\Big|^2 W^2
\ge\int_{\R^2}\frac{-\Delta W}{W}\abs{u}^2.
\end{equation}

A direct computation shows that
\[
  -\Delta W(x)=-\theta''\bigg(\frac{\log \frac{\abs{x}}{\rho_0} }{\log \frac{\rho}{\rho_0}}\bigg)\frac{1}{\abs{x}^2\big (\log \frac{\rho}{\rho_0} \big)^2}.
\]
In particular, when $\abs{x}>\rho$ we have
\[
  \frac{-\Delta W(x)}{W(x)}=\frac{1}{4\abs{x}^2\big(\log \frac{\abs{x}}{\rho_0}\big)^2}.
\]
When $\rho_0<\abs{x}<\rho$ we have
\[\frac{-\Delta W(x)}{W(x)}\le \frac{C'}{\rho_0^2\big(\log \frac{\rho}{\rho_0}\big)^2}.\]
Finally, when $\abs{x}<\rho_0$ we have
\[\frac{-\Delta W(x)}{W(x)}=0.\]
Hence \eqref{Hardy-2} follows from \eqref{AAP} with $C=C'\rho_0^{-2}(\log \frac{\rho}{\rho_0})^{-2}$.
\end{proof}

Assume that $\overline{B(0, \rho)}\subset\Lambda$ and fix $\rho_0\in(0, \rho)$.
If $\eps>0$ satisfies
\begin{equation}\label{eps-2}
\eps^2 C\le\inf_{B(0, \rho)}V(x),
\end{equation}
where $C$ is the constant in \eqref{Hardy-2}, then for every $u\in C^\infty_c(\R^2)$,
\begin{equation}\label{Hardy-2V}
  \int_{\R^2} \eps^2 \abs{\nabla u}^2 + V \abs{u}^2 \ge \frac{1}{4}\int_{\Lambda^c} \frac{\abs{u(x)}^2}{\abs{x}^2\big(\log \frac{\abs{x}}{\rho_0}\big)^2}\, dx.
\end{equation}
The energy space $\D^1_V(\R^N)$ can be constructed similarly to the case $N>2$
as the closure of $C^\infty_c(\R^2)$ with respect to the norm $\|\cdot\|_\eps$ defined by
\[
\|u\|_\eps^2:=\int_{\R^2}\left(\eps^2|\nabla u|^2+V\abs{u}^2\right).
\]
For $\eps < \eta$, $\Norm{u}_{\eps} \le \Norm{u}_\eta \le \frac{\eta}{\eps} \Norm{u}_\eps$; hence all the norms define the same space for every $\eps > 0$, regardless of whether $\eps$ satisfies \eqref{eps-2}.

The penalization potential $\H:\R^N\to\R$ is defined by
\begin{equation}\label{H-2}
\H(x):=\frac{\kappa(1-\chi_{\Lambda}(x))}{\abs{x}^2\big(\log\frac{\abs{x}}{\rho_0}\big)^{2+\beta}},
\end{equation}
where $\beta > 0$ and $\kappa>0$ are chosen so that
\[
  \frac{\kappa}{\big(\log \frac{\rho}{\rho_0}\big)^{\beta}} < \frac{1}{4}.
\]
Note the exponent $2+\beta$ in \eqref{H-2} which replaces $1+\beta$ in \eqref{H}.

Inequality \eqref{Hardy-2V} ensures positivity of the quadratic form associated to $-\eps^2\Delta-\eps^2\H+V$ on $\R^N$,
for $\eps>0$ satisfying \eqref{eps-2}:
\begin{equation}\label{kappa-form-2}
\int_{\R^N}\left(\eps^2|\nabla u|^2-\eps^2H \abs{u}^2+V\abs{u}^2\right)\ge
\eps^2\Bigl( \frac{1}{4}-\frac{\kappa}{(\log \frac{\rho}{\rho_0})^{\beta}}\Bigr) \int_{B(0, \rho)^c}\frac{\abs{u}^2}{\abs{x}^2\bigl(\log\frac{\abs{x}}{\rho_0}\bigr)^2}\, dx,
\end{equation}
for every $u\in\D^1_V(\R^2)$.
This implies, in particular, that the linear operator $-\eps^2\Delta-\eps^2\H+V$ satisfies the comparison principle
on open subdomains $G\subseteq\R^N$.

\begin{lemma}[Comparison Principle]
\label{lemmaComparison-2}
Let $G\subseteq\R^2$ be a smooth domain.
Assume that $u, v\in H^1_\mathrm{loc}(G)\cap C(\bar G)$ satisfy
\[
-\eps^2\Delta u-\eps^2H u+V u \ge -\eps^2\Delta v-\eps^2H v+ V v\quad\text{in $G$},
\]
$\nabla(u-v)_-\in L^2(G)$ and $(u-v)_-\in L^2\big(G\setminus B(0, \rho), \abs{x}^{-2}\big(\log(\frac{\abs{x}}{\rho_0})\big)^{-2} dx\big)$,
where $\rho>\rho_0$.
If $\partial G\neq\emptyset$, assume in addition that $u\ge v$ on $\partial G$.
Then $u\ge v$ in $G$.
\end{lemma}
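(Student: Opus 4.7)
The argument mirrors the proof of the three-dimensional comparison principle, Lemma~\ref{lemmaComparison}: test the weak form of the difference inequation against $w := (u-v)_-$ extended by zero, and close the estimate with the planar Hardy form inequality \eqref{kappa-form-2}.

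First I would extend $w$ by zero to all of $\R^2$. Since $u,v \in C(\bar G)$ and $u \ge v$ on $\partial G$, this extension lies in $H^1_{\mathrm{loc}}(\R^2)$; the hypothesized $\nabla(u-v)_- \in L^2(G)$ together with the weighted integrability of $(u-v)_-$ outside $B(0,\rho)$ yield exactly the control needed to approximate $w$ in the $\|\cdot\|_\eps$ norm by compactly supported truncations $w\chi_R$ (which lie in $\D^1_V(\R^2)$) and thereby use $w$ as an admissible test function in the weak formulation of the difference inequation.

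Exploiting $(u-v)w = -w^2$ and $\nabla(u-v)\cdot\nabla w = -\abs{\nabla w}^2$ on $\{w>0\}$, the test yields
\[
\int_G \bigl(\eps^2\abs{\nabla w}^2 - \eps^2 H w^2 + V w^2\bigr) \le 0.
\]
Applying \eqref{kappa-form-2} to the zero-extension of $w$ bounds this integral below by
\[
\eps^2 \Bigl(\frac{1}{4} - \frac{\kappa}{(\log \frac{\rho}{\rho_0})^\beta}\Bigr) \int_{B(0,\rho)^c} \frac{w^2}{\abs{x}^2 (\log \frac{\abs{x}}{\rho_0})^2}\,dx,
\]
whose prefactor is strictly positive by the choice of $\kappa$ and $\beta$. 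Hence $w \equiv 0$ on $\R^2 \setminus B(0,\rho)$. Since $H$ vanishes on $\Lambda \supset B(0,\rho)$, the preceding displayed inequality reduces to $\int_{G \cap B(0,\rho)}(\eps^2\abs{\nabla w}^2 + V w^2) \le 0$, forcing $\nabla w = 0$ almost everywhere on $G \cap B(0,\rho)$. Combined with the continuity of $w$ and the fact that $w$ vanishes on both $\partial G$ and on $\partial B(0,\rho) \cap G$, this forces $w \equiv 0$ on every connected component of $G \cap B(0,\rho)$, which completes the proof that $u \ge v$ throughout $G$.

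The main technical point I expect to wrestle with is the approximation justifying $w$ as a test function: unlike in dimensions $N \ge 3$, one cannot rely on polynomial decay of the Green function of $-\Delta - H$ at infinity and must instead invoke the weighted $L^2$ tail hypothesis on $(u-v)_-$ to control both the passage to the limit in the energy identity and the application of \eqref{kappa-form-2}. This is precisely the role foreshadowed by the Remark following Lemma~\ref{lemmaComparison}.
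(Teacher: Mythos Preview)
Your proposal is correct and follows exactly the approach the paper intends: the paper does not spell out a proof of Lemma~\ref{lemmaComparison-2}, but by analogy with Lemma~\ref{lemmaComparison} it is to be obtained by testing the difference inequation against $(u-v)_-$ and invoking the quadratic-form inequality \eqref{kappa-form-2}, which is precisely what you do. One small simplification in your last step: since $B(0,\rho)\subset\Lambda$ and $\inf_\Lambda V>0$ (this is already used in \eqref{eps-2}), the term $\int_{G\cap B(0,\rho)}Vw^2$ alone forces $w\equiv 0$ there, so the connectedness/boundary argument is unnecessary.
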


Now we construct a minimal solution to $-\Delta-H$ in the complement of $\Lambda$.

\begin{lemma}
\label{lemmaMinimalSolution-2}
There exists $w \in C^2 (\Lambda^c)$ such that
\begin{equation}
\label{eqMinimalSolution-2}
  \left\{\begin{aligned}
    -\Delta w - H w &=0 &&\text{in $\overline\Lambda^c$}, \\
    w&=1&& \text{on $\partial\Lambda$}.
  \end{aligned}
\right.
\end{equation}
and
\[
 \int_{\Lambda^c} \abs{\nabla w(x)}^2+\frac{\abs{w(x)}^2}{\abs{x}^{2}\big(\log \frac{\abs{x}}{\rho_0}\big)^{2}}\, dx  < \infty.
\]
Moreover, there exists $0 < c <  C <\infty$ such that for every $x \in \Lambda^c$,
\[
 c \le w(x) \le C.
\]
\end{lemma}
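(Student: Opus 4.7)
The plan is to mimic the proof of Lemma~\ref{lemmaMinimalSolution}: construct $w$ variationally, then bracket it between explicit radial super- and sub-solutions. The essential two-dimensional feature is that $-\Delta$ admits bounded, rather than decaying, positive harmonic functions at infinity, so I expect $w$ to be trapped between two positive constants rather than to decay like $\abs{x}^{-(N-2)}$.

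I would first produce $w$ by minimising $\int_{\Lambda^c}\bigl(\abs{\nabla u}^2 - H\abs{u}^2\bigr)\, dx$ over the admissible class of functions with trace $1$ on $\partial\Lambda$. The Hardy-type inequality \eqref{log-2}, applied on $B(0,\rho_0)^c \supset \Lambda^c$, together with the bound $\kappa/(\log(\rho/\rho_0))^\beta < 1/4$, yields the coercivity estimate
\[
  \int_{\Lambda^c} H \abs{u}^2 \le \frac{\kappa}{(\log(\rho/\rho_0))^\beta}\int_{\Lambda^c} \frac{\abs{u(x)}^2}{\abs{x}^2 \bigl(\log \tfrac{\abs{x}}{\rho_0}\bigr)^2}\, dx \le \frac{4\kappa}{(\log(\rho/\rho_0))^\beta}\int_{\Lambda^c} \abs{\nabla u}^2\, dx,
\]
so the direct method produces a minimiser and classical elliptic regularity gives $w \in C^2(\Lambda^c)$. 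The integrability statement follows from the construction and from \eqref{log-2}.

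The upper bound is the heart of the proof. I would use the bounded radial barrier
\[
  W(x) := 1 - \frac{B}{\bigl(\log (\abs{x}/\rho_0)\bigr)^\beta}, \qquad B > \frac{\kappa}{\beta(\beta+1)}.
\]
Writing $t = \log(\abs{x}/\rho_0)$, under which $-\Delta$ acts on radial functions as $-\abs{x}^{-2}\partial_t^2$, a short calculation gives
\[
  -\Delta W - H W = \frac{B\beta(\beta+1)-\kappa}{\abs{x}^2(\log(\abs{x}/\rho_0))^{\beta+2}} + \frac{\kappa B}{\abs{x}^2(\log(\abs{x}/\rho_0))^{2\beta+2}} \ge 0,
\]
while $W > 0$ as soon as $\abs{x} > \rho_0\exp(B^{1/\beta})$. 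Choosing $R$ large enough that $\Lambda \subset B(0,R)$ and $W > 0$ on $\partial B(0,R)$, the $-\Delta - H$ analogue of Lemma~\ref{lemmaComparison-2} on $G \subseteq \Lambda^c$ (for which \eqref{log-2} alone suffices for the quadratic-form positivity) applied to $w$ and a suitable multiple $C_1 W$ on $\{\abs{x}>R\}$ yields $w \le C_1 W \le C_1$ there; together with continuity on the compact set $\overline{B(0,R)}\setminus\Lambda$ this gives the global upper bound $w \le C$. For the lower bound, $v \equiv 1$ satisfies $-\Delta v - Hv = -H \le 0$ in $\Lambda^c$ and agrees with $w$ on $\partial\Lambda$; since $(w-1)_-$ is bounded and the weight $\abs{x}^{-2}(\log(\abs{x}/\rho_0))^{-2}$ is integrable at infinity in $\R^2$, the hypotheses of the comparison principle are met and deliver $w \ge 1$. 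The main obstacle is the barrier construction: one must locate the algebraic sweet spot $B\beta(\beta+1) \ge \kappa$ so that the leading $t^{-\beta-2}$ coefficient in $-\Delta W - HW$ is nonnegative, while keeping $W$ bounded and strictly positive outside a large ball — the ansatz above balances both constraints, and all remaining steps are routine adaptations of the $N\ge 3$ arguments.
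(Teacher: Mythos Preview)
Your proposal is correct and follows essentially the same route as the paper. The paper's supersolution $W(x)=\beta(\beta+1)-\kappa\bigl(\log\tfrac{\abs{x}}{\rho_0}\bigr)^{-\beta}$ is your barrier with the borderline choice $B=\kappa/(\beta(\beta+1))$, rescaled; the lower bound is left implicit in the paper (``proceed as in Lemma~\ref{lemmaMinimalSolution}''), and your constant subsolution $v\equiv 1$ is exactly the two-dimensional analogue of $\abs{x}^{-(N-2)}$ used there.
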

\begin{proof}
The existence of a solution follows from \eqref{log-2} by the classical variational techniques.
To obtain the asymptotic, set
\begin{equation}\label{w-eps-2}
W (x):=\beta(\beta+1) - \frac{\kappa}{\bigl(\log \tfrac{\abs{x}}{\rho_0}\bigr)^{\beta}},
\end{equation}
where $\beta>0$ is taken from \eqref{H-2}. Since, for $x \in \R^2 \setminus B(0,\rho_0)$,
\[
 -\Delta W(x)=\frac{\kappa\beta(\beta+1)}{\abs{x}^2\big(\log \frac{\abs{x}}{\rho_0}\big)^{2+\beta}}
\]
$W$ is a supersolution to $-\Delta-\H$ in $\Lambda^c$. Moreover, $W$ is positive on $\partial B(0, R) \subset \Lambda$ with $\Lambda \subset B(0, R)$, provided $R$ satisfies
\[
 \beta(\beta+1) \big(\log \tfrac{R}{\rho_0}\big)^{\beta} > \kappa.
\]
The proof continues as in Lemma \ref{lemmaMinimalSolution},
using comparison principle of Lemma~\ref{lemmaComparison-2} (or
alternatively, a comparison principle that follows from \eqref{log-2} instead of \eqref{H-2}).
\end{proof}

After these adjustments are introduced, one defines penalized nonlinearity using \eqref{nonlin}
and proceeds as in the proof of Theorem~\ref{T-main}, with obvious modifications.
The barrier functions are defined using \eqref{eqgamma}, \eqref{eqw} with $N=2$.
In this way one obtains the following result.

\begin{theorem}\label{T-main-2}
Let $p>1$ and let $V, K\in C(\R^2, \R^+)$. Assume that
there exists $\sigma<-2$ and $M >0$ such that
\[
0\le K(x)\le M(1+\abs{x})^{\sigma}\quad\text{for all $x\in\R^2$}.
\]
If there exists a smooth bounded open set $\Lambda\subset\R^2$ such that
\[
0<\inf_{x\in\Lambda}\A(x)<\inf_{x\in\partial\Lambda}\A(x).
\]
then there exists  $\eps_0>0$ such that for every $0<\eps<\eps_0$,
equation \eqref{Peps} has at least one positive solution $u_\eps\in \D^1_V(\R^2)\cap C^1(\R^2)$.
Moreover,
\[\|u_\eps\|_\eps=O(\eps)\quad\text{ as $\:\eps\to 0$}, \]
$u_\eps$ attains its maximum at $x_\eps \in \Lambda$,
\begin{gather*}
  \liminf_{\eps \to 0} u_\eps(x_\eps) > 0, \\
  \lim_{\eps\to 0}\mathcal{A}(x_\eps)=\inf_{x\in\Lambda}\mathcal{A}(x),
\end{gather*}
and there exist $C, \lambda>0$ such that
\begin{equation}
  u_\eps(x) \le C\exp\Big(-\frac{\lambda}{\eps}\frac{|x-x_\eps|}{1+|x-x_\eps|}\Big), \qquad x\in\R^2.\label{fast-2}
  \end{equation}
\end{theorem}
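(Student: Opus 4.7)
The plan is to mirror the proof of Theorem~\ref{T-main} step by step, substituting the 2D tools developed earlier in this section for their higher-dimensional counterparts: the exterior Hardy inequalities \eqref{log-2} and \eqref{Hardy-2V}, the penalization potential $H$ from \eqref{H-2}, the quadratic form positivity \eqref{kappa-form-2}, the comparison principle (Lemma~\ref{lemmaComparison-2}), and the bounded minimal positive solution of Lemma~\ref{lemmaMinimalSolution-2}. First I would define the penalized nonlinearity $g_\eps$ exactly as in \eqref{nonlin} and the associated functional $\J_\eps$ on $\D^1_V(\R^2)$. Since properties $(g_1)$--$(g_4)$ and the local minimum / unboundedness arguments hold verbatim, $\J_\eps$ has the Mountain Pass geometry.

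Next I would verify the Palais--Smale condition. The role of the tail bound \eqref{e-2} is played here by the estimate, valid for bounded sequences $(u_n)$ in $\D^1_V(\R^2)$,
\[
  \int_{\R^2 \setminus B(0,R)} H\, \abs{u_n}^2
  \le \frac{\kappa}{\bigl(\log \frac{R}{\rho_0}\bigr)^{\beta}} \int_{\Lambda^c} \frac{\abs{u_n}^2}{\abs{x}^2 (\log \frac{\abs{x}}{\rho_0})^{2}}\, dx
  \le \frac{4\kappa}{\eps^2 \bigl(\log \frac{R}{\rho_0}\bigr)^{\beta}} \Norm{u_n}_\eps^2,
\]
obtained by combining \eqref{H-2} with \eqref{Hardy-2V} (for fixed $\eps$ small enough to satisfy \eqref{eps-2}); this replaces the loss of the classical Hardy inequality on $\R^2$. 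The rest of the Palais--Smale argument, and hence Proposition~\ref{MPL}, goes through unchanged and produces a least energy critical point $u_\eps$. A standard test function of the form $\eta(\cdot/\delta)\, v\bigl((\cdot - x_*)/\eps\bigr)$, with $v$ a ground state of the limiting equation at a local minimizer $x_*$ of $\A$ in $\Lambda$, yields the upper bound $\J_\eps(u_\eps) = O(\eps^2)$ and therefore $\Norm{u_\eps}_\eps = O(\eps)$, which is the $N=2$ analogue of Lemma~\ref{lemmaUpper}.

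The concentration machinery (Lemmas~\ref{l-delta}, \ref{lemmaConcentration}, \ref{Prop-A}, \ref{lemma-conv}) depends only on the behaviour of $u_\eps$ inside $\Lambda$, where $V$ is bounded below and the penalization is inactive. These lemmas therefore carry over with cosmetic changes and supply points $x_\eps \in \Lambda$ with $\liminf_{\eps\to0} u_\eps(x_\eps) > 0$, $\A(x_\eps) \to \inf_\Lambda \A$, and uniform decay of $u_\eps$ on $\Lambda \setminus B(x_\eps, \eps R)$. For the barrier, I would construct $W_\eps$ as in Lemma~\ref{l-barrier}, using $\gamma_\eps$ from \eqref{eqgamma} on $B(x_\eps, r)$ and the bounded minimal solution $w$ from Lemma~\ref{lemmaMinimalSolution-2} on the exterior, normalized by $\cosh\mu(r/\eps - R)$. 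Because $w$ is now bounded instead of decaying, only the cosh factor survives at infinity, yielding exactly the bound \eqref{fast-2} after applying the comparison principle of Lemma~\ref{lemmaComparison-2}.

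The hard part, as in the higher-dimensional case, is closing the loop to show that $u_\eps$ actually solves the original equation \eqref{Peps}. From the barrier I get $u_\eps(x) \le C e^{-\lambda/\eps}$ on $\Lambda^c$, so outside $\Lambda$
\[
  K(x)\, u_\eps(x)^{p-1} \le M C^{p-1} (1+\abs{x})^{\sigma} e^{-\lambda(p-1)/\eps},
\]
and I need this to be dominated by $\eps^2 H(x) = \eps^2 \kappa / \bigl(\abs{x}^2 (\log \frac{\abs{x}}{\rho_0})^{2+\beta}\bigr)$. Equivalently, I need
\[
  (1+\abs{x})^{\sigma+2}\bigl(\log \tfrac{\abs{x}}{\rho_0}\bigr)^{2+\beta} \le C'\, \eps^2 e^{\lambda(p-1)/\eps}
  \qquad \text{on $\Lambda^c$.}
\]
The assumption $\sigma < -2$ is precisely what forces the left-hand side to be globally bounded on $\Lambda^c$ (the algebraic decay beats the logarithmic blow-up), while the right-hand side tends to $+\infty$ as $\eps\to 0$; hence the inequality holds for all small $\eps$. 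Then $g_\eps(x, u_\eps) = K(x) u_\eps^p$ on $\Lambda^c$ by the definition of the penalized nonlinearity, and on $\Lambda$ this equality is built into \eqref{nonlin}, completing the proof of Theorem~\ref{T-main-2}.
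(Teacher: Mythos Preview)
Your proposal is correct and follows essentially the same approach as the paper, which explicitly states that one ``defines penalized nonlinearity using \eqref{nonlin} and proceeds as in the proof of Theorem~\ref{T-main}, with obvious modifications,'' using the barrier functions \eqref{eqgamma}, \eqref{eqw} with $N=2$. You have supplied more detail than the paper does---particularly the explicit tail estimate for the Palais--Smale condition via \eqref{H-2} and \eqref{Hardy-2V}, and the verification that $\sigma<-2$ makes $(1+\abs{x})^{\sigma+2}\bigl(\log\tfrac{\abs{x}}{\rho_0}\bigr)^{2+\beta}$ bounded on $\Lambda^c$---but the strategy is identical.
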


If $V$ has compact support in $\R^2$
then the restriction on the admissible range of $\sigma$ is sharp:
\eqref{Peps} has no positive solutions for $\sigma\ge -2$ (see e.g.\ \cite{LLM}).
The upper bound \eqref{fast-2} is optimal as $\abs{x}\to\infty$ in the sense that
\[
\label{uepsinfty}
\liminf_{\abs{x}\to\infty}u_\eps(x)>0,
\]
for each fixed $\eps\in(0, \eps_0]$. Indeed, by Lemma~\ref{lemmaComparison-2},
solutions $(u_\eps)$ can be bounded from below by a constant.
On the other hand, for every fixed $x\in\Lambda^c$, $u_\eps(x)$ tends exponentially to $0$ as $\eps \to 0$.
Note also that solutions $(u_\eps)$ do not belong to $L^p(\R^N)$ for any $1\le p<\infty$.

\subsection{Equations on domains}
With some adjustments the techniques developed in the paper can be extended to the equations
\begin{equation}\label{P-Omega}
-\eps^2\Delta u + V u = K u^p\quad\text{in $\Omega$},
\end{equation}
where $\Omega$ is a domain in $\R^N$.
Let $d_{\partial \Omega}(x):=d(x, \partial\Omega)$ denotes the distance to the boundary of $\Omega$.
For the sake of simplicity we limit our discussion to the case when $\Omega$ is bounded and smooth.

\begin{theorem}\label{t-main-domain}
Let $\Omega\subset\R^N$ be a domain with a smooth bounded boundary, $1 < p < \frac{N+2}{N-2}$ if $N\ge 3$
or $1<p<\infty$ if $N=1, 2$.
Assume that $V, K \in C(\Omega,\R^+)$ and there exists $\sigma < p + 1$ and $M > 0$ such that
\[
  K(x)\le \frac{M}{\big(d_{\partial \Omega}(x)\big)^\sigma}.
\]
Assume there exists a smooth bounded open subset $\Lambda\subset\Omega$ such that
\[
  0<\inf_{x\in\Lambda}\A(x)<\inf_{x\in\partial\Lambda}\A(x).
\]
Then there exists $\eps_0>0$ such that for every $0<\eps<\eps_0$,
equation \eqref{Peps} has at least one positive solution $u_\eps\in H^1_0(\Omega)$.
Moreover,
$u_\eps$ attains its maximum at $x_\eps \in \Lambda$,
\begin{gather*}
  \liminf_{\eps \to 0} u_\eps(x_\eps) > 0, \\
  \lim_{\eps\to 0}\mathcal{A}(x_\eps)=\inf_{x\in\Lambda}\mathcal{A}(x),
\end{gather*}
and there exists $C, \lambda >0$ such that
\[
   u_\eps(x) \le C \exp \Big(-\frac{\lambda}{\eps} \frac{\abs{x-x_\eps}}{1+\abs{x-x_\eps}}\Big)d_{\partial \Omega}(x), \qquad x\in\Omega.
\]
\end{theorem}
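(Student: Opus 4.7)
The plan is to reproduce the scheme of Sections~\ref{s-Penalization}--\ref{s-Barriers} with three structural substitutions. The ambient energy space $\D^1_V(\R^N)$ is replaced by $H^1_0(\Omega)$; the Hardy inequality on $\R^N$ is replaced by the boundary Hardy inequality for smooth bounded $\Omega$,
\[
  \int_\Omega \abs{\nabla u}^2 \ge \frac{1}{4}\int_\Omega \frac{\abs{u(x)}^2}{d_{\partial \Omega}(x)^2}\, dx,\qquad u\in H^1_0(\Omega);
\]
and the algebraic decay $\abs{x}^{-(N-2)}$ of the minimal solution of $-\Delta-\H$ at infinity is replaced by the linear vanishing rate $d_{\partial \Omega}(x)$ at $\partial\Omega$. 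This last substitution is what produces the admissible range $\sigma<p+1$ in place of $\sigma<(N-2)p-N$.

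The penalization potential adapted to the boundary would be
\[
  \H(x):=\frac{\kappa\,(1-\chi_{\Lambda}(x))}{d_{\partial \Omega}(x)^2\bigl(\log \tfrac{d_0}{d_{\partial \Omega}(x)}\bigr)^{1+\beta}},
\]
with $\beta>0$, $d_0>\sup_\Omega d_{\partial \Omega}$, and $\kappa>0$ small enough that the coercivity analogue of Lemma~\ref{lemmaQuadratic} and the comparison principle of Lemma~\ref{lemmaComparison} both hold. The penalized nonlinearity $g_\eps$ is defined by \eqref{nonlin} using this $\H$, and the mountain pass existence of Proposition~\ref{MPL} goes through; the Palais--Smale condition is in fact simpler than in $\R^N$, since $\Omega$ is bounded and Rellich's theorem gives compactness of $H^1_0(\Omega)\hookrightarrow L^{p+1}(\Omega)$ directly. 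All of the concentration analysis of Section~\ref{s-Energy}---the upper energy bound (Lemma~\ref{lemmaUpper}), the nonvanishing statement (Lemma~\ref{l-delta}), the concentration lemmas (Lemmas~\ref{lemmaConcentration} and \ref{Prop-A}), the convergence of rescalings (Lemma~\ref{lemma-conv}) and the linear inequation outside small balls (Lemma~\ref{Prop-36})---transfers without essential change, since all the relevant arguments are localized near $\Lambda$.

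The new analytical content is the construction of the minimal positive solution $w\in C^2(\Omega\setminus\overline{\Lambda})$ of $-\Delta w-\H w=0$ with $w=1$ on $\partial \Lambda$ and $w=0$ on $\partial \Omega$ in the $H^1_0$-sense, together with the two-sided estimate
\[
  c\, d_{\partial \Omega}(x)\le w(x)\le C\, d_{\partial \Omega}(x).
\]
The lower bound follows from the comparison principle and the fact that, since $\partial \Omega$ is smooth, $d_{\partial \Omega}$ is $C^2$ in a boundary neighborhood and a small positive multiple of it is a subsolution of $-\Delta-\H$ there. The upper bound uses a supersolution modeled on the function \eqref{w-eps}, namely
\[
  W(x)=d_{\partial \Omega}(x)\Bigl(C_0-\alpha\bigl(\log \tfrac{d_0}{d_{\partial \Omega}(x)}\bigr)^{-\beta}\Bigr),
\]
where $\alpha$ is chosen large enough, depending on $\kappa$ and $\beta$, that the singular contribution of the logarithmic correction to $-\Delta W$ dominates $\H W$ uniformly near $\partial \Omega$. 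With $w$ in hand, barriers are built exactly as in Lemma~\ref{l-barrier} by pasting the radial profile $\cosh \mu(r-\abs{x-x_\eps})/\eps$ inside a small ball around $x_\eps$ with a multiple of $w$ outside, which yields the required concentration bound
\[
  u_\eps(x)\le C\exp\Bigl(-\frac{\lambda}{\eps}\frac{\abs{x-x_\eps}}{1+\abs{x-x_\eps}}\Bigr)d_{\partial \Omega}(x).
\]

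Finally, to conclude that $u_\eps$ solves the original problem \eqref{Peps} and not merely its penalized version, it remains to verify $K(x)u_\eps(x)^{p-1}\le \eps^2 \H(x)$ on $\Omega\setminus\Lambda$. Using the barrier bound and the hypothesis on $K$,
\[
  K(x)\,u_\eps(x)^{p-1}\le \frac{M}{d_{\partial \Omega}(x)^{\sigma}}\bigl(Ce^{-\lambda/\eps}\,d_{\partial \Omega}(x)\bigr)^{p-1}=C'e^{-\lambda(p-1)/\eps}\,d_{\partial \Omega}(x)^{p-1-\sigma},
\]
while $\eps^2\H(x)$ is of order $\eps^2/\bigl(d_{\partial \Omega}(x)^2\bigl(\log d_0/d_{\partial \Omega}(x)\bigr)^{1+\beta}\bigr)$. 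The required inequality thus amounts to $e^{-\lambda(p-1)/\eps}\,d_{\partial \Omega}(x)^{p+1-\sigma}\bigl(\log d_0/d_{\partial \Omega}(x)\bigr)^{1+\beta}\lesssim \eps^2$, which holds for all small $\eps$ precisely when $p+1-\sigma>0$, because then $d_{\partial \Omega}^{p+1-\sigma}$ is bounded on $\Omega$ and any residual logarithmic growth is absorbed by the exponential in $1/\eps$. I expect the main obstacle to be the pointwise Laplacian estimate for the supersolution $W$ near $\partial \Omega$: unlike in $\R^N$, where $-\Delta\abs{x}^{-(N-2)}=0$ identically, one only has $-\Delta d_{\partial \Omega}$ bounded (proportional to the mean curvature of $\partial \Omega$ to leading order in normal coordinates), so one must check carefully that the singular term produced by the logarithmic correction in $-\Delta W$ genuinely dominates $\H W$ uniformly up to the boundary.
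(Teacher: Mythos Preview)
Your proposal is correct and follows essentially the same route as the paper: a boundary Hardy inequality yields the comparison principle, the penalization potential carries a logarithmic weight in $d_{\partial\Omega}$, the minimal solution $w$ of $-\Delta w-Hw=0$ in $\Omega\setminus\overline\Lambda$ satisfies two-sided bounds $c\,d_{\partial\Omega}\le w\le C\,d_{\partial\Omega}$ obtained via a supersolution of the form $d_{\partial\Omega}(x)\bigl(C_0-\alpha(\log\tfrac{1}{d_{\partial\Omega}})^{-\beta}\bigr)$, and the rest is the gluing of Lemma~\ref{l-barrier} followed by the $\sigma<p+1$ check. One correction: the boundary Hardy inequality does not hold with constant $\tfrac14$ for arbitrary smooth bounded $\Omega$; the sharp constant $C_\Omega\in(0,\tfrac14]$ depends on the domain and can be arbitrarily small, so you should take $0<\kappa<C_\Omega$ rather than $0<\kappa<\tfrac14$ (the paper makes this explicit). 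Your anticipation of the main technical point is also on target: the paper absorbs the bounded term $-\Delta d_{\partial\Omega}$ by requiring the one-variable profile $J(t)=t\bigl(\beta-(\log\tfrac1t)^{-\beta}\bigr)$ to satisfy $-J''(t)-J(t)/\bigl(t^2(\log\tfrac1t)^{1+\beta}\bigr)\ge K\abs{J'(t)}$ near $t=0$, where $K=\Norm{\Delta d_{\partial\Omega}}_{L^\infty}$ on a boundary collar.
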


Recall that if $\Omega$ is a domain with a smooth bounded boundary then the classical
Hardy inequality reads as follows. There exists a constant
$C_\Omega\in(0, 1/4]$ such that
\begin{equation}\label{e-Hardy}
\int_\Omega|\nabla u|^2\, dx\ge C_\Omega\int_\Omega\frac{\abs{u}^2}{d_{\partial \Omega}{}^2}\, dx
\qquad\forall\:u\in C^\infty_c(\Omega).
\end{equation}
If $\Omega$ is convex then $C_\Omega=1/4$. In general, $C_\Omega$ varies with the domain
and could be arbitrary small.
To prove Theorem \ref{t-main-domain}, one defines the penalization potential
\[
  H (x) = \frac{\kappa(1-\chi_\Lambda(x))}{(d_{\partial \Omega}(x))^2\eta(x)},
\]
where $\eta \in C(\Omega)$ is continuous, with $0 \le \eta \le 1$ in $\Omega$,
and
\[
  \eta(x)=\big\vert\log d_{\partial \Omega}(x)\bigr\vert^{1+\beta},
\]
on a neighborhood of $\partial \Omega$,
$0<\beta<1/2$ and where $0 < \kappa < C_\Omega$.
Then for every $u\in W^{1,2}_0(\Omega)$,
\begin{equation}\label{H-d}
\int_{\Omega} \abs{\nabla u}^2\ge\kappa\int_{\Omega} H \abs{u}^2,
\end{equation}
After that, one proceeds similarly to the proof of Theorem~\ref{T-main}.

In the construction of barriers, one replaces Lemma~\ref{lemmaMinimalSolution} by
\begin{lemma}
If $w \in C(\Bar{\Omega}\setminus  \Lambda) \cap W^{1, 2}_0(\Omega \setminus  \Bar{\Lambda})$ solves
\[
 \left\{
 \begin{aligned}
   -\Delta w-H w &=  0 && \text{ in $\Omega \setminus\overline\Lambda $}, \\
   w &= 1 && \text{ on $\partial \Lambda$}, \\
   w &= 0 && \text{ on $\partial\Omega$}.
 \end{aligned}
 \right.
\]
then there exist $0<c<C<\infty$ such that for $x \in \Bar{\Omega} \setminus \Lambda$
\[
c\, d(x, \partial \Omega)\le  w(x)\le C\, d(x, \partial \Omega).
\]
\end{lemma}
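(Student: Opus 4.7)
\emph{Plan of proof.} The plan is to follow the strategy of Lemma~\ref{lemmaMinimalSolution}, adapted to the domain setting with the Hardy inequality \eqref{H-d} replacing the one on $\R^N$. Since $\overline{\Omega}\setminus\Lambda$ is compact and $w\in C(\overline{\Omega}\setminus\Lambda)$, $w$ is bounded; moreover, the strong maximum principle applied to $-\Delta-H$ (with $H\ge 0$ and $w\ge 0$ on the boundary) yields $w>0$ in $\Omega\setminus\overline{\Lambda}$, so $w$ is bounded below by a positive constant on every slab $\{d_{\partial\Omega}\ge\delta\}$. Both claimed inequalities are therefore trivial at positive distance from $\partial\Omega$, and the analysis reduces to a thin tubular neighborhood $U_\delta:=\{0<d_{\partial\Omega}<\delta\}$ on which $d_{\partial\Omega}$ is smooth and $\Delta d_{\partial\Omega}$ is bounded.

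For the lower bound, I would take $v:=c\,d_{\partial\Omega}$. Since $\Delta d_{\partial\Omega}$ is bounded on $U_\delta$ while $Hv\sim c\kappa/(d_{\partial\Omega}|\log d_{\partial\Omega}|^{1+\beta})\to+\infty$ as $d_{\partial\Omega}\to 0$, the inequality $-\Delta v-Hv\le 0$ holds on $U_\delta$ for $\delta$ small enough, i.e.\ $v$ is a subsolution. Choosing $c>0$ so small that $c\delta\le\inf_{\{d_{\partial\Omega}=\delta\}}w$ and noting $v=0=w$ on $\partial\Omega$, the comparison principle (the $\Omega$-analog of Lemma~\ref{lemmaComparison}, based on the inequality \eqref{H-d}) then yields $w\ge c\,d_{\partial\Omega}$ on $U_\delta$.

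For the upper bound, I would consider the explicit candidate
\[
W(x):=K\,d_{\partial\Omega}(x)\bigl(\beta-\kappa|\log d_{\partial\Omega}(x)|^{-\beta}\bigr),
\]
directly modelled on the supersolution $|x|^{-(N-2)}\bigl((N-2)\beta-\kappa(\log(|x|/\rho_0))^{-\beta}\bigr)$ of Lemma~\ref{lemmaMinimalSolution}. Setting $t=d_{\partial\Omega}$ and $L=\log(1/t)$, a direct calculation gives
\[
-W''(t)-HW=\frac{K\kappa\beta(\beta+1)}{t\,L^{\beta+2}}+\frac{K\kappa^{2}}{t\,L^{1+2\beta}}>0,
\]
the point being that the leading singular contributions $K\kappa\beta/(t\,L^{\beta+1})$ from $-W''$ and from $HW$ cancel exactly -- this is precisely what the exponent $1+\beta$ in the definition of $H$ is tuned for. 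In higher dimensions the additional curvature term $-W'(t)\,\Delta d_{\partial\Omega}$ is merely bounded (since $W'(t)\to K\beta$ as $t\to 0$), and is therefore dominated by the positive $1/(t L^{\beta+2})$ contribution for $t$ small, so $W$ is a genuine supersolution of $-\Delta-H$ on $U_\delta$ after possibly shrinking $\delta$. Choosing $K$ large enough that $W\ge\|w\|_{L^\infty}$ on $\{d_{\partial\Omega}=\delta\}$, and noting $W\ge 0=w$ on $\partial\Omega$, comparison gives $w\le W\le K\beta\,d_{\partial\Omega}$ on $U_\delta$.

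The main technical obstacle will be justifying the comparison principle for $-\Delta-H$ in the domain setting: one must verify that the differences $w-v$ and $W-w$ satisfy the weighted $L^2$ integrability required for \eqref{H-d} to absorb the $-H$ term, exactly as in the integrability hypothesis of Lemma~\ref{lemmaComparison}. This is routine once the explicit asymptotics of $w$, $v$ and $W$ near $\partial\Omega$ are in hand, but it is the technical heart of the argument; the algebraic cancellation in the supersolution computation and the Hopf-type nondegeneracy of the subsolution then furnish the matching constants $c$ and $C$.
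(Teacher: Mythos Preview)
Your proposal is correct and, for the upper bound, essentially identical to the paper's argument: the paper too builds the supersolution $W(x)=J(d_{\partial\Omega}(x))$ with $J(t)=t\bigl(\beta-(\log\tfrac{1}{t})^{-\beta}\bigr)$, checks that $-J''-J/(t^2(\log\tfrac{1}{t})^{1+\beta})$ dominates the bounded curvature term $K\abs{J'}$ (where $K=\Norm{\Delta d_{\partial\Omega}}_{L^\infty}$), and concludes by comparison on a thin collar.

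The only genuine methodological difference is in the lower bound. You build an explicit subsolution $v=c\,d_{\partial\Omega}$, using that $Hv$ blows up near $\partial\Omega$ while $\Delta v$ stays bounded; the paper instead argues more cheaply: comparison gives $w\ge 0$, hence $-\Delta w=Hw\ge 0$, so $w$ is superharmonic in $\Omega\setminus\overline{\Lambda}$ with $w=0$ on $\partial\Omega$, and the bound $w\ge c\,d_{\partial\Omega}$ follows from the Hopf lemma. Your route has the advantage of being self-contained within the comparison framework already set up (no separate boundary-point lemma), while the paper's route avoids having to re-enter the comparison machinery a second time and sidesteps the weighted-$L^2$ integrability check you flag. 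Both are short and correct.
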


\begin{proof}
First set $U_\delta=\{ x \in \Omega : d_{\partial \Omega}(x) < \delta\}$. There exists $\delta' > 0$ such that $d_{\partial \Omega}  \in C^2(\Bar{U}_{\delta'})$. One has then $\abs{\nabla d_{\partial \Omega}}=1$, and
\[
 K=\Norm{\Delta d_{\partial \Omega}}_{L^\infty(\Bar{U}_\delta)} < \infty.
\]
Define now, for $t \ge 0$,
\[
  J(t)=t\Big(\beta-\frac{1}{\bigl(\log \frac{1}{t}\bigr)^\beta}\Big).
\]
A direct computation shows that, there exists $\delta > 0$ such that, $\delta < \delta'$, and for $t \in (0, \delta)$,
\[
 -J''(t)-\frac{J(t)}{t^2\Bigl(\log \frac{1}{t}\Bigr)^{\beta+1}} \ge K \abs{J'(t)}.
\]
One can thus define $W \in C^2(\Bar{U}_{\delta'})$ by
\[
 W(x)=J\big(d_{\partial \Omega}(x)\big).
\]
One has
\[
  -\Delta W -HW=-J'(d_{\partial \Omega}) \Delta d_{\partial \Omega}-J''(d_{\partial \Omega}) \abs{\nabla d_{\partial \Omega}}^2-H J(d_{\partial \Omega}) \ge 0.
\]
Now, one can take $\mu \in \R$ such that $\mu W \ge w$ on $\Omega \cap \partial U_\delta$. By the comparison principle which follows from
\eqref{H-d}, one has $w(x) \le \mu W(x) \le \mu \beta d(x, \partial \Omega)$.

For the lower bound, the same comparison principle implies that $w\ge 0$.
Hence $-\Delta w\ge 0$ in $\Omega\setminus\overline\Lambda$ and the conclusion follows.
\end{proof}

\subsection{Slowly decaying potentials revisited.}
Although our results above are valid for all nonnegative potentials $V$,
they become sharp only in the case of compactly supported or fast decaying potentials, see discussion after Theorems~\ref{T-main} and Theorem~\ref{T-main-2}.
However, the modifications to the penalization method of \cite{BVS} that are made in this paper allow us
to improve the concentration results of \cite{BVS} for slowly decaying potentials as well.
\begin{theorem}\label{t-main-slow}
Let $N\ge 2$ and $1 < p < \frac{N+2}{N-2}$.
Assume that there exists $\alpha <2$, $\sigma>0$, $m > 0$ and $M > 0$ such that $V, K \in C(\R^N,\R^+)$ satisfy, for all $x\in\R^N$,
\begin{gather*}
 V(x)\ge m(1+\abs{x})^{-\alpha}, \\
 K(x)\le M\exp\big(\sigma \abs{x}^{1-\alpha/2}\big).
\end{gather*}
Assume that there exists a smooth bounded open set $\Lambda\subset\R^N$ such that
\[0<\inf_{x\in\Lambda}\A(x)<\inf_{x\in\partial\Lambda}\A(x).\]
Then there exists $\eps_0>0$ such that for every $0<\eps<\eps_0$,
equation \eqref{Peps} has at least one positive solution $u_\eps\in \D^1_0(\R^N)$.
Moreover, $u_\eps$ attains its maximum at $x_\eps\in\Lambda$,
\begin{gather*}
  \liminf_{\eps \to 0} u_\eps(x_\eps) > 0, \\
  \lim_{\eps\to 0}\mathcal{A}(x_\eps)=\inf_{x\in\Lambda}\mathcal{A}(x),
\end{gather*}
and there exist $C, \lambda>0$ such that
\[
  u_\eps(x) \le C \exp \Big(-\frac{\lambda}{\eps} \frac{\abs{x-x_\eps}}{(1+\abs{x-x_\eps})^{\alpha/2}}\Big), \qquad x\in\R^N.
\]
\end{theorem}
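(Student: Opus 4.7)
The plan is to replay the three-part scheme of Theorem~\ref{T-main}—penalization with Mountain Pass, concentration analysis of the resulting least energy solutions, removal of the penalization via a barrier—adjusting only those pieces that are sensitive to the behavior of $V$ at infinity. The key observation is that, since $V(x)\ge m(1+\abs{x})^{-\alpha}$ with $\alpha<2$, the potential $V$ itself (rather than a logarithmic Hardy potential as in \eqref{H}) is large enough to drive both the Hardy-type inequality behind the comparison principle and the Agmon-type barrier that governs the decay of $u_\eps$.

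For the penalization, I would take
\[
  H(x):=\frac{\kappa(1-\chi_\Lambda(x))}{(1+\abs{x})^{\alpha}},
\]
with $\kappa<m$, so that $\eps^2H\le(\kappa\eps^2/m)V$ on $\Lambda^c$ and the quadratic form $\int\eps^2\abs{\nabla u}^2+(V-\eps^2H)\abs{u}^2$ is coercive. The comparison principle analogue of Lemma~\ref{lemmaComparison} follows immediately. Defining $g_\eps$ as in \eqref{nonlin} and the associated functional, Proposition~\ref{MPL}, Lemma~\ref{l-delta}, Lemma~\ref{lemmaUpper}, the concentration Lemma~\ref{lemmaConcentration}, Lemma~\ref{Prop-A}, and the rescaling Lemma~\ref{lemma-conv} transfer essentially verbatim, since all the relevant arguments either take place inside $\Lambda$ or rely only on the bound $\eps^2H\ll V$. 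This yields a family of least energy solutions $u_\eps$ with maxima $x_\eps\in\Lambda$ satisfying $\liminf_\eps u_\eps(x_\eps)>0$ and $\A(x_\eps)\to\inf_\Lambda\A$.

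The main new step is the barrier construction. Fixing $\nu\in(0,1)$, I would use the cosh profile of Lemma~\ref{l-barrier} on a small ball around $x_\eps$ and, outside, set $W_\eps(x):=\exp(-\phi(x-x_\eps)/\eps)$ with
\[
  \phi(y):=\int_0^{\abs{y}}\sqrt{(1-\nu)m}\,(1+s)^{-\alpha/2}\,ds,
\]
so that an elementary estimate gives $\phi(y)\gtrsim\abs{y}/(1+\abs{y})^{\alpha/2}$, while a direct computation produces $\abs{\nabla\phi}^2\le(1-\nu)V$ and $\Delta\phi\ge 0$. Consequently $W_\eps$ is a supersolution of $-\eps^2\Delta-\eps^2H+(1-\nu)V$ on $\Lambda^c$, the correction from the penalization term being absorbed by the strict inequality $\eps^2H\le(\kappa\eps^2/m)V$. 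After gluing the two profiles to obtain a global family of barriers in $\R^N\setminus B(x_\eps,\eps R)$, the analogue of Lemma~\ref{Prop-36} and the comparison principle yield the claimed exponential estimate on $u_\eps$.

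Finally, to check that $u_\eps$ solves the original problem I would verify that on $\Lambda^c$,
\[
  K(x)u_\eps(x)^{p-1}\le CM\exp\Big(\sigma\abs{x}^{1-\alpha/2}-\tfrac{\lambda(p-1)}{\eps}\tfrac{\abs{x-x_\eps}}{(1+\abs{x-x_\eps})^{\alpha/2}}\Big)\le \eps^2H(x),
\]
using $\abs{x}^{1-\alpha/2}\le C(\abs{x_\eps}^{1-\alpha/2}+\abs{x-x_\eps}^{1-\alpha/2})$ together with the comparability of $\abs{x-x_\eps}/(1+\abs{x-x_\eps})^{\alpha/2}$ with $\abs{x-x_\eps}^{1-\alpha/2}$ at infinity; for $\eps$ small enough the exponential factor in $1/\eps$ beats any fixed $\sigma$. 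The main obstacle is the barrier step: one must engineer $\phi$ so that the Agmon lower bound $\phi(y)\gtrsim\abs{y}/(1+\abs{y})^{\alpha/2}$, the bound $\abs{\nabla\phi}^2\le(1-\nu)V$, and a sign condition on $\Delta\phi$ simultaneously hold and survive the gluing to the inner $\cosh$ profile, since it is exactly this balance that matches the exponent in the statement and makes the condition on $\sigma$ innocuous.
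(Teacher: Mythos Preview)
Your proposal is correct and follows essentially the same route as the paper: both keep the penalization/concentration analysis of Sections~\ref{s-Penalization}--\ref{s-Energy} intact and obtain the stated decay via an Agmon-type barrier $\exp(-\phi/\eps)$ with $\phi(y)\sim\abs{y}^{1-\alpha/2}$ at infinity, glued to the inner $\cosh$ profile. The only inessential differences are that the paper keeps the Hardy penalization potential $H$ of \eqref{H} rather than switching to your $H\sim(1+\abs{x})^{-\alpha}$, and writes the outer barrier directly as $\exp\bigl(\tfrac{\lambda}{\eps}(r^{1-\alpha/2}-\abs{x-x_\eps}^{1-\alpha/2})\bigr)$ instead of through the integral $\phi$.
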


The essential improvement in this result comparing to \cite{BVS} is that now, for every fixed $x \in \R^N$, one has the optimal exponential decay rate as $\eps \to 0$.
A similar decay estimate was already obtained in \cite[Lemma 22]{AFM}.
Also note that Theorem \ref{t-main-slow} includes concentration when $\alpha < 0$; this case was not specifically addressed in \cite{AFM, BVS}.

To prove Theorem \ref{t-main-slow}, one proceeds as in the proof of Theorem~\ref{T-main},
modifying appropriately the construction of barrier functions by taking, in the proof of Lemma~\ref{l-barrier},
\[
  w_\eps(x)=\begin{cases}
              \gamma_\eps(x-x_\eps) & \text{if $\abs{x-x_\eps} \le r$}, \\
              \exp\big(\frac{\lambda}{\eps} (r^{1-\frac{\alpha}{2}}-\abs{x-x_\eps}^{1-\frac{\alpha}{2}})\big) & \text{if $\abs{x-x_\eps} > r$}.
            \end{cases}
\]

In the borderline case $\alpha=2$, one obtains the following.

\begin{theorem}
Let $N\ge 2$, $1 < p < \frac{N+2}{N-2}$.
Assume that there exists $\sigma>0$, $m > 0$ and $M > 0$ such that $V, K \in C(\R^N,\R^+)$ satisfy, for all $x\in\R^N$,
\begin{gather*}
 V(x)\ge m(1+\abs{x})^{-2}\\
 K(x)\le M(\abs{x}+1)^{\sigma}.
\end{gather*}
Assume that there exists a smooth bounded open set $\Lambda\subset\R^N$ such that
\[0<\inf_{x\in\Lambda}\A(x)<\inf_{x\in\partial\Lambda}\A(x).\]
Then  $\eps_0>0$ such that for every $0<\eps<\eps_0$,
equation \eqref{Peps} has at least one positive solution $u_\eps\in\D^1_0(\R^N)$.
Moreover, $u_\eps$ attains its maximum at $x_\eps\in\Lambda$,
\begin{gather*}
  \liminf_{\eps \to 0} u_\eps(x_\eps) > 0, \\
  \lim_{\eps\to 0}\mathcal{A}(x_\eps)=\inf_{x\in\Lambda}\mathcal{A}(x),
\end{gather*}
and there exists $C, \lambda, \nu >0 $ such that
\[
 u_\eps(x) \le C \exp \Big(-\frac{\lambda}{\eps} \frac{\abs{x-x_\eps}}{1+\abs{x-x_\eps})}\Big)\big(1+\abs{x-x_\eps}\big)^{-\frac{\nu}{\eps}}, \qquad x\in\R^N.
\]
\end{theorem}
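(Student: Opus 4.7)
\medskip\noindent\textbf{Proof proposal.}
The plan is to mimic the proof of Theorem~\ref{T-main} in the organizational structure given in Section~\ref{s-Ext} for Theorem~\ref{t-main-slow}, with the key new input being a family of barrier functions tailored to the borderline scaling $V(x)\gtrsim(1+\abs{x})^{-2}$. First, I would introduce the penalization potential $H$ of \eqref{H} (the fast-decay choice, including the logarithmic correction), verify that the Hardy inequality of Lemma~\ref{lemmaQuadratic} holds, and run the arguments of Sections~\ref{s-Penalization} and~\ref{s-Energy} unchanged: the penalized functional $\J_\eps$ has the mountain-pass geometry, satisfies the Palais--Smale condition, and therefore admits least-energy critical points $u_\eps$. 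The energy upper bound (Lemma~\ref{lemmaUpper}), the non-vanishing property (Lemma~\ref{l-delta}), the concentration statement (Lemma~\ref{Prop-A}), and the rescaled-convergence lemma (Lemma~\ref{lemma-conv}) all go through verbatim, using only that $V>0$ on $\overline{\Lambda}$.

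The essential new work is the construction of barriers in Lemma~\ref{l-barrier} adapted to the borderline case. Inside $B(x_\eps,r)$ I would keep the cosh barrier $\gamma_\eps$ of \eqref{eqgamma}; outside, the choice $\exp\bigl(\frac{\lambda}{\eps}(r^{1-\alpha/2}-\abs{x-x_\eps}^{1-\alpha/2})\bigr)$ used in Theorem~\ref{t-main-slow} degenerates when $\alpha=2$, so I replace it by the logarithmic analogue
\[
  w_\eps(x)=\Bigl(\tfrac{1+r}{1+\abs{x-x_\eps}}\Bigr)^{\nu/\eps}
  \qquad\text{when }\abs{x-x_\eps}>r.
\]
A direct computation gives, for $\abs{x-x_\eps}=t>r$,
\[
  -\eps^2\Delta w_\eps+(1-\nu')V\,w_\eps\ge
  \frac{w_\eps(x)}{(1+t)^2}\bigl(-\nu^2-\eps\nu(N-1)+(1-\nu')m\bigr),
\]
which is nonnegative provided $\nu<\sqrt{(1-\nu')m}$ and $\eps$ is sufficiently small; this is the crucial place where the assumption $V(x)\ge m(1+\abs{x})^{-2}$ is used. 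After renormalizing by $\cosh(\mu r/\eps)$ to match the boundary condition on $\partial B(x_\eps,\eps R)$, the factor $1/\cosh(\mu r/\eps)\sim e^{-\mu r/\eps}$ supplies exactly the multiplicative term $\exp\bigl(-\frac{\lambda}{\eps}\frac{\abs{x-x_\eps}}{1+\abs{x-x_\eps}}\bigr)$ appearing in the stated decay estimate. The matching of the interior and exterior pieces is only Lipschitz across $\partial B(x_\eps,r)$, but a standard convexity argument (the minimum of two supersolutions is a supersolution in the weak sense) shows that $w_\eps$ still satisfies the required inequation weakly in $\R^N\setminus\{x_\eps\}$.

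The main obstacle I anticipate is the correct tuning of the three parameters $\kappa$, $\nu$, $\nu'$ in the penalization and the barrier: $\kappa$ must be small enough for the Hardy inequality \eqref{kappa-form} and for the comparison principle of Lemma~\ref{lemmaComparison}; $\nu'\in(0,1)$ must be chosen so that Lemma~\ref{Prop-36} still produces the linear inequation outside a small ball; and $\nu$ must then satisfy $\nu<\sqrt{(1-\nu')m}$ so that the barrier is a genuine supersolution. Once these choices are coordinated, Proposition~\ref{P-barrier} applies with $W_\eps=w_\eps/\cosh(\mu r/\eps)$ and yields the claimed pointwise bound on $u_\eps$.

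Finally, to close the argument one must verify that $u_\eps$ solves the original equation \eqref{Peps}, i.e.\ that the penalization is inactive for small $\eps$. Using the decay bound just obtained together with the growth hypothesis $K(x)\le M(1+\abs{x})^{\sigma}$, on $\Lambda^c$ one estimates
\[
 K(x)u_\eps(x)^{p-1}\le CM\,e^{-\frac{(p-1)\lambda}{\eps}}(1+\abs{x})^{\sigma-(p-1)\nu/\eps}.
\]
Since for any fixed $\sigma$ one has $(p-1)\nu/\eps>\sigma+2$ once $\eps$ is small enough, and the exponential prefactor dominates any power of $\eps^{-1}$, this is bounded above by $\eps^2 H(x)$ on $\Lambda^c$. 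Therefore $g_\eps(x,u_\eps(x))=K(x)u_\eps(x)^p$, so $u_\eps$ solves \eqref{Peps}. The remaining concentration assertions ($x_\eps\in\Lambda$, $\liminf u_\eps(x_\eps)>0$, and $\A(x_\eps)\to\inf_\Lambda\A$) follow from Lemmas~\ref{l-delta}, \ref{Prop-A} and~\ref{lemma-conv} exactly as in the proof of Theorem~\ref{T-main}.
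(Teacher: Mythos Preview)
Your approach is essentially the paper's own: the paper likewise reduces the borderline case to the proof of Theorem~\ref{T-main}, replacing only the outer piece of the barrier by the power profile
\[
  w_\eps(x)=\Bigl(\frac{r}{\abs{x-x_\eps}}\Bigr)^{\nu/\eps}\qquad\text{for }\abs{x-x_\eps}>r,
\]
which is the same object as your $\bigl((1+r)/(1+\abs{x-x_\eps})\bigr)^{\nu/\eps}$ up to harmless constants. One minor point to clean up: in your displayed lower bound the $\eps\nu(N-1)$ contribution from the radial Laplacian actually enters with a \emph{positive} sign (and with a factor $(1+t)/t$), and the potential term involves $(1+\abs{x})^{-2}$ rather than $(1+t)^{-2}$; both are easily absorbed since $x_\eps\in\Lambda$ is bounded and you only need the leading balance $\nu^2<(1-\nu')m$, so the conclusion stands.
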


Again, for every fixed $x \in \R^N$, one has the optimal exponential decay rate as $\eps \to 0$. In the proof, one now takes
\[
  w_\eps(x)=\begin{cases}
              \gamma_\eps(x-x_\eps) & \text{if $\abs{x-x_\eps} \le r$}, \\
              \Bigl(\dfrac{r}{\abs{x-x_\eps}}\Bigr)^{\frac{\nu}{\eps}} & \text{if $\abs{x-x_\eps} > r$}.
            \end{cases}
\]

\subsection{More general nonlinearities.}
All statements and proofs given in this paper could be extended to the equation
\[
 -\eps^2 \Delta u +Vu=Kf(u),
\]
where $f\in C(\R)$ satisfies the assumptions of \cite{BVS}, i.e.,
\begin{enumerate}
 \item[$(f_1)$] there exists $q \in (1, \frac{N+2}{N-2})$ such that $f(s)=O(s^q)$ as  $s\to 0^+$;
 \item[$(f_2)$] there exists $p\in(1, \frac{N+2}{N-2})$ such that $f(s)=O(s^p)$ as $s\to\infty$;
 \item[$(f_3)$] there exists $\theta > 2$ such that
 \[0 < \theta F(s) \le s f(s), \quad\text{for every $s>0$}, \]
 where $F(u):=\int_0^u f(s)\, ds$;
 \item[$(f_4)$] the function $s \mapsto f(s)/s$ is nondecreasing for all $s \ge 0$.
\end{enumerate}
The penalization potential $\H:\R^N\to\R$ could be then chosen as before,
the truncated nonlinearity $g_\eps:\R^N\times\R^+\to\R$ should be defined as
$$
g_\eps(x, s):=\chi_\Lambda(x)K(x)f(s)+\min\big(\eps^2 H(x)s, \, K(x)f(s)\big),
$$
while the function $\A(x)$ is to be replaced by the concentration function ${\mathcal C}(x)$, as introduced in \cite{BVS}.
The condition on $\sigma$ becomes $\sigma < (N-2)q -N$.
We leave the details to the interested reader.

\section*{Acknowledgements}

JVS was supported by the SPECT programme of ESF (European Science Foundation), the Fonds de la Recherche scientifique--FNRS, the Fonds sp\'eciaux e Recherche (Universit\'e Catholique de Louvain) and by the British Council Partnership Programme in Science (British Council/CGRI-DRI/FNRS). The authors thank Jonathan Di Cosmo for discussions.

\end{document}